\title{Isogenies of elliptic curves over function fields}
\author{{\Large Richard {\sc Griffon}} \and {\Large Fabien {\sc Pazuki}}}	
\date{}
\titleformat{\subsection}[runin]{\bfseries}{\thesubsection.}{0.2em}{}[.\hspace{0.4em}-- ]        
\titleformat{\section}{\center\Large\bfseries}{\thesection.}{0.2em}{}[]
\definecolor{BrickRed}{RGB}{153, 0, 0}
	\newtheorem{theo}{Theorem}[section]
	\newtheorem{coro}[theo]{Corollary}
	\newtheorem{lemm}[theo]{Lemma}
	\newtheorem{prop}[theo]{Proposition}
	{\theoremstyle{definition}
	\newtheorem{defi}[theo]{Definition}
	
	\newtheorem{rema}[theo]{Remark}}	
	{\newtheorem{itheo}{Theorem}}
\newtheorem{icoro}[itheo]{Corollary}}
			\DeclareMathOperator{\Gal}{Gal}
			\DeclareMathOperator{\Hom}{Hom}			
			\DeclareMathOperator{\Div}{Div}
			\DeclareMathOperator{\DIVIS}{div}
			\renewcommand{\div}{\DIVIS}			
      		\DeclareMathOperator{\Spec}{Spec}
			\DeclareMathOperator{\DEGRE}{deg }
			\renewcommand{\deg}{\DEGRE}
			\newcommand{\R}{\ensuremath{\mathbb{R}}}
			\newcommand{\Q}{\ensuremath{\mathbb{Q}}}
			\newcommand{\C}{\ensuremath{\mathbb{C}}}
			\newcommand{\Z}{\ensuremath{\mathbb{Z}}}
			\newcommand{\G}{\ensuremath{\mathbb{G}}}
			\renewcommand{\P}{\ensuremath{\mathbb{P}}}
			\newcommand{\Ecal}{\mathcal{E}}
			\newcommand{\Acal}{\mathcal{A}}			
			\newcommand{\e}{\ensuremath{\mathrm{e}}}
			\renewcommand{\bar}[1]{\ensuremath{\overline{#1}}}
			\renewcommand{\hat}[1]{\ensuremath{\widehat{#1}}}
			\newcommand{\cond}{\mathcal{N}}
			\newcommand{\sep}{^\mathrm{sep}}
			\newcommand{\ie}{\textit{i.e.}{}}
			\newcommand{\Frob}[1]{\mathrm{Fr}_{#1}}
			\newcommand{\Vers}[1]{\mathrm{V}_{\!#1}}
			\newcommand{\SL}{\mathrm{SL}}
            \newcommand{\hmod}{\mathrm{h}_{\mathsf{mod}}}
            \newcommand{\hstab}{\mathrm{h}_{\mathsf{st}}}
            \newcommand{\hdiff}{\mathrm{h}_{\mathsf{diff}}}
            \newcommand{\degins}{\mathrm{deg}_{\mathrm{ins}}}
            \newcommand{\degsep}{\mathrm{deg}_{\mathrm{sep}}}
			\newcommand{\Deltamin}{\Delta_{\min}}
			\renewcommand{\O}{\mathcal{O}}
			\newcommand{\Escr}{\mathscr{E}}
			\newcommand{\Iscr}{\mathscr{I}}
			\newcommand{\Jscr}{\mathcal{J}}			
			\newcommand{\E}{\mathbb{E}}
			\renewcommand{\hat}{\widehat}			  
			\newcommand{\Ycal}{\mathcal{Y}}
			\newcommand{\Xcal}{\mathcal{X}}
			\newcommand{\Hcal}{\mathcal{H}}
			\newcommand{\cO}{49}
			\newcommand{\cycsbgp}{\mathscr{C}}
\begin{document}
\pagestyle{plain}
%%%%%%%%%%%%%%%%%%%%%%%%%%%%%%%%%%%%%%%%%%%%%%%%%%%%%
\maketitle 

\noindent\hfill\rule{7cm}{0.5pt}\hfill\phantom{.}

\paragraph{Abstract -- } 
We prove two theorems concerning isogenies of elliptic curves over function fields. 
The first one describes the variation of the height of the $j$-invariant in an isogeny class. 
The second one is an ``isogeny estimate'', providing an explicit bound on the degree of a minimal isogeny between two isogenous elliptic curves. 
We also give several corollaries of these two results. 

\medskip
\noindent{\it Keywords:}  
Elliptic curves over function fields, 
Heights, 
Isogenies, 
Isogeny estimates. 
 
\smallskip
\noindent{\it 2020 Math. Subj. Classification:} 
11G05, %Elliptic curves over global fields
11G35, %Varieties over global fields
11G50, %Heights
11R58, %Arithmetic theory of algebraic function fields
14G17, %Positive characteristic ground fields
14G25, %Global ground fields
14G40, %Arithmetic varieties and schemes; Arakelov theory; heights
14H52, %Elliptic curves
14K02. %Isogeny

\noindent\hfill\rule{7cm}{0.5pt}\hfill\phantom{.}

%%%%%%%%%%%%%%%%%%%%%%%%%%%%%%%%%%%%%%%%%%%%%%%%%%%%%%%%%%%%%%%%%%%%%%%%%%%%%%%%%%%%%%%%%%%%%%%%%%%%%%%%%%%%%%%%%%%%%%%%%%%%%%%%%%%%%%%%%%%%%%%%%%%%%%%%
%%%%%%%%%%%%%%%%%%%%%%%%%%%%%%%%%%%%%%%%%%%%%%%%%%%%%%%%%%%%%%%%%%%%%%%%%%%%%%%%%%%%%%%%%%%%%%%%%%%%%%%%%%%%%%%%%%%%%%%%%%%%%%%%%%%%%%%%%%%%%%%%%%%%%%%%
\section*{Introduction}
\pdfbookmark[0]{Introduction}{Introduction}
\addcontentsline{toc}{section}{Introduction}\setcounter{section}{0}
 
Let $k$ be a perfect field, and $C$ be a smooth projective and geometrically irreducible curve over $k$. 
We write $K:=k(C)$ for the function field of $C$, and fix an algebraic closure $\bar{K}$ of $K$.

It is well-known that two elliptic curves defined over $\bar{K}$ are isomorphic 
if and only if they have the same $j$-invariant. 
One may wonder, more generally, about arithmetic relations between 
the $j$-invariants of two elliptic curves which are linked by an isogeny of arbitrary degree.

Denoting the Weil height on $\bar{K}$ by  ${h}(.)$, we prove the following.
		\begin{itheo}\label{itheo:comp.hmod}
		Let $E_1$, $E_2$ be two non-isotrivial elliptic curves defined over $\bar{K}$ 
		with respective $j$-invariants $j(E_1)$, $j(E_2)$. 
		Assume that there exists an isogeny $\varphi:E_1\to E_2$, and denote its dual by $\hat{\varphi}:E_2\to E_1$. 

		We have
		\begin{equation}\label{6}
		 {h}(j(E_{2})) = \frac{\degins(\varphi)}{\degins(\hat{\varphi})} \, {h}(j(E_{1})).
		\end{equation}
		Here $\degins(\varphi)$ and $\degins(\hat{\varphi})$ denote the inseparability degrees 
		of $\varphi$ and $\hat{\varphi}$ (see \S\ref{ss:isogenies.prelim} for the  definition);
		if~$K$ has characteristic $0$, these should be interpreted as $1$.
		\end{itheo}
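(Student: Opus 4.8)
The plan is to exploit the relation between the height of the $j$-invariant and a more geometric invariant — the minimal discriminant (or equivalently, the degree of the conductor, or the "modular height"/stable Faltings-type height) — and to track how these behave under isogeny. Over function fields, for a non-isotrivial elliptic curve $E/\bar K$, one has a clean formula of the shape $h(j(E)) = \deg(\mathfrak{j}_E)$, where $\mathfrak{j}_E\colon C'\to \P^1$ is the morphism induced by the $j$-map on a model of $E$ over a suitable finite cover $C'$ of $C$; more usefully, $h(j(E))$ is controlled by the degree of the minimal discriminant divisor $\Deltamin(E)$. So the first step is to reduce \eqref{6} to a statement comparing $\deg\Deltamin(E_1)$ and $\deg\Deltamin(E_2)$ under an isogeny $\varphi\colon E_1\to E_2$.

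**Next**, I would invoke the theory of Néron models and the behaviour of the minimal discriminant under isogeny. The key input is that an isogeny $\varphi$ induces a map on Néron models, and hence on the invariant differentials / Lie algebras of the Néron models; the "defect" is measured precisely by $\degins(\varphi)$ in positive characteristic (in characteristic $0$ an isogeny is étale on the generic fibre and the comparison of differentials is governed only by places of bad reduction, which — and this is the crucial cancellation — contribute symmetrically to $E_1$ and $E_2$). Concretely, writing $\omega_i$ for a Néron differential of $E_i$, one has $\varphi^*\omega_2 = c\cdot\omega_1$ for some $c\in K^\times$, and $\div(c)$ relates $\Deltamin(E_1)$ and $\Deltamin(E_2)$. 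Composing with $\hat\varphi$ and using $\hat\varphi\circ\varphi = [\deg\varphi]$ together with the known action of $[n]$ on differentials gives a relation $\degins(\hat\varphi)\cdot\deg\Deltamin(E_2) = \degins(\varphi)\cdot\deg\Deltamin(E_1)$ up to contributions supported at bad places — which must be shown to cancel. Combined with Step 1, this yields \eqref{6}.

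**The main obstacle** I expect is precisely this last cancellation: controlling the local contributions at places of bad reduction (including, in positive characteristic, places of additive reduction and wild ramification in the isogeny). One must show that the "conductor-type" terms appearing when comparing $\deg\Deltamin(E_i)$ to $h(j(E_i))$, together with the local valuations of the constant $c$ with $\varphi^*\omega_2=c\,\omega_1$, assemble into an identity with no error term — not merely an inequality. The clean way is to work place-by-place with the formula $v(\Deltamin(E)) = v(j(E))_{-} + (\text{conductor exponent}) + 12\cdot(\text{something})$ relating $v(j)$, $v(\Delta)$ and the local behaviour of the Néron model, and to check that under $\varphi$ the conductor exponents agree on both sides (isogenous curves have the same conductor) while the "unstable" parts are exactly absorbed by $\degins$. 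An alternative, possibly cleaner, route: pass to a finite extension $K'/K$ over which both $E_1$ and $E_2$ acquire semistable reduction — there $h(j(E_i))$ is, up to the fixed degree $[K':K]$, equal to $\deg\Deltamin(E_i/K')$ with no conductor correction, and the isogeny comparison becomes a purely semistable statement where the Néron models are semiabelian and $\varphi$ extends to a map of semiabelian schemes whose kernel is a quasi-finite flat group scheme of order $\deg\varphi$; the separable/inseparable parts of this group scheme give exactly the ratio $\degins(\varphi)/\degins(\hat\varphi)$. I would pursue this semistable reduction approach, then descend, noting $h(j)$ and $\degins$ are insensitive to the field extension.
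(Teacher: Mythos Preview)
Your alternative route via semistable reduction is essentially the paper's strategy, and the N\'eron-differential comparison you sketch is precisely Parshin's theorem (Theorem~\ref{theo:isog.preserve.hdiff}): a \emph{biseparable} isogeny (degree coprime to~$p$) extends to an \'etale map on N\'eron models, so $\varphi^*\omega_2$ and $\omega_1$ differ locally by a unit and the differential heights coincide. Combined with $\hmod = 12\,\hdiff$ in the semistable case (Corollary~\ref{coro:comp.hdiff.hmod.semistable}), this settles the biseparable part and hence all of characteristic~$0$.

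The genuine gap is in positive characteristic when $\varphi$ is not biseparable. Your relation $\varphi^*\omega_2 = c\,\omega_1$ with $c\in K^\times$ fails: if $\varphi$ is inseparable then $\varphi^*$ annihilates differentials, so $c=0$ and $\div(c)$ carries no information. Composing with $\hat\varphi$ does not help either, since $[\deg\varphi]^*\omega = (\deg\varphi)\,\omega$ also vanishes when $p\mid\deg\varphi$. Your fallback appeal to ``the separable/inseparable parts of the kernel group scheme giving exactly $\degins\varphi/\degins\hat\varphi$'' is not justified: the kernel of~$\varphi$ alone does not see $\degins\hat\varphi$, and even in the semistable setting it is unclear how this group-scheme data produces the exact identity on discriminant degrees with no error term.

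The paper bypasses the inseparable case by first proving a decomposition (Proposition~\ref{prop:decomposition.isogeny}): every isogeny factors as
\[
E_1 \xrightarrow{\ \Frob{p^e}\ } E_1^{(p^e)} \xrightarrow{\ \psi\ } E_2^{(p^f)} \xrightarrow{\ \Vers{p^f}\ } E_2,
\]
with $\psi$ biseparable, $p^e=\degins\varphi$, $p^f=\degins\hat\varphi$. The middle piece $\psi$ is handled by Parshin as above. The Frobenius and Verschiebung pieces are handled not via differentials at all, but by the elementary identity $j(E^{(q)})=j(E)^q$, which gives $\hmod(E^{(q)})=q\,\hmod(E)$ directly and immediately yields the factor $p^e/p^f = \degins\varphi/\degins\hat\varphi$. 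The missing ingredient in your sketch is precisely this explicit Frobenius--biseparable--Verschiebung factorisation, which replaces the attempt to push differentials through an inseparable map.
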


In the particular case where the field $K$ has characteristic $0$, the above result states that 
\emph{isogenies preserve the height of the $j$-invariant}.

Theorem \ref{itheo:comp.hmod} should be compared to its analogue for elliptic curves over number fields: 
inspired by classical work of Faltings and Silverman,
Theorem~1.1 in \cite{Paz18} asserts that 
the $j$-invariants of two elliptic curves $E_1, E_2$ defined over $\bar{\Q}$  which are linked by an isogeny $\varphi$ satisfy: 
\begin{equation}\label{eq1}
\left|ht(j(E_1)) - ht(j(E_2))\right| \leq 9.204 + 12 \log\deg\varphi,
\end{equation}
where $ht(.)$ is the logarithmic Weil height on $\bar{\Q}$. 
Results of Szpiro--Ullmo \cite{SzpiroUllmo} imply that the displayed upper bound is almost optimal -- 
in the sense that the Weil heights of $j$-invariants of elliptic curves related by an isogeny can actually differ 
by a quantity of the same order of magnitude as a multiple of $\log\deg\varphi$. 
This latter statement in turn implies that, given an elliptic curve $E/\bar{\Q}$ without CM, the set
\[ \Jscr(E, B) 
:= \left\{ j(E')\in\bar{\Q} : E'/\bar{\Q} \text{ is isogenous to $E$, and }ht(j(E'))\leq B\right\}\]
is finite for all $B\geq 1$.
Theorem~\ref{itheo:comp.hmod} provides a much tighter control on the variation of the Weil height of 
the $j$-invariant in isogeny classes of elliptic curves over function fields than \eqref{eq1} does  for elliptic curves over number fields. 
Which leads to the surprising consequence that the sets which are natural analogues 
for~$\Jscr(E,B)$ are infinite in the function field setting (see Proposition~\ref{prop:infinite})! 

After introducing the relevant objects in sections \ref{sec:intro.FF}--\ref{sec:heights.EC} and
proving some preliminary results about isogenies in section \ref{sec:isogenies},
we show Theorem \ref{itheo:comp.hmod} in section \ref{sec:isog.heights}. 
Our proof relies on a comparison of modular and differential heights of elliptic curves, 
and on a result of Parshin stating that the differential height is invariant under isogeny of degree coprime to the characteristic.
As far as we know (see Remark \ref{rema:alternative.approach}), this is the most direct approach. 
 
We refer to the recent \cite{BPR21} for an analogue of Theorem \ref{itheo:comp.hmod} for isogenous Drinfeld modules. 

\paragraph{}
Let us now turn to the second main theorem of this article, 
which is an isogeny estimate for elliptic curves over function fields.
The result (Theorem \ref{theo.small.isogenies}) may be stated as follows:

		\begin{itheo}\label{itheo:min.isog}	
		Let $E_1$ and $E_2$ be two non-isotrivial elliptic curves defined over a function field $K$ of genus~$g$, 
		with respective $j$-invariants $j(E_1)$, $j(E_2)$. 
		Assume that $E_1$ and $E_2$ are isogenous. 
		Then there exists a $\bar{K}$-isogeny $\varphi_0:E_1\to E_2$ with
		\[ \deg\varphi_0 
		\leq 49 \max\{1, g\}\, \max\left\{\frac{\degins j(E_1)}{\degins j(E_2)},\frac{\degins j(E_2)}{\degins j(E_1)}\right\},\]
		where $\degins j(E_1), \degins j(E_2) $ are the inseparability degrees of 
		$j(E_1), j(E_2)$ respectively (see \S\ref{ssec:Insepdeg} for the definition); 
		if~$K$ has characteristic $0$, they should be interpreted as $1$.
		\end{itheo}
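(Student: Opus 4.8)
The plan is to separate, in any given isogeny, its inseparable part — which will contribute exactly the factor involving the inseparability degrees of the $j$-invariants — from its separable part, whose degree I will bound by a geometric argument on modular curves. In characteristic $0$ only the second ingredient is needed, and the bound to prove reads $\deg\varphi_0\leq 49\max\{1,g\}$.

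\emph{The separable core.} First I treat isogenies of degree prime to $p:=\operatorname{char}K$ (vacuous if $p=0$). If $\varphi\colon E_1\to E_2$ has étale kernel, then over $\bar K$ one has $\ker\varphi\cong\Z/a\times\Z/b$ with $a\mid b$; such a subgroup contains $E_1[a]$, so $\varphi$ factors as $\varphi=\varphi_0\circ[a]$ with $\varphi_0\colon E_1\to E_2$ \emph{cyclic} of degree $n:=b/a$, still prime to $p$. A cyclic $n$-isogeny between $E_1$ and $E_2$ forces the relation $\Phi_n(j(E_1),j(E_2))=0$, where $\Phi_n\in\Z[X,Y]$ is the classical modular polynomial (as $p\nmid n$, this still makes sense modulo $p$). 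Since $j(E_1),j(E_2)\in K$ and $j(E_1)$ is non-constant, this yields a non-constant $k$-morphism from $C$ to the smooth projective model $X_0(n)$ of $\{\Phi_n=0\}$, which — $p$ being prime to $n$ — has good reduction at $p$ and the same genus as in characteristic $0$. A non-constant map from a curve of genus $g$ to one of genus $\gamma$ forces $\gamma\leq\max\{1,g\}$ by Riemann--Hurwitz, so $g(X_0(n))\leq\max\{1,g\}$; the genus formula for $X_0(n)$ then gives $n\leq 49\max\{1,g\}$, the constant $49$ being dictated by $X_0(49)$ (of genus $1$, and the highest level with $g(X_0(\cdot))\leq 1$).

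\emph{Reduction to the separable core ($p>0$).} Put $d_i:=\degins j(E_i)$ and assume $d_1\leq d_2$, the other case being symmetric. A non-isotrivial elliptic curve over $K$ is ordinary — a supersingular one would have $j$-invariant algebraic over $\F_p$, hence constant — and for such a curve $j(E)\in K^{p}$ holds exactly when $\degins j(E)\geq p$. Descending $E_2$ along its Frobenius chain $v_p(d_2/d_1)$ times (legitimate, since the intermediate curves all have $\degins j\geq d_1\geq1$) produces $E_2'/K$ with $\degins j(E_2')=d_1$ together with a purely inseparable $\bar K$-isogeny $E_2'\to E_2$ of degree $d_2/d_1$. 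Now $E_1$ and $E_2'$ are isogenous with $\degins j(E_1)=\degins j(E_2')$, so $\hmod(E_1)=\hmod(E_2')$ by Theorem~\ref{itheo:comp.hmod}; unwinding the Frobenius and Verschiebung layers of any connecting isogeny (using the preliminaries of \S\ref{sec:isogenies}), and using that on the isogeny volcano of an ordinary curve an ``up--horizontal--down'' excursion returns to the prime-to-$p$ isogeny class of its starting vertex, one obtains an isogeny $E_1\to E_2'$ of degree prime to $p$. By the separable core it can be taken of degree $\leq 49\max\{1,g\}$, and composing with $E_2'\to E_2$ yields $\varphi_0\colon E_1\to E_2$ with $\deg\varphi_0\leq 49\max\{1,g\}\cdot(d_2/d_1)=49\max\{1,g\}\cdot\max\{d_1/d_2,d_2/d_1\}$.

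\emph{Main obstacle.} The delicate part is the last reduction: one must establish that all inseparability can be gathered into a single Frobenius chain — equivalently, that two isogenous elliptic curves sharing the same $\degins j$ are already linked by an isogeny of degree prime to $p$ — so that the inseparable contribution is exactly $\max\{d_1/d_2,d_2/d_1\}$ and no larger. This hinges on the precise behaviour of $\degins j$ (equivalently $\hmod$) under Frobenius and Verschiebung, coming from Theorem~\ref{itheo:comp.hmod} and \S\ref{sec:isogenies}--\S\ref{ssec:Insepdeg}. A minor point is the arithmetic fact ``$g(X_0(n))\leq\max\{1,g\}\Rightarrow n\leq 49\max\{1,g\}$'', obtained from the genus formula plus a finite check of small levels.
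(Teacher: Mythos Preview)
Your proposal is correct and follows essentially the paper's strategy: bound the prime-to-$p$ part via a non-constant map $C\to X_0(n)$, Riemann--Hurwitz, and the genus bound of Lemma~\ref{lemm:genus.estimate}; then reduce the general case by adjusting one curve by a Frobenius/Verschiebung so that the two $\degins j$'s match, and invoke the key fact that equal $\degins j$ forces a prime-to-$p$ isogeny. Your ``main obstacle'' is exactly the paper's Lemma~\ref{lemm:reduction.4}, which is proved there via Corollary~\ref{coro:dualdeg} (giving $\degins\varphi=\degins\hat\varphi$) and the factorisation result Proposition~\ref{prop:isogeny.reduction}, rather than through Theorem~\ref{itheo:comp.hmod} as you suggest.

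Two implementation differences are worth noting. First, you produce the map $C\to X_0(n)$ from the modular-polynomial relation $\Phi_n(j(E_1),j(E_2))=0$ in $K$ and normalisation of the plane curve; the paper instead uses the moduli interpretation of $Y_0(n)$, which obliges it to first show that the kernel of the minimal biseparable isogeny is $G_K$-stable (done in the proof of Proposition~\ref{prop:small.ds.isogeny} using $\mathrm{End}(E_1)=\Z$). Your route sidesteps that step, at the cost of checking that $X_0(n)_{/k}$ really is the normalisation of $\{\Phi_n=0\}_{/k}$ when $p\nmid n$. Second, you \emph{descend} $E_2$ along its Frobenius chain and then compose with Frobenius $E_2'\to E_2$, whereas the paper \emph{ascends} $E_2$ to $E_2^{(p^f)}$ and comes back via Verschiebung; these are dual manoeuvres and equally valid, the paper's version having the mild advantage that $E_2^{(p^f)}$ and $\Vers{p^f}$ are canonically defined over $K$.
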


Isogeny estimates have attracted a lot of attention in the number field case since 
the groundbreaking series of papers of Masser and W\"ustholtz \cite{MasserW1,MasserW2}:
Masser's and W\"ustholz's results were later refined by Pellarin \cite{Pellarin}, 
and more recently by Gaudron and R\'emond \cite{GaudronRemond}.
Each of these papers provides the existence of a ``small'' isogeny between 
a pair of isogenous elliptic curves (in fact \cite{MasserW2, GaudronRemond} even deal with general Abelian varieties). 
Here ``small'' means that the degree of the isogeny is bounded (at worst) in terms of 
the height of the elliptic curves and  simple invariants of the base field.  
Theorem \ref{itheo:min.isog} has the same flavour as these statements.

The proofs in \cite{MasserW1,MasserW2, Pellarin, GaudronRemond}  heavily rely on transcendence methods, 
and uniformisation at an Archimedean place. 
In the context of function fields, all places are non-Archimedean and, 
as far as the authors know, only few transcendence results are available in positive characteristic.
Our proof therefore follows a different strategy as the above mentioned works: 
we essentially rely  on properties of modular curves~$X_0(N)$, the Riemann--Hurwitz formula and an estimate of the genus of $X_0(N)$. 
Note that one could also follow a similar approach relying instead on the gonality of $X_0(N)$ in place of its genus. 
We also note that David and Denis have proved an analogous statement 
for isogenies between Drinfeld modules (see \cite[Th\'eor\`eme 1.3]{DavidDenis}).
 
In Theorem \ref{itheo:min.isog} too, the case where $K$ has characteristic $0$ offers the most striking result: 
in this situation, \emph{the degree of $\varphi_0$ can be bounded independently of $E_1, E_2$}. 
In other words, Theorem \ref{itheo:min.isog} provides a \emph{uniform} isogeny estimate for 
elliptic curves over a function field of characteristic $0$.
In positive characteristic~$p$, one can easily see that such an isogeny estimate cannot be uniform: 
for any $n\geq 1$, the smallest isogeny between a non-isotrivial elliptic curve $E$ and its $p^n$-th Frobenius twist $E^{(p^n)}$ 
is indeed the $p^n$-th power Frobenius isogeny $\Frob{p^n}:E\to E^{(p^n)}$, which has degree $p^n$. 
In that situation, the dependency of Theorem~\ref{itheo:min.isog} on the curves $E_1, E_2$ 
(through the inseparability degree of their $j$-invariants) is actually best possible.

Theorem~\ref{itheo:min.isog} is proved in Section~\ref{sec:isog.estimate} of the paper, 
where we also establish a few corollaries of this isogeny estimate. 
We give, in particular, the following explicit bound on the number of $K$-isomorphism classes 
of elliptic curves in a given $K$-isogeny class (see Theorem \ref{prop:effective.shaf}).

		\begin{icoro}\label{icoro:eff.sha}
		Let $E$ be a non-isotrivial elliptic curve over a function field $K$ of genus $g$.  
		For any $M\geq 1$, consider the set  
		\[\Escr_K(E, M) := \left\{ E'/K \text{ is $K$-isogenous to $E$, with }\degins j(E')\leq M\right\}\big/K\text{-isomorphism}.\]
		The set $\Escr_K(E, M) $ is finite, and we have
		\begin{itemize}
		\item $\displaystyle \big|\Escr_K(E,M)\big| \leq 7^4 \max\{1, g\}^2$ \ 
		if $K$ has characteristic $0$,
		\item $\displaystyle \big|\Escr_K(E,M)\big| \leq 7^4 \max\{1, g\}^2\left(\frac{\log M}{\log p} +1\right)$ \  
		if $K$ has characteristic $p>0$.
		\end{itemize}
		\end{icoro}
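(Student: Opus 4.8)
The statement follows by combining Theorem~\ref{itheo:min.isog} with a counting argument over the modular curves $X_0(N)$, so the plan is to first reduce the problem to bounding, for each relevant degree $N$, the number of elliptic curves $K$-isogenous to $E$ by a cyclic $K$-isogeny of degree $N$, and then to sum these contributions over the (few) values of $N$ that can occur.

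\medskip

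\textbf{Step 1: reduce to cyclic isogenies of bounded degree.} Given $E'\in\Escr_K(E,M)$, Theorem~\ref{itheo:min.isog} furnishes a $\bar K$-isogeny $E\to E'$ whose degree is at most $49\max\{1,g\}\cdot\max\{\degins j(E)/\degins j(E'),\,\degins j(E')/\degins j(E)\}$. When $K$ has characteristic $0$ this bound is simply $49\max\{1,g\}$; in characteristic $p$, since $\degins j(E)$ and $\degins j(E')$ are powers of $p$ and $\degins j(E')\le M$, the ratio is a power of $p$ bounded by $\max\{M,\degins j(E)\}$, but one also knows (it should be available from the earlier sections, via Theorem~\ref{itheo:comp.hmod}) that the inseparable part of a minimal isogeny is controlled, so the separable degree $N$ of a minimal isogeny lies in a short range. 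After factoring out the (forced) inseparable Frobenius part, one is reduced to counting $E'$ reached from $E$ by a \emph{separable} $\bar K$-isogeny; replacing such an isogeny by its ``primitive'' part, one may assume it is cyclic of some degree $N$. A standard Galois-descent argument (the kernel is $\Gal(\bar K/K)$-stable up to the ambiguity of the dual isogeny) shows $E'$ is then $K$-isomorphic to a curve admitting a cyclic $K$-rational isogeny of degree $N$ from $E$, i.e.\ to a $K$-point of $X_0(N)$ lying above the point of $X(1)$ determined by $j(E)$.

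\medskip

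\textbf{Step 2: count points on $X_0(N)$ in a fibre.} The forgetful map $X_0(N)\to X(1)$, $(E,C)\mapsto E$, has degree $\psi(N)=N\prod_{\ell\mid N}(1+1/\ell)$. Hence over the point $j(E)\in X(1)(K)$ there are at most $\psi(N)$ geometric points, so at most $\psi(N)$ isomorphism classes of pairs $(E,C)$, and therefore at most $\psi(N)$ curves $E'$ reachable from $E$ by a cyclic isogeny of degree exactly $N$. (Here one uses that $E$ is non-isotrivial, so $j(E)$ is not one of the finitely many bad values and the fibre is genuinely finite.) Summing over all admissible $N$ gives a bound of the shape $\sum_{N\le N_0}\psi(N)$ with $N_0=49\max\{1,g\}$ in characteristic $0$. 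One then bounds $\sum_{N\le N_0}\psi(N)\le N_0\cdot N_0\cdot\zeta(2)/\zeta(4)\cdot\ldots$ crudely by $\tfrac12 N_0^2\cdot c$ for an explicit small constant, and checks that this is $\le 7^4\max\{1,g\}^2$; since $49=7^2$, the target $7^4\max\{1,g\}^2=(49\max\{1,g\})^2=N_0^2$ leaves exactly the room a clean estimate $\sum_{N\le N_0}\psi(N)\le N_0^2$ needs. In characteristic $p$, the same count applies to the separable part, and the inseparable part contributes a multiplicative factor equal to the number of admissible powers of $p$, namely at most $\log M/\log p+1$ (the Frobenius twists $E^{(p^n)}$ with $p^n\le$ the relevant bound), which yields the stated extra factor.

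\medskip

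\textbf{Main obstacle.} The delicate points are (a) making the reduction in Step~1 fully rigorous in positive characteristic — separating the inseparable (Frobenius) part of a minimal isogeny from its separable part, and controlling which powers of $p$ and which separable degrees $N$ can actually occur, for which one leans on Theorem~\ref{itheo:comp.hmod} and the structure of isogenies from \S\ref{sec:isogenies}; and (b) the bookkeeping of the numerical constant: one must verify that the crude sum $\sum_{N\le 49\max\{1,g\}}\psi(N)$ together with the cyclic-versus-general and dual-isogeny multiplicities really does fit under $7^4\max\{1,g\}^2$, rather than merely under some larger absolute constant times $g^2$. I expect (b) to be the step demanding the most care, since it is where the specific constant $7^4$ is pinned down; (a) is conceptually routine given the preliminary sections but needs to be stated precisely. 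Finally, finiteness of $\Escr_K(E,M)$ is immediate from the bound, so no separate argument is required.
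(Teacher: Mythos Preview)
Your approach is essentially the paper's own: the count $\sum_{N\le N_0}\psi(N)\le N_0^2$ with $N_0=49\max\{1,g\}$ is precisely Lemma~\ref{lemm:cyc.sbgps}, proved via $\psi(n)\le\sigma(n)$ and the elementary bound $\sum_{n\le X}\sigma(n)\le X^2$, so your worry (b) about the constant is unfounded---the inequality $\sum_{N\le N_0}\psi(N)\le N_0^2$ holds on the nose and yields exactly $7^4\max\{1,g\}^2$. The field-of-definition step you describe is Lemma~\ref{lemm:field.of.def}, and cyclicity of the minimal-degree isogeny is extracted in the proof of Proposition~\ref{prop:small.ds.isogeny}.

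The one place where your sketch is genuinely imprecise is the positive-characteristic reduction. Factoring the Frobenius part out of a minimal isogeny $E\to E'$ does not land you at a biseparable isogeny from a Frobenius twist of $E$ in both directions (when $\degins j(E')<\degins j(E)$ it is a Verschiebung that appears). The paper sidesteps this by partitioning $\Escr_K(E,M)$ into the strata $\Escr(u)=\{E':\degins j(E')=p^u\}$ for $0\le u\le \log M/\log p$, choosing an arbitrary basepoint $E_1$ in each non-empty stratum, and observing via Lemma~\ref{lemm:reduction.4} that every $E_2\in\Escr(u)$ is \emph{biseparably} $K$-isogenous to $E_1$ (since their $j$-invariants have equal inseparability degree). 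The characteristic-zero count then applies verbatim to each stratum, and the factor $(\log M/\log p+1)$ is simply the number of strata. This is cleaner than tracking which Frobenius twist of $E$ is involved.
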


%%%%%%%%%%%%%%%%%%%%%%%%%%%%%%%%%%%%%%%%%%%%%%%%%%%%%%%%%%%%%%%%%%%%%%%%%%%%%%%%%%%%%%%%%%%%%%%%%%%%%%%%%%%%%%%%%%%%%%%%%%%%%%%%%%%%%%%%%%%%%%%%%%%%%%%%
%%%%%%%%%%%%%%%%%%%%%%%%%%%%%%%%%%%%%%%%%%%%%%%%%%%%%%%%%%%%%%%%%%%%%%%%%%%%%%%%%%%%%%%%%%%%%%%%%%%%%%%%%%%%%%%%%%%%%%%%%%%%%%%%%%%%%%%%%%%%%%%%%%%%%%%%
\numberwithin{equation}{section}    
\section{Preliminaries about function fields}\label{sec:intro.FF}

Let $k$ be a perfect field of characteristic $p\geq 0$, 
and let $C$ be a smooth projective and geometrically irreducible curve over $k$. 
We let $K:=k(C)$ denote the function field of $C$ over $k$. 
The field $k$ is then algebraically closed in $K$, and we call it the \emph{constant field of $K$}.
It is well-known that the (isomorphism class of the) field $K$ characterises $C$ 
up to birational equivalence over $\bar{k}$ (see \cite{AEC} Chapter II, Remark 2.5 for instance). 
Any smooth projective curve over $k$ whose function field is $K$ will be called a \emph{model} for~$K$. 
The genus of a model $C$ for $K$ will be called the \emph{genus of $K$}.

Any finite extension $L$ of $K$ is also a function field in the above sense. 
Precisely, there is a finite extension $k'/k$ 
and a smooth projective and geometrically irreducible curve $C'/k'$ such that $L=k'(C')$.
The inclusion $K\subset L$ induces a morphism $C'\to C$ between the underlying curves.

We refer to \cite{Rosen} for more details about function fields, in particular to \cite[Chapter V]{Rosen}.

\subsection{Absolute values on $K$} 

For a function field $K$ as above, we let $M_K$ denote the set of \emph{places} of~$K$ \ie{}, 
equivalence classes of discrete valuations on $K$.
Once a model $C$ of $K$ has been chosen, there is a bijection between $M_K$ and the set of closed points on~$C$. 
Given a place $v\in M_K$, the residue field $k_v$ of~$K$ at $v$ is a finite extension of $k$: 
the degree $\deg v:=[k_v:k]$ of this extension will be called the degree of $v$.

To each place $v\in M_K$, one associates an absolute value $|.|_v$ on $K$, defined by 
$|x|_v := \e^{- v(x) \deg v}$ for all $x\in K$,
where $v(x)$ is  the order of $x$ at $v$ (so normalised that $v(K^\ast)=\Z$).
It is classical that $K$ then satisfies a \emph{product formula}: 
\[\forall x \in K^\ast, \qquad \prod_{v\in M_K} |x|_v=1, \quad \text{i.e.,} \ \sum_{v\in M_K} v(x)  \deg v=0;\]
 (see \cite[Chapter I, \S1]{LangFunDio}).
In terms of a model $C$ for $K$, this identity is a reformulation of the fact that 
a rational function on $C$ has as many poles as zeros (%counted 
with multiplicities),  see \cite[Proposition 5.1]{Rosen}.

\subsection{Absolute values on finite extensions of $K$}

Let $K$ be as in the previous subsection, and let~$L/K$ be a finite field extension. 
The constant field $k'$ of $L$ is then a finite extension of $k$. 
We let $M_L$ denote the set of places of $L$. 
Given a place $w\in M_L$, we write $v$ for the place of $K$ lying under $w$ 
(\ie{}, $v$ is the restriction of $w$ to $K\subset L$).
The residue field $k'_w$  of $L$ at $w$ is then a finite extension of $k'$, itself a finite extension of $k$; 
and we let $\deg w := [k'_w: k]$.
(This choice might not be the most common one, but  will avoid some notational complications later on).

We associate to $w\in M_L$ a normalised absolute value $|.|_w$ on $L$: the normalisation is the one such that 
$|x|_w=|x|_v$ for all $x\in K$.
The image of $L^\ast$ under $|.|_w$ is then a subgroup of $\R_{>0}$  which contains 
as a subgroup of finite index the image of $K^\ast$ under $|.|_v$.
That index is the \emph{ramification index of $w$}, we denote it by $e_w:=\big( |L^\ast|_w : |K^\ast|_v \big)$.
We denote by $f_w := [k'_w : k_v]$ the \emph{residual degree at $w$}. 
With our normalisations, notice that $\deg w = [k'_w : k] = [k'_w : k_v] \, [k_v : k] = f_w \, \deg v $.
Finally, write $L_w$ and $K_v$ for the completions of~$L$ at $w$ and of $K$ and $v$, respectively, 
and let $n_w:=[L_w:K_v]$ denote the \emph{local degree}.
By \cite[Chapter I, Proposition 2.4]{LangFunDio}, we have $n_w= e_w \, f_w$.
With these definitions at hand, one shows that 
\[\forall x\in L^\ast, \qquad n_w \log |x|_w = -w(x) \deg w.\]

Thus endowed with these absolute values, one can prove that $L$ too satisfies a product formula:
\[\forall x \in L^\ast, \qquad 
\prod_{w\in M_L} |x|_w^{n_w}=1, \quad \text{i.e.,} \ \sum_{w\in M_L} w(x) \deg w=0.\]
The key part of the proof is to show that, for any $x\in L$ and any $v\in M_K$, we have 
$|\mathbf{N}_{L/K}(x)|_v = \prod_{w\mid v} |x|_w$.
The latter identity is usually proved under the assumption that $L/K$ be separable but, 
as noted in \cite{LangFunDio}, it remains true without this assumption, 
provided that $v$ is ``well-behaved'' in the terminology of Lang. 
A general proof may be found in \cite[Chapter I, Theorem 5.3]{DwoGerSul}, 
or deduced from \cite[Theorem 7.6]{Rosen}.

\subsection{Weil height on $\overline{K}$}\label{ssec:hweil}

We use the notation introduced above and we also fix an algebraic closure $\bar{K}$ of $K$. 
Algebraic extensions of $K$ will be viewed as sub-extensions of $\bar{K}$.
Let $P=[x_0:x_1]\in{\mathbb{P}^1(\bar{K})}$ be a point, 
and pick a finite extension $L/K$ over which $P$ is rational. 
We define the relative height of $P$ by 
\begin{equation*}
h_L(P)=\sum_{w\in{M_L}}n_w \log\max\{|x_0|_w, |x_1|_w\}, 
\end{equation*}
and the \emph{absolute logarithmic Weil height} of $P$ by the formula:
\begin{equation}
h(P) := \frac{h_L(P)}{[L:K]} =\frac{1}{[L:K]}\sum_{w\in{M_L}}n_w \log\max\{|x_0|_w,|x_1|_w\}.
\end{equation}
One may check that this last definition does not depend on the choice of an extension $L$ containing $P$, 
nor on the choice of homogeneous coordinates for $P$  (see \cite[Chapter III, \S1]{LangFunDio} or \cite[Proposition 7.7]{Rosen}).
This construction thus defines a height function on $\mathbb{P}^1(\bar{K})$, which takes values in $\Q_{\geq0}$. 
\\

For any $f\in\bar{K}$, we  write $h(f)$ for the absolute logarithmic Weil height 
of the point $[f:1]\in\mathbb{P}^1(\bar{K})$. Explicitly, for any $f\in\bar{K}$, we have
\[h(f) = \frac{1}{[L:K]}\sum_{w\in M_L} n_w \max\{0,\log |f|_w\}
=\frac{1}{[L:K]}\sum_{w\in M_L} \deg w \, \max\{0,-w(f)\},\]
where $L$ is any finite extension of $K$ containing $f$.
Choosing a model $C'/k'$ for $L$, one may view an element $f\in L^\ast$ as a morphism $f:C'\to\P^1$ defined over $k'$. 
Write $\div_\infty(f)\in \mathrm{Div}(C')$ for the divisor of poles of that function.
By the right-most expression of the previous display, we have 
\[ h(f) = \frac{\deg\big(\div_\infty(f)\big)}{[L:K]},\]
which means that $h_L([f:1])=[L:K]h(f)$ equals the degree of $f$, viewed as a morphism $C'\to\P^1$.
\\

For a given $f\in\bar{K}$, note  that $h(f)=0$ if and only if $f$, viewed as a morphism $C'\to\P^1$, is constant 
(\ie{}, $f$ belongs to an algebraic extension of the constant field $k$). 
Indeed, a non-constant morphism $C'\to\P^1$ is surjective so that, in particular, 
its divisor of poles is a non-zero effective divisor. 
Hence the height of this morphism must be positive.

Note that the height $h:\bar{K}\to\Q_{\geq0}$ does not necessarily satisfy the Northcott property:  
unless the constant field $k$ is finite,  
the set $\{f\in K : h(f)=0\}$ (which equals $k$) is not finite.

\subsection{Inseparability degree}\label{ssec:Insepdeg}

Let $K$ be a function field with constant field $k$.
We assume in this subsection that $K$ has \emph{positive} characteristic $p$.
Recall that the \emph{inseparability degree} of an element $f\in K^\ast$ is defined by 
\[\degins(f):=\begin{cases}
1 & \text{ if } f\in k, \\
\big[K:{k}(f)\big]_i & \text{ if } f\notin k,
\end{cases}\] 
where $\big[K:{k}(f)\big]_i$ denotes the inseparability degree of the extension $K/k(f)$ 
(which is finite under the assumption that $f$ be non-constant).
The inseparability degree of $f$ is a non-negative power of $p$.
If $f$ is non-constant, one can prove that $\degins(f)=p^e$ where 
$e\geq 0$ is the maximal integer such that $f\in K^{p^e}$.

Fixing a model $C$ of $K$, we  view $f$ as a morphism $f:C\to\P^1$.
We may then factor $f$ as the composition $f_s\circ F_q$ of a (purely inseparable) Frobenius map $F_q:C\to C^{(q)}$ 
for some power $q$ of $p$, with a separable map $f_s:C^{(q)}\to\P^1$. 
The inseparability degree of $f$ then equals $q = \deg F_q$. 
We refer to \cite[Chapter II, Corollary 2.12]{AEC} for a proof.

For completeness, we also note the following easily proved fact. 
Given a finite extension $K'/K$ with inseparability degree $[K':K]_i$, 
the inseparability degree of an element $f\in K$ viewed as an element of $K'$ is equal to 
\[\degins(f\in K') = \degins(f\in K) \, [K':K]_i.\]

%%%%%%%%%%%%%%%%%%%%%%%%%%%%%%%%%%%%%%%%%%%%%%%%%%%%%%%%%%%%%%%%%%%%%%%%%%%%%%%%%%%%%%%%%%%%%%%%%%%%%%%%%%%%%%%%%%%%%%%%%%%%%%%%%%%%%%%%%%%%%%%%%%%%%%%%
%%%%%%%%%%%%%%%%%%%%%%%%%%%%%%%%%%%%%%%%%%%%%%%%%%%%%%%%%%%%%%%%%%%%%%%%%%%%%%%%%%%%%%%%%%%%%%%%%%%%%%%%%%%%%%%%%%%%%%%%%%%%%%%%%%%%%%%%%%%%%%%%%%%%%%%%
\section{Preliminaries on reduction of elliptic curves}\label{sec:intro.EC}

Let $K$ be a function field (in the sense of section \ref{sec:intro.FF}) with constant field $k$. 
An elliptic curve over $K$ is called \emph{isotrivial} if its $j$-invariant is constant (\ie{} is an element of $k$).
After a finite extension of $K$, an isotrivial elliptic curve becomes isomorphic 
to (the base change of) an elliptic curve defined over a finite extension of $k$.
We will be mostly interested in elliptic curves which are \emph{not} isotrivial.

For more complete overviews of the arithmetic of elliptic curves over function fields, 
the reader is referred to \cite{UlmerParkCity}  and \cite[Chapter III]{ATAEC}.

\subsection{Minimal discriminant and conductor}\label{ssec:mindisc.cond}

Let $E$ be an elliptic curve over $K$.  Let $v\in M_K$ be a place of $K$.
After a suitable change of coordinates, the curve $E$ admits Weierstrass models
\begin{equation}\label{Wmodel}
y^2+a_1 xy+a_3 y=x^3+a_2x^2+a_4x+a_6,
\end{equation}
where the coefficients $a_1, a_2, a_3, a_4, a_6\in K$ are integral at $v$ \ie{}, models 
with $v(a_i)\geq 0$ for $i\in\{1,2,3,4,6\}$.
Each of these models has a discriminant $\Delta(a_1, a_2, a_3, a_4, a_6)\in K$  which, 
being a polynomial in the $a_i$'s, is also integral at $v$.
We write $\delta_v\in\Z_{\geq 0}$ for the minimal value of $v(\Delta(a_1, a_2, a_3, a_4, a_6))$ 
among all models of~$E$ which are integral at $v$.
A $v$-integral Weierstrass model \eqref{Wmodel} of $E$ is called \emph{minimal integral at $v$} 
if the valuation at $v$ of its discriminant equals $\delta_v$.
\\

Given this collection of local data, we define a global invariant of $E$: the \emph{minimal discriminant} of $E/K$ 
is the divisor on $K$ defined by $\Deltamin(E/K)=\sum_{v\in{M_K}}\delta_v \,  v.$ 

We also recall that the \emph{conductor} of $E/K$ is the divisor given by
$\cond({E/K})=\sum_{v\in{M_K}}f_v \,  v$, 
where $f_v\in\Z_{\geq 0}$ is the \emph{local conductor of $E$ at $v$}.
We refer the reader to \cite[Chapter IV, \S10]{ATAEC} for a detailed definition of~$f_v$.

\subsection{Good and semi-stable places}\label{ssec:semistab.red}

Let $E/K$ be an elliptic curve and $v\in M_K$ be a place of $K$. 
We say that $E$ has \emph{good reduction at $v$} if and only if the reduction modulo $v$ 
of one/any integral minimal model of $E$ at $v$ is a smooth curve over the residue field $k_v$ of $K$ at $v$.

More generally, we say that $E$ has \emph{semi-stable reduction at $v$} if 
there exists an integral minimal model of $E$ at $v$ whose reduction modulo $v$ has at most one double point.
This is equivalent to requiring that the reduction of $E$ at $v$ is either good or multiplicative.
The theorem of Kodaira-N\'eron (see \cite[Chapter~VII, \S6, Theorem 6.1]{AEC} for instance)
implies that for any elliptic curve $E/K$ and  any  place $v\in M_K$ of semi-stable reduction for $E$, 
one has $v(\Delta_v)=-v(j)$, where $j$ denotes the $j$-invariant of $E$, 
and $\Delta_v$ the discriminant of a minimal integral model of $E$ at $v$. 

An elliptic curve over $K$ is called \emph{semi-stable} if it 
has semi-stable reduction at every place of $K$. 
For any elliptic curve $E$ over $K$, there exists a finite extension $K'/K$ such that $E\times_KK'/K'$ is semi-stable. 
This statement is the famous \emph{semi-stable reduction theorem} (see \cite[Chapter XI]{MB}).

\subsection{Tate's uniformisation of elliptic curves with non-integral $j$-invariants}\label{ssec:tate.unif} 

In this subsection, we work in the following setting.
Consider a field $F$ which is complete for a non-trivial non-Archimedean valuation $|.|$. 
We review some aspects of Tate's uniformisation of elliptic curves over $F$: 
the reader is referred to Tate's beautiful survey \cite{Tate93} for a more in-depth presentation, 
or to \cite[Chapter V, \S3-\S5]{ATAEC} for an overview in the case where $F$ is a finite extension of~$\Q_p$. 

Let $t\in{F}$ be such that $|t|<1$. 
Consider the  curve defined over $F$ by 
\[\E_t:\qquad Y^2 +XY =X^3 + c_4(t) X + c_6(t),\]
where $c_4, c_6 : \bar{F}\to\bar{F}$ are certain power series which converge on the disc $\{z\in \bar{F}: |z|<1\}$. 
Tate proved that $\E_t$ is  an elliptic curve, whose $j$-invariant $j(\E_t)\in F$ is given 
by a convergent power series in~$t$ and satisfies  $|j(\E_t)| = |t|^{-1}>1$. 
Furthermore, he has shown that there is an  analytic group isomorphism  $\E_t \to \G_m/t^\Z$ (the uniformisation map) 
which is Galois equivariant. In other words, for any algebraic extension $F'$ of $F$, 
there is a group isomorphism $\E_t(F')\simeq (F')^\ast/t^\Z$.
Note that  $\E_t$ has split multiplicative reduction at the maximal ideal of $|.|$. 

Conversely, let $E$ be an elliptic curve over $F$ whose $j$-invariant satisfies $|j(E)|>1$.
Then, there exists a unique $t\in F$ with $|t|<1$ such that $E$ is isomorphic to $\E_t$, 
the isomorphism being defined over an at most quadratic extension  of $F$. See \cite[Chapter V, Theorem 5.3]{ATAEC}.

For $t_1, t_2\in F$ such that $|t_1|<1$ and $|t_2|<1$, let $\Hom(\E_{t_1}, \E_{t_2})$ denote 
the $\Z$-module consisting of isogenies $\E_{t_1}\to \E_{t_2}$ (defined over $\bar{F}$) 
together with the constant morphism equal to $0_{\E_{t_2}}$.
The theorem page 16 of \cite{Tate93} states that $\Hom(\E_{t_1}, \E_{t_2})$ is in bijection 
with the set $\big\{(n_1, n_2)\in\Z^2 : t_1^{n_1} = t_2^{n_2}\big\}$.
In particular, for any $t\in F$ with $|t|<1$, the ring of $\bar{F}$-endomorphisms of $\E_t$ is isomorphic to $\Z$. 
Indeed, the above implies that we have 
\begin{equation}\label{eq:endom.tate.unif}
\Hom(\E_{t}, \E_{t}) \simeq \big\{(n,n), \ n\in\Z\big\},\end{equation}
where the correspondence associates the multiplication-by-$n$ map $\E_t\to\E_t$ to the pair $(n,n)\in\Z^2$.

\subsection{CM and isotriviality}\label{ssec:CM.isotriv} %%

Over a number field, it is well known that an elliptic curve with non-integral $j$-invariant 
does not have CM (see \cite[Chapter 5, \S6, Theorem 6.3]{ATAEC}). 

The analogue statement also holds over function fields:
		\begin{lemm}\label{lemm:CM.isotriv}
		Let $K$ be a function field as above, and let $E$ be an elliptic curve over $K$ whose $j$-invariant is not constant. 
		Then, the  ring $\mathrm{End(E)}$ of $\bar{K}$-endomorphisms of $E$  is isomorphic to $\Z$.
		In other words, the curve $E$ has ``no complex multiplication''.
		\end{lemm}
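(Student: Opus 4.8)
The plan is to argue by contradiction: suppose $E/K$ is an elliptic curve with non-constant $j$-invariant whose endomorphism ring $\mathrm{End}(E)$ (over $\bar K$) is strictly larger than $\Z$. Since $E$ is an elliptic curve, $\mathrm{End}(E)$ is then either an order in an imaginary quadratic field or, in positive characteristic, possibly an order in a quaternion algebra (the supersingular case). In either case, $\mathrm{End}(E)$ contains an element $\alpha$ which is not a rational integer, so $\Z[\alpha]$ is an order in a quadratic imaginary field $F=\Q(\alpha)$. The key point I want to exploit is that such ``extra'' endomorphisms are defined over $\bar K$, so they persist after base change to any completion.

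The main step is to pass to a place of bad reduction and invoke Tate's uniformisation from \S\ref{ssec:tate.unif}. Because $j(E)$ is non-constant, by the product formula (or simply because a non-constant morphism $C\to\P^1$ has a pole) there is at least one place $v\in M_K$ at which $v(j(E))<0$, i.e.\ $|j(E)|_v>1$. After a finite extension of $K$ (which changes neither the endomorphism ring over $\bar K$ nor the fact that $j$ is non-constant) I may assume $E$ has split multiplicative reduction at a place $v$ lying over the original one; let $F:=K_v$ be the completion, a field complete for a non-trivial non-Archimedean valuation with $|j(E)|>1$. By the converse part of Tate's theorem, $E\times_K F$ is isomorphic over an at most quadratic extension of $F$ to a Tate curve $\E_t$ for a unique $t\in F$ with $|t|<1$. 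But \eqref{eq:endom.tate.unif} tells us that the ring of $\bar F$-endomorphisms of $\E_t$ is isomorphic to $\Z$. Since any $\bar K$-endomorphism of $E$ base-changes to a $\bar F$-endomorphism of $E\times_K F\cong\E_t$ (over $\bar F$), we get an injection $\mathrm{End}_{\bar K}(E)\hookrightarrow\mathrm{End}_{\bar F}(\E_t)\cong\Z$, forcing $\mathrm{End}(E)=\Z$ and contradicting the assumption.

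I expect the only genuinely delicate point to be the reduction to the split multiplicative case: one must check that enlarging $K$ to a finite extension $K'$ over which $E$ acquires split multiplicative reduction at some place above $v$ is harmless. This is fine because $\mathrm{End}_{\bar K}(E)=\mathrm{End}_{\bar{K'}}(E\times_K K')$ (both are computed over a common algebraic closure), and $j(E)$ viewed in $K'$ is still non-constant (an element of $\bar k$ would lie in $k$ to begin with), with $|j(E)|_w>1$ at any place $w\mid v$. One also needs that base change induces a ring homomorphism on endomorphism rings that is injective — this is standard, since an isogeny or endomorphism that becomes zero after a field extension was already zero. With these routine checks in place, the argument above goes through. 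An alternative, if one prefers to avoid Tate curves, would be to reduce modulo a place of good reduction and compare with the (possibly larger) endomorphism ring of the reduced curve, but then controlling the supersingular case in characteristic $p$ is more awkward, so the Tate-uniformisation route seems cleanest.
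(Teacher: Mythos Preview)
Your proposal is correct and follows essentially the same route as the paper: pick a place $v$ where $j(E)$ has a pole, pass to the completion, use Tate's uniformisation to identify $E$ with a Tate curve $\E_t$ over a finite extension, and invoke \eqref{eq:endom.tate.unif} to conclude that every endomorphism is multiplication by an integer. The paper's write-up is marginally more direct --- it argues forward rather than by contradiction, and it completes at $v$ immediately and absorbs the at-most-quadratic twist into the finite extension of $K_v$ given by Tate's theorem, rather than first enlarging $K$ to attain split multiplicative reduction --- but these are cosmetic differences, not substantive ones.
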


By the work of Serre--Tate \cite{SerreTate}, we know that an elliptic curve defined over a local field and 
with complex multiplication has integral $j$-invariant, hence has potentially good reduction. 
From this we infer that an elliptic curve defined over a function field $K$, with complex multiplication 
and with potentially good reduction everywhere must have integral $j$-invariant at all places of $K$. 
In particular its $j$-invariant, having no poles, must be a constant rational map, which means that the curve $E$ is isotrivial. 
In positive characteristic, the isotriviality is also implied by Deuring's Theorem (see \cite[Chapter~13, \S6, Theorem~6.4]{Hus}). 
Let us give an \emph{ad hoc} proof of this lemma, which is independent of the characteristic of $K$. 
		\begin{proof} 
		The $j$-invariant $j(E)$ being non-constant, there exists a place $v$ of $K$ at which $j(E)$ has a pole (\ie{}, for which $v(j(E))<0$).
		Let $K_v$ denote the completion of $K$ at~$v$; the field $K_v$ is of the type considered in subsection \S\ref{ssec:tate.unif}.
		Since $v(j(E))<0$, we know from the results recalled there that there is a unique $t\in K_v$ with $v(t)>0$ such that 
		$E/K_v$ becomes isomorphic to the Tate curve $\E_t$ over a finite extension of~$K_v$. 
		Through this isomorphism, an endomorphism $\psi:E\to E$ induces an endomorphism $\Psi : \E_t\to\E_t$. 
		By \eqref{eq:endom.tate.unif}, there exists an integer $n$ such that $\Psi$ is the multiplication-by-$n$ map of $\E_t$.
		The original endomorphism $\psi : E\to E$ is thus nothing else but the multiplication map $[n]:E\to E$. Hence the result.
		\end{proof}

%%%%%%%%%%%%%%%%%%%%%%%%%%%%%%%%%%%%%%%%%%%%%%%%%%%%%%%%%%%%%%%%%%%%%%%%%%%%%%%%%%%%%%%%%%%%%%%%%%%%%%%%%%%%%%%%%%%%%%%%%%%%%%%%%%%%%%%%%%%%%%%%%%%%%%%%
%%%%%%%%%%%%%%%%%%%%%%%%%%%%%%%%%%%%%%%%%%%%%%%%%%%%%%%%%%%%%%%%%%%%%%%%%%%%%%%%%%%%%%%%%%%%%%%%%%%%%%%%%%%%%%%%%%%%%%%%%%%%%%%%%%%%%%%%%%%%%%%%%%%%%%%%
\section{Heights of elliptic curves}\label{sec:heights.EC} %%

Let $k$ be a perfect field and $C$ be a smooth projective and geometrically irreducible curve over $k$. 
We let $K:=k(C)$ be its function field. 
We let $p\geq 0$ denote the characteristic of $K$ ($p$ is then either $0$ or a prime).

\subsection{Differential  and stable heights}\label{ssec:hdiff}

Let $L$ be a finite field extension of $K$; we choose a model $C'$ of~$L$ and write $k'$ for the field of constants of $L$. 
For an elliptic curve   $E$ over $L$, its minimal regular model $\Ecal$ 
 is the unique (up to isomorphism) smooth projective and geometrically irreducible surface over $k'$, 
equipped with a minimal surjective morphism $\pi:\Ecal\to C'$ whose generic fiber is $E$.
We denote by $\pi:\Ecal\to C'$ the minimal regular model of $E$ and $s_0:C'\to\Ecal$ its zero section.
We refer to \cite[Lecture 3, \S1-\S2]{UlmerParkCity} for more details about the construction of this model. 
Let $\Omega^1_{\Ecal/C'}$ be the sheaf of relative differential $1$-forms on $\Ecal$.
Pulling-back $\Omega^1_{\Ecal/C'}$ along the zero section~$s_0$ results in a line bundle on $C'$, 
which will be denoted by $\omega_{E/L} := s_0^\ast\Omega^1_{\Ecal/C'}$.
One  then defines the \emph{differential height of $E/L$} by
\[ \hdiff(E/L) :=   \frac{\deg\omega_{E/L}}{[L:K]},\]
where $\deg$ here means  degree of a line bundle on $C'$. 
The following identity is well-known:
		\begin{lemm}\label{lemm:silverman.hdiff}
		In the above setting, denote by  $\Deltamin(E/L)\in\Div(C')$ the minimal discriminant divisor of~$E$. 
		Then one has
		\begin{equation}\notag{}
		12\, \hdiff(E/L) = \frac{\deg\Deltamin(E/L)}{[L:K]}.
		\end{equation}
		\end{lemm}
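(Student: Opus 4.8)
The plan is to reduce the identity to a local, place-by-place computation. Both sides of the claimed formula are, up to dividing by $[L:K]$, degrees of divisors on the model $C'$: on the left-hand side we have $12\deg\omega_{E/L}$, and on the right-hand side $\deg\Deltamin(E/L)$. Since $\Deltamin(E/L)=\sum_v \delta_v\, v$ by the definition recalled in \S\ref{ssec:mindisc.cond}, it suffices to show that $12\,\omega_{E/L}$ and $\Deltamin(E/L)$ represent the same divisor class on $C'$, or equivalently that they have the same degree. I would work with the divisor associated to $\omega_{E/L}$ directly: choose, on a Zariski-open cover of $C'$, Weierstrass equations for $E$ that are minimal integral at each $v$ in the open set, with associated invariant differential $\eta=\mathrm{d}x/(2y+a_1x+a_3)$; the forms $\eta$ trivialise $\omega_{E/L}$ locally, and two such local trivialisations on an overlap differ by a unit $u$ with $u^{12}$ equal to the ratio of the two discriminants (the standard transformation law under a change of Weierstrass model). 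This is exactly what is needed to identify the transition cocycle of $\omega_{E/L}^{\otimes 12}$ with that of the line bundle $\mathcal{O}_{C'}(-\Deltamin(E/L))$, up to sign conventions, hence to get equality of degrees.

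\textbf{Key steps, in order.} First I would recall the change-of-coordinates formulas for Weierstrass models: under an admissible transformation the discriminant scales by $u^{12}$ and the invariant differential $\eta$ scales by $u^{-1}$ (see \cite[Chapter III]{AEC}). Second, I would cover $C'$ by finitely many affine opens $U_i$ on each of which $E$ admits a Weierstrass model that is simultaneously minimal integral at every place $v\in U_i$ — this is possible since minimality is a condition at finitely many places and the Weierstrass coefficients can be adjusted in $L$. Third, on each $U_i$ the differential $\eta_i$ generates $\omega_{E/L}$ over $U_i$, so $\omega_{E/L}$ is represented by the cocycle $(u_{ij})$ where $\eta_i=u_{ij}\eta_j$ on $U_i\cap U_j$; raising to the twelfth power and using the transformation law gives $u_{ij}^{12}=\Delta_j/\Delta_i$, where $\Delta_i$ is the discriminant of the chosen model over $U_i$. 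Fourth, I would identify this with the divisor: for a place $v\in U_i$, the local minimal discriminant exponent is $\delta_v=v(\Delta_i)$, and matching the cocycles shows $\deg\omega_{E/L}^{\otimes 12}=\deg\Deltamin(E/L)$ (with the appropriate sign — $\omega_{E/L}$ is commonly normalised so that its degree is non-negative, matching $\delta_v\ge 0$). Dividing by $[L:K]$ yields the stated identity.

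\textbf{Main obstacle.} The routine part is the cocycle bookkeeping; the one genuinely delicate point is the existence of a Zariski-open cover of $C'$ over which one has genuinely \emph{minimal} Weierstrass models — i.e., passing from the collection of local minimal models (one per place, a priori living in the completions $L_w$) to finitely many models over affine opens of $C'$ achieving minimality at all places of that open simultaneously. Over a Dedekind base this is classical when the relevant Picard group obstruction vanishes (e.g. for $\mathbb{P}^1$), but in general a minimal Weierstrass model over all of $C'$ need not exist; the correct statement is that one has minimal models locally on $C'$, which is all that is needed here, and the line bundle $\omega_{E/L}$ is precisely the object that globalises this local data. I would therefore phrase the argument so that only local minimality on an open cover is invoked, citing \cite[Lecture 3]{UlmerParkCity} for the construction of $\omega_{E/L}$ from $\pi:\Ecal\to C'$ and the fact that $\omega_{E/L}$ is locally generated by the invariant differential of a minimal Weierstrass equation. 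An alternative, cleaner route — which I might use instead to sidestep the cover issue entirely — is to compare both quantities after base change to a finite extension $K''/L$ over which $E$ acquires semi-stable reduction: there the Kodaira–N\'eron relation $v(\Delta_v)=-v(j)$ recalled in \S\ref{ssec:semistab.red} makes the discriminant side transparent, and one checks that both $\hdiff$ and $\tfrac{1}{[L:K]}\deg\Deltamin$ transform the same way under base change (each is multiplied by $[K'':L]$ up to the ramification contributions, which cancel), reducing to the semi-stable case.
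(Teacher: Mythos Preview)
Your main argument --- identifying $\omega_{E/L}^{\otimes 12}$ with the line bundle attached to $\Deltamin(E/L)$ via the Weierstrass transformation law $\Delta\mapsto u^{12}\Delta$, $\eta\mapsto u^{-1}\eta$ --- is exactly the approach the paper indicates: it simply cites Proposition~1.1 of \cite{Silverman_HEC} and remarks that the main point is that $\Deltamin(E/L)$ provides a section of $\omega_{E/L}^{\otimes 12}$, with no Archimedean terms in the function-field setting. Your worry about the open cover is overstated: a minimal integral model at $v$ can be chosen with coefficients in $L$ (not merely in the completion $L_w$), and any such model is automatically minimal at all but finitely many places, so finitely many of them cover~$C'$.

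One caution: drop the ``alternative, cleaner route'' via base change to a semi-stable extension. The assertion that $\hdiff$ and $[L:K]^{-1}\deg\Deltamin$ transform identically under finite base change is not something you can take for granted --- at a place of additive reduction that becomes semi-stable, the local discriminant exponent genuinely drops, and $\hdiff$ drops as well (the paper notes $\hdiff$ is non-increasing under base change), but showing these two drops agree amounts to knowing the lemma over both $L$ and the extension. Moreover, even in the semi-stable case the identity $12\deg\omega_{E/L}=\deg\Deltamin(E/L)$ still requires the same cocycle/section argument; the Kodaira--N\'eron relation $\delta_v=-v(j)$ only relates $\Deltamin$ to $j(E)$, not to $\omega_{E/L}$. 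So this detour is circular and should be discarded in favour of your primary argument.
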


This may be proved just as Proposition 1.1 in \cite{Silverman_HEC}: the main point is that $\Deltamin(E/L)$ provides 
a section of $\omega^{\otimes 12}_{E/L}$.
In contrast to the number field setting (treated in \cite{Silverman_HEC}) however, 
no ``Archimedean terms'' come into play in the computation of the degree of the line bundle $\omega_{E/L}$.

One checks that the differential height does not increase in finite extensions, by which we mean that, 
if $L'/L$ is a further finite extension, one has $\hdiff(E\times_L L' /L') \leq \hdiff(E/L)$. 
We refer the reader to \cite[Proposition 2.3, page 228]{MB} for a proof of this fact for general Abelian varieties. 
\\

Given an elliptic curve $E$ over $L$, as was recalled in \S\ref{ssec:semistab.red} above, 
we may find a finite extension $L'/L$ such that the base-changed curve $E\times_L{L'}/L'$ is semi-stable. 
We then define the \emph{stable height of $E$} to be
\[ \hstab(E/L) 
:=   \hdiff(E\times_{L}L'/L') =\frac{\deg(\omega_{E\times_L{L'}/L'})}{[L':K]}.\]
This definition makes sense: it is indeed shown in \cite[Proposition 2.3, page 228]{MB} that 
the quantity~$\hstab(E/L)$ does not depend on the choice of a particular extension $L'/L$ 
over which $E$ attains semi-stable reduction.

\subsection{Modular height}\label{ssec:hmod} %%

Let $E$ be an elliptic curve defined over $\bar{K}$. 
Its $j$-invariant $j(E)$ lies in~$\bar{K}$: 
we can then define the \emph{modular height of $E$} to be
\begin{equation*}
\hmod(E) := h\big(j(E)\big) \in \Q_{\geq 0},
\end{equation*}
where $h$ is the Weil height on $\bar{K}$ defined in \S\ref{ssec:hweil}.
This modular height is closely related to the height called 
\emph{hauteur modulaire num\'erique} defined by Moret-Bailly in \cite[page 226]{MB}. 

If we fix a finite extension  $L$ of $K$ containing $j(E)$, and write $L=k'(C')$ 
(where $k'/k$ is a finite extension and $C'$ is a smooth projective geometrically irreducible curve  over $k'$), 
we may view $j(E)$ as a morphism $j(E):C'\to\P^1$ defined over $k'$. 
Denoting by $\div_\infty(j(E))\in\Div(C')$ the divisor of poles of this map, we then have
\[\hmod(E)=\frac{\deg( \div_\infty(j(E)))}{[L:K]}.\]
(See the discussion in~\S\ref{ssec:hweil}). 
It follows that the modular height $\hmod(E)$ vanishes if and only if 
$j(E)$ is a constant element of $\bar{K}$ \ie{}, if and only if $E$ is isotrivial.

\subsection{Comparison of heights}\label{ssec:comp.heights} %%
 
We now compare the various notions of heights of an elliptic curve over~$\bar{K}$ that were just introduced.
Let $L/K$ be a finite extension, and let $E$ be an elliptic curve over $L$.
		\begin{prop}\label{prop:compare.hdiff.hmod}	
		In the above setting, one has
		\begin{equation}\label{eq.comparaison.hdiff.hmod}	
		0  \leq \hdiff(E/L) -\frac{\hmod(E)}{12} 
		\leq \frac{1}{[L:K]}  \, \sum_{w \text{ \rm not s.s.}} \deg w,
		\end{equation}
		where the sum is over the (finite) set of places of $L$ where $E$ does not have semi-stable reduction.
		\end{prop}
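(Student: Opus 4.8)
The plan is to compare the differential height of $E/L$ with that of a semi-stable model obtained after base change, and to control the difference by the contribution of the bad (non-semi-stable) places. First I would invoke Lemma~\ref{lemm:silverman.hdiff} to rewrite everything in terms of discriminant divisors: we have $12\,\hdiff(E/L) = \deg\Deltamin(E/L)/[L:K]$, and likewise for the stable height. Since the modular height satisfies $\hmod(E) = \deg(\div_\infty(j(E)))/[L:K]$ by \S\ref{ssec:hmod}, the inequality \eqref{eq.comparaison.hdiff.hmod} is equivalent to the purely divisor-theoretic statement
\[
0 \leq \deg\Deltamin(E/L) - \deg\div_\infty(j(E)) \leq 12 \sum_{w \text{ not s.s.}} \deg w,
\]
so the whole problem reduces to a place-by-place comparison of the local discriminant exponent $\delta_w$ with the pole order $\max\{0, -w(j(E))\}$ of the $j$-invariant.

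Next I would carry out this local analysis, splitting the places $w \in M_L$ into three cases. At a place of good reduction, $\delta_w = 0$ and $j(E)$ is integral, so $w(j(E)) \geq 0$; both sides contribute $0$. At a place $w$ of \emph{multiplicative} (hence semi-stable but not good) reduction, the Kodaira--N\'eron theorem recalled in \S\ref{ssec:semistab.red} gives exactly $v(\Delta_w) = -v(j)$, i.e. $\delta_w = \max\{0,-w(j(E))\}$; again the two contributions agree and the difference is $0$. At a place $w$ where $E$ does \emph{not} have semi-stable reduction (additive reduction), one knows $\delta_w \geq 1$; on the other hand, after passing to an extension $L'/L$ where $E$ acquires semi-stable reduction, the pole order of $j(E)$ is an invariant of the curve over $\bar{K}$ and relates to $\delta_w$ by a bounded local discrepancy. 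The standard fact (Silverman's tables / the classification of Kodaira fibre types, or Tate's algorithm) is that in all cases $\max\{0,-w(j(E))\} \leq \delta_w$, which already gives the left inequality $\hmod(E)/12 \leq \hdiff(E/L)$ after summing and dividing by $[L:K]$. For the upper bound one needs $\delta_w - \max\{0,-w(j(E))\} \leq 12$ at every additive place; this again follows from the finitely many possibilities for the local minimal discriminant exponent in the additive case (potentially good reduction forces $\delta_w \leq 12$ while $j$ is then integral so $\max\{0,-w(j)\}=0$; potentially multiplicative reduction gives $\delta_w = \max\{0,-w(j)\} + (\text{a quantity} \leq 6)$, comfortably $\leq 12$). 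Summing $\delta_w - \max\{0,-w(j(E))\} \leq 12$ over the bad places $w$, weighting by $\deg w$, dividing by $[L:K]$, and using that good and multiplicative places contribute nothing, yields the right-hand inequality.

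The main obstacle is the clean treatment of the additive (non-semi-stable) places: one must be careful that $\delta_w$ is computed from a model over $L$ while the pole order of $j(E)$ is intrinsic, and that the comparison constant $12$ is genuinely uniform over all Kodaira types and all residue characteristics (including the wild cases $p = 2, 3$ where Tate's algorithm is subtler). I would handle this either by quoting Silverman's table of local data (\cite[Chapter~IV, \S9]{ATAEC}), which lists $\delta_w$ and $v(j)$ for every fibre type and shows $\delta_w - \max\{0,-v(j)\}$ never exceeds $12$, or — to keep the argument characteristic-free and self-contained — by the following more conceptual route: pass to $L'/L$ with $E_{L'}$ semi-stable, note $\deg\Deltamin(E_{L'}/L') \geq \deg\div_\infty(j(E))$ with equality up to the multiplicative-place bookkeeping already done, and then bound the difference $\deg\Deltamin(E/L)\cdot[L':L] - \deg\Deltamin(E_{L'}/L')$ locally using that $0 \leq \delta_w \leq 12$ whenever $E$ has \emph{potentially good} reduction at $w$, i.e. whenever $j(E)$ is $w$-integral. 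Packaging these local inequalities and dividing through by $[L':K] = [L':L]\,[L:K]$ gives the stated bound, with the sum over non-semi-stable places of $L$ appearing precisely because those are the only places contributing to the discrepancy.
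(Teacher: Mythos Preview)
Your approach is essentially the paper's: both reduce via Lemma~\ref{lemm:silverman.hdiff} and the definition of $\hmod$ to the place-by-place inequality $0 \leq \delta_w - \max\{0, -w(j(E))\} \leq 12$, dispatch semi-stable places using the Kodaira--N\'eron theorem (where the difference is~$0$), and bound the additive-place contribution by~$12$ via the Kodaira-type table in \cite[Chapter~IV]{ATAEC}; the paper writes the local term as $\min\{\delta_w, \delta_w + w(j(E))\}$, which is exactly your expression. Your alternative ``base change to a semi-stable model'' route is not taken in the paper and does not actually sidestep the characteristic $2,3$ concern you flag, since its key input ``$\delta_w \leq 12$ at places of potentially good reduction'' is precisely what can fail in those characteristics.
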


The reader might want to compare this estimate with the one in \cite[Proposition 2.1]{Silverman_HEC} 
where a similar comparison is carried out between differential and modular heights of an elliptic curve over a number field.
The proof in our setting is simplified by the absence of Archimedean places.

		\begin{proof} 
		For any place $w$ of $L$, we let $\delta_w\geq0$ denote the valuation at $w$ of the discriminant 
		of a minimal integral model of $E$ at $w$, 
		and we let  $\theta_w = w( j(E))$ denote the order of the pole/zero of $j(E)$ at $w$.

		By Lemma~\ref{lemm:silverman.hdiff} and the definition of the modular height, we have
		\begin{align*}
		[L:K] \, \hdiff(E/L)
		&= \frac{\deg\Deltamin(E/L)}{12}  = \frac{1}{12}\sum_{w\in M_L } \delta_w \, \deg w, \\
		[L:K] \, \hmod(E)
		&= \sum_{w\in M_L} n_w  \, \max\{0, \log |j(E)|_w\}  = \sum_{w\in M_L}  \max\{0, - \theta_w\} \deg w,
		\end{align*}
		where the sums are supported on the places of bad reduction of $E$. 
		Note indeed that the $j$-invariant has no poles outside places of bad reduction. 
		(Actually, as the table page\ 365 of \cite{ATAEC} shows, poles of $j$  only occur 
		at places of potentially multiplicative reduction.) Hence we have
		\[ 12 [L:K] \, \hdiff(E/L) - [L:K] \, \hmod(E) = \sum_{w\in M_L} \min\{\delta_w, \delta_w+\theta_w\}  \deg w,\] 
		the sum being supported on places where $E$ has bad reduction.
		If $w$ is a place of semi-stable reduction for $E$, then the Kodaira--N\'eron theorem (see \S\ref{ssec:semistab.red}) 
		implies that $\delta_w=-\theta_w$.
		If $w$ is a place of additive reduction, then a quick perusal at the above mentioned table in \cite{ATAEC} shows that 
		$\delta_w+\theta_w$ is contained in $\{0, \dots, 12\}$.  
		Therefore, for any place $w$ of bad reduction, the integer
		$ \min\{\delta_w, \theta_w+\delta_w\}$ lies in $[0,12]$; and it equals~$0$ if $E$ has semi-stable reduction at $w$. 
		We have thus proved that 
		\begin{equation*}
		0\leq 12 [L:K] \, \hdiff(E/L) - [L:K] \, \hmod(E) 
		=  \sum_{w \text{ bad red.}} \min\{\delta_w, \delta_w + \theta_w\} \deg w
		\leq 12 \sum_{w\text{ not s.s.}} \deg w.
		\end{equation*}
		This entails the desired bounds. 
		\end{proof}

It is clear that an elliptic curve $E/L$ is semi-stable if and only if 
the sum on the right-hand side of \eqref{eq.comparaison.hdiff.hmod} vanishes. 
Hence, the above proposition directly implies:

		\begin{coro}\label{coro:comp.hdiff.hmod.semistable}
		Let $E$ be an elliptic curve defined over a finite extension $L$ of $K$.
		Then $E/L$ is semi-stable  if and only if $\hmod(E) = 12 \, \hdiff(E/L)$. 
		In particular, we have $\hmod(E)=12 \, \hstab(E/L)$.
		\end{coro}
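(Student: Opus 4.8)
The plan is to derive both assertions from Proposition~\ref{prop:compare.hdiff.hmod} --- more precisely, from the local identity established in the course of its proof --- together with a description of exactly when each local contribution vanishes.

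\emph{Forward implication.} First I would observe that if $E/L$ is semi-stable then the set of places of $L$ at which $E$ does not have semi-stable reduction is empty, so the right-hand side of \eqref{eq.comparaison.hdiff.hmod} is $0$; the left-hand inequality of \eqref{eq.comparaison.hdiff.hmod} then forces $\hdiff(E/L) - \hmod(E)/12 = 0$, that is $\hmod(E) = 12\,\hdiff(E/L)$.

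\emph{Converse.} Here I would reuse the exact identity obtained inside the proof of Proposition~\ref{prop:compare.hdiff.hmod}, namely
\[ 12[L:K]\,\hdiff(E/L) - [L:K]\,\hmod(E) \;=\; \sum_{w\text{ bad red.}} \min\{\delta_w,\; \delta_w+\theta_w\}\,\deg w, \]
with $\delta_w$ the discriminant valuation and $\theta_w = w(j(E))$ as in that proof. Every summand is a non-negative integer and every $\deg w \geq 1$, so the hypothesis $\hmod(E) = 12\,\hdiff(E/L)$ forces $\min\{\delta_w, \delta_w+\theta_w\} = 0$ for each place $w$ of bad reduction. The point to check --- the only step going beyond what is literally stated in the Proposition, and the one I expect to be the main obstacle --- is that this minimum is \emph{strictly positive} at every place $w$ of additive reduction. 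This follows from the classification of reduction types (e.g.\ the tables on page~365 of \cite{ATAEC}) together with the Kodaira--N\'eron theorem recalled in \S\ref{ssec:semistab.red}: at a place of potentially good reduction one has $\theta_w \geq 0$ and $\delta_w \geq 2$, so the minimum equals $\delta_w \geq 1$; at a place of potentially multiplicative but additive reduction one has $\delta_w + \theta_w \geq 1$, since the equality $\delta_w = -\theta_w$ holds only at places of multiplicative reduction. Consequently $E$ has no place of additive reduction, i.e.\ $E/L$ is semi-stable.

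\emph{The ``in particular'' clause.} Finally I would choose a finite extension $L'/L$ over which $E' := E\times_L L'$ is semi-stable. Since the $j$-invariant is unchanged by base change and the Weil height on $\bar K$ is absolute, $\hmod(E') = h(j(E)) = \hmod(E)$; and $\hdiff(E'/L') = \hstab(E/L)$ by the definition of the stable height, which, as recalled in \S\ref{ssec:hdiff}, is independent of the semi-stabilising extension chosen. Applying the equivalence just proved to the semi-stable curve $E'/L'$ then yields $\hmod(E) = \hmod(E') = 12\,\hdiff(E'/L') = 12\,\hstab(E/L)$, as desired. Everything apart from the positivity of the local terms at additive places is bookkeeping.
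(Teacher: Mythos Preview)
Your argument is correct and is essentially a fleshed-out version of the paper's one-line deduction from Proposition~\ref{prop:compare.hdiff.hmod}. The forward implication and the ``in particular'' clause are handled exactly as intended. For the converse you rightly isolate the one nontrivial point --- that $\min\{\delta_w,\delta_w+\theta_w\}>0$ at every additive place --- which the paper leaves implicit. Your justification there is slightly soft, however: the table on p.~365 of \cite{ATAEC} only covers residue characteristic $\neq 2,3$, and the Kodaira--N\'eron statement recalled in \S\ref{ssec:semistab.red} gives only the implication ``semi-stable $\Rightarrow \delta_w=-\theta_w$'', not its converse, so your sentence ``the equality $\delta_w=-\theta_w$ holds only at places of multiplicative reduction'' is not yet supported by the references you cite. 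A clean characteristic-free way to close this is to observe that $j=c_4^3/\Delta$ gives $\delta_w+\theta_w=3\,w(c_4)$ for the minimal model, and the standard criterion (\cite[Chapter~VII, Proposition~5.1]{AEC}) characterises additive reduction by $w(\Delta)>0$ and $w(c_4)>0$; hence $\delta_w+\theta_w\geq 3$ at any additive place, and the minimum is therefore at least $\min\{2,3\}=2>0$.
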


\subsection{Differential height and conductor}

For any elliptic curve $E$ over a finite extension $L$ of $K$, Ogg's formula 
(see formula (11.1) in \cite[Chapter IV, \S11]{ATAEC}), 
implies that $w(\cond({E/L}))\leq w(\Deltamin(E/L))$ for all places $w\in M_L$. 
Hence we have $\deg\cond(E/L) \leq \deg \Deltamin(E/L)$, and  it follows that 
\[\deg\cond (E/L) \leq 12 [L:K]\, \hdiff(E/L).\]
Obtaining an inequality in the other direction is much harder. 
The result is as follows:

		\begin{theo}[Szpiro's inequality] \label{theo:szpiro.inequality}
		Let $E$ be an elliptic curve defined over a function field $L$ of genus~$g(L)$. We have
		\begin{equation}\label{eq.szpiro.inequality}
		\deg\Deltamin(E/L) \leq 6\, \degins j(E) \, \big(2g(L)-2 + \deg\cond({E/L}) \big),
		\end{equation}	
		where $\cond(E/L)$ denotes the conductor of $E$, and $\degins j(E)$ the inseparability degree of its $j$-invariant.
		\end{theo}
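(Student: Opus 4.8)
The plan is to turn the inequality into a statement about the morphism $j:=j(E)\colon C'\to\P^1$, where $C'$ is a model of $L$ (we may take its constant field $k'$ algebraically closed, since $k$ is perfect and none of $g(L)$, $\deg\Deltamin(E/L)$, $\deg\cond(E/L)$, $\degins j(E)$ is affected by extending the constants), and then to feed it into the Riemann--Hurwitz formula. First, arguing as in the proof of Proposition~\ref{prop:compare.hdiff.hmod} and invoking Lemma~\ref{lemm:silverman.hdiff}, one obtains the identity $\deg\Deltamin(E/L)=\deg j+\sum_{w\in M_L}c_w\deg w$, where $\deg j=[L:K]\hmod(E)=\deg\div_\infty(j)$ is the degree of $j$ as a morphism (see \S\ref{ssec:hmod}) and $c_w:=\min\{\delta_w,\,\delta_w+w(j)\}\geq 0$ vanishes at every place of semi-stable reduction. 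Factoring $j=j_s\circ F_q$, with $F_q$ the $q$-power Frobenius of $C'$ ($q=\degins j(E)$), target $C_1$, and $j_s\colon C_1\to\P^1$ separable of degree $d:=\degsep j(E)$, we get $g(C_1)=g(L)$ and $\deg j=q\,d$. So the task reduces to bounding $d$, and then to checking that $q\,d+\sum_w c_w\deg w\leq 6\,q\bigl(2g(L)-2+\deg\cond(E/L)\bigr)$.

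To bound $d$ I would exploit the classical rigidity of the $j$-map. The coarse moduli space $\P^1_j$ of elliptic curves has a generic automorphism $\Z/2$ and extra automorphisms $\Z/4$ over $j=1728$ and $\Z/6$ over $j=0$; as a consequence, every ramification index of $j_s$ over $j=0$ (resp.\ over $j=1728$) is divisible by $3$ (resp.\ by $2$), with the only exceptions at certain places of additive reduction --- those of Kodaira type $\mathrm{II},\mathrm{IV},\mathrm{IV}^*,\mathrm{II}^*$ over $0$, and of type $\mathrm{III},\mathrm{III}^*$ over $1728$ --- at which, moreover, the precise ramification index is governed by the valuations of $c_4$ and $c_6$ in the N\'eron table; the fibre of $j_s$ over $j=\infty$ is supported on the potentially multiplicative places. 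Plugging these divisibilities into the Riemann--Hurwitz(--Hasse) formula $2g(L)-2=-2d+\sum_{P\in C_1}(e_P-1+\varepsilon_P)$ (in which the wild terms $\varepsilon_P\geq 0$ only help), the contributions of the fibres of $j_s$ over $0$, $1728$, $\infty$ are respectively at least $\tfrac23 d$, $\tfrac12 d$, $d-n_\infty$ up to the exceptional corrections, and since $-2+1+\tfrac23+\tfrac12=\tfrac16$ one gets a bound $d\leq 6\bigl(2g(L)-2+n_\infty+(\text{exceptional terms})\bigr)$ --- the constant $6$ being dictated by this Riemann--Hurwitz arithmetic for the three special points $0,1728,\infty$ of the $j$-line.

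It then remains to combine the two estimates, which is where the real work lies. Using Ogg's formula and the N\'eron classification of special fibres, one checks \emph{type by type} that at each bad place $w$ the sum of $c_w$ with the term produced by $w$ in the bound for $d$ (namely $6$ if $w$ is potentially multiplicative, or $6$ times the local Riemann--Hurwitz deficiency of $j_s$ over $0$ or $1728$ if $w$ is one of the exceptional additive places) is at most $6 f_w$, with equality for types $\mathrm{I}_n$, $\mathrm{I}_n^*$, $\mathrm{IV}^*$, $\mathrm{III}^*$, $\mathrm{II}^*$; summing over all bad places, and using $q\geq 1$, yields $q\,d+\sum_w c_w\deg w\leq 6\,q\bigl(2g(L)-2+\deg\cond(E/L)\bigr)$, as desired. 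In characteristic $0$ one has $q=1$ and $\varepsilon_P\equiv 0$, and the whole argument reduces to a classical statement on elliptic surfaces; the positive-characteristic version, with the inseparability factor, is due to Pesenti and Szpiro. I expect the main obstacle to be exactly this last bookkeeping, and especially the care required to handle the wild ramification and the enlarged automorphism groups occurring in residue characteristics $2$ and $3$ --- it is this case analysis that forces the clean constant $6$.
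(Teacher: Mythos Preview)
The paper does not give its own proof of this theorem: immediately after the statement it simply records that ``the proof in the semi-stable case can be found in \cite{Szpiro90}, and the general case is treated in \cite{PesentiSzpiro}.'' So there is no in-paper argument to compare your proposal against.

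That said, your sketch is essentially the strategy of those cited references: interpret $j(E)$ as a morphism from a model of $L$ to the $j$-line, factor out the Frobenius to get a separable map $j_s$, apply Riemann--Hurwitz to $j_s$, and exploit the rigidity of ramification over $0$, $1728$, $\infty$ to squeeze out the constant $6$. You have also correctly isolated where the genuine difficulty lies. Two points are worth flagging. First, the divisibility claims (``$e_P$ over $0$ is divisible by $3$'', etc.) concern the ramification of $j_s$, not of $j$; since $w(j)=q\cdot e_{j_s,F_q(w)}$, the passage from the Kodaira--N\'eron table (which controls $w(j)$) to $e_{j_s}$ is only straightforward when $q$ is coprime to $6$, i.e.\ away from characteristics $2$ and $3$. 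Second, in characteristic $2$ or $3$ one has $1728=0$, so the two special fibres collapse and the whole ``$-2+1+\tfrac23+\tfrac12=\tfrac16$'' arithmetic must be redone; combined with wild ramification and the larger automorphism groups of $j=0$, this is exactly the content of Pesenti--Szpiro, and it is substantially more than bookkeeping. Your outline is a fair summary of the semi-stable/tame argument, with an honest acknowledgement that the small-characteristic case needs separate treatment.
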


The proof in the semi-stable case can be found in \cite{Szpiro90}, and the general case is treated in \cite{PesentiSzpiro}.
In particular, for an elliptic curve $E/K$, Szpiro's inequality reads:
\begin{equation}
\frac{\deg\cond(E/K) }{12}
\leq  \hdiff(E/K) 
\leq \degins j(E) \, \left(\frac{\deg\cond(E/K)}{2}  +  g(K)-1 \right).
\end{equation}
Hence, for elliptic curves over $K$ whose $j$-invariant is separable, the ``numerical conductor'' $\deg\cond(E/K)$ 
and the differential height $\hdiff(E/K)$ have the same order of magnitude (up to constants depending at most on $g(K)$).

%%%%%%%%%%%%%%%%%%%%%%%%%%%%%%%%%%%%%%%%%%%%%%%%%%%%%%%%%%%%%%%%%%%%%%%%%%%%%%%%%%%%%%%%%%%%%%%%%%%%%%%%%%%%%%%%%%%%%%%%%%%%%%%%%%%%%%%%%%%%%%%%%%%%%%%%
%%%%%%%%%%%%%%%%%%%%%%%%%%%%%%%%%%%%%%%%%%%%%%%%%%%%%%%%%%%%%%%%%%%%%%%%%%%%%%%%%%%%%%%%%%%%%%%%%%%%%%%%%%%%%%%%%%%%%%%%%%%%%%%%%%%%%%%%%%%%%%%%%%%%%%%%
\section{Isogenies between elliptic curves} %%
\label{sec:isogenies}

The goal of this section is to recall a few standard facts about isogenies between elliptic curves, 
as well as prove decomposition results for them. 
The reader is referred to \cite[Chapter III, \S4]{AEC} for further details about isogenies.
We work in the same setting as before: 
we let $k$ be a perfect field,  $C$ be a smooth projective and geometrically irreducible curve over $k$, 
and $K$ be the function field $k(C)$.
We denote the characteristic of $K$ by $p$ (whether it is a prime or $0$), 
and fix an algebraic closure $\bar{K}$ of $K$. All algebraic extensions of $K$ are viewed as sub-extensions of $\bar{K}$.

\subsection{Preliminaries on isogenies}\label{ss:isogenies.prelim}

Let $L\subset\bar{K}$ be a  finite extension of $K$.
Let $E_1$ and $E_2$ be two elliptic curves defined over $L$. 
We denote by $0_{E_1}\in E_1(L)$ and $0_{E_2}\in E_2(L)$ the respective neutral elements of the groups $E_1$ and $E_2$.
An \emph{isogeny} $\varphi: E_1\to E_2$ is a surjective morphism of varieties satisfying $\varphi(0_{E_1})=0_{E_2}$. 
It can then be shown that $\varphi$ is a group morphism $E_1\to E_2$.
(Unless otherwise specified, we do not assume that isogenies are defined over $L$.)
As a morphism between algebraic varieties, $\varphi$ induces a finite embedding of function fields
\[ \varphi^\ast:\bar{K}(E_2)\hookrightarrow \bar{K}(E_1).\]
The \emph{degree of $\varphi$}, denoted by $\deg\varphi$, is defined to be 
the degree of the finite field extension $\bar{K}(E_1)/\varphi^\ast(\bar{K}(E_2))$. 

If the characteristic of $K$ is positive, 
the extension $\bar{K}(E_1)/\varphi^\ast(\bar{K}(E_2))$ may be split into two subextensions, as follows:
we let $M_\varphi$ be the separable closure of $\varphi^\ast(\bar{K}(E_2))$ in $\bar{K}(E_1)$.  
Then $M_\varphi/\varphi^\ast(\bar{K}(E_2))$ is a finite separable extension, whose degree is denoted by $\degsep\varphi$ 
and called the \emph{separable degree of~$\varphi$}.
The degree of the extension  $\bar{K}(E_1)/M_\varphi$ is denoted by $\degins\varphi$ 
and is called the \emph{inseparable degree of~$\varphi$}.
It is clear that  $\degins\varphi$ is a power of the characteristic of $K$, and that we have 
$\deg\varphi=\degsep\varphi \, \degins\varphi$.
\\

Coming back to the general case, and viewing $\varphi : E_1\to E_2$ as a homomorphism of group varieties, 
we may  define its kernel $\ker\varphi=\varphi^{-1}(\{0_{E_2}\}) \subset E_1$ as a a finite group variety over $L$. 
As a group scheme, the kernel of $\varphi$ is reduced if and only if $\varphi$ is separable 
(in which case we do not lose much by identifying the kernel with its set of closed points). 
In general, one can check that the finite Abelian group $(\ker\varphi)(\bar{K})$ has order $\degsep\varphi$. 
A separable isogeny is called \emph{cyclic} if its kernel is a cyclic Abelian group. 

If $\varphi : E_1\to E_2$ is an isogeny, recall that there is a \emph{dual isogeny} $E_2\to E_1$, which we denote by $\hat{\varphi}$.
The compositions $\varphi \circ\hat{\varphi}$ and $\hat{\varphi}\circ \varphi$ are equal 
to the multiplication by $\deg\varphi$ on $E_2$, respectively $E_1$. 
The degrees of $\varphi$ and $\hat{\varphi}$ are equal.
\\

We denote the set of all isogenies $E_1\to E_2$ together with the constant morphism~$0_{E_2}:E_1\to E_2$ by $\Hom(E_1, E_2)$. 
This set is naturally endowed with the structure of a $\Z$-module.
If $E_1$ and $E_2$ are non-isotrivial and isogenous, one can show that 
the $\Z$-module $\Hom(E_1, E_2)$ is torsion-free of rank $1$ (since they have no CM, by \S\ref{ssec:CM.isotriv}); 
thus, there exists an isogeny $\varphi_0:E_1\to E_2$ so that $\Hom(E_1, E_2) = \Z \, \varphi_0$. 
Such a~$\varphi_0$ is unique up to multiplication by $[\pm1]$, and is characterised (up to sign) by the fact that 
it has minimal degree among all non-zero elements of $\Hom(E_1, E_2)$.
\\

Let us now assume that $K$ has positive characteristic $p$, and let $E$ be an elliptic curve over $L$. 
For any power $q$ of $p$, we write $E^{(q)}$ for the $q$-th power Frobenius twist of $E$: 
if $E$ is defined by a Weierstrass model with coefficients $a_1, a_2, a_3, a_4, a_6\in L$, 
then $E^{(q)}$ is the elliptic curve given by the Weierstrass model with coefficients $a_1^q, a_2^q, a_3^q, a_4^q,  a_6^q$
(note that $j(E^{(q)})=j(E)^q$).
Recall that there is a \emph{$q$-th power Frobenius morphism} $\Frob{q}:E\to E^{(q)}$ which is defined over $L$, and that 
this morphism is an isogeny of degree $q$.
As such, it admits a dual isogeny $\Vers{q}:E^{(q)}\to E$ which is called the \emph{$q$-th power Verschiebung isogeny}, 
and is also defined over $L$.
The multiplication-by-$q$ map $[q]:E\to E$ then decomposes as $[q] = \Vers{q}\circ \Frob{q}$.

If $E$ is non-isotrivial, it is known that $\Frob{q}:E\to E^{(q)}$ is purely inseparable of degree $q$ 
(that is, we have $\degsep\Frob{q} = 1$ and $\degins\Frob{q} = q$) and that its dual $\Vers{q}:E^{(q)} \to E$ is separable of degree $q$. 
Note that, since $E$ is non-isotrivial, it is \emph{ordinary} in the sense that: $E[q]\simeq \Z/q\Z$ for any power $q$ of $p$, 
where $E[q]$ denotes the subgroup of $q$-torsion elements of $E(\bar{K})$.
These facts follow from  \cite[Chapter V, \S3]{AEC}.
\\

Finally, we recall three results concerning isogenies between elliptic curves.
		\begin{prop}\label{prop:decomp.isog.pi.sep} 
		Let $E_1, E_2$ be two elliptic curves defined over a field $L$, whose characteristic $p$ is positive,  
		and let $\varphi : E_1\to E_2$ be an isogeny between them. 
		If one writes $q=\degins\varphi$, 
		then the isogeny $\varphi$ factors as $\varphi =\psi \circ \Frob{q}$ 
		where $\Frob{q}:E_1\to E_1^{(q)}$ is the $q$-th power Frobenius isogeny and $\psi : E_1^{(q)}\to E_2$ is a separable isogeny. 
		\end{prop}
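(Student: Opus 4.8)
The plan is to reduce the claim to the standard fact that a purely inseparable isogeny of degree $q$ (a power of $p$) from a curve $E_1$ is, up to isomorphism, the $q$-th power Frobenius $\Frob{q}:E_1\to E_1^{(q)}$, and that the Frobenius always ``splits off'' from the inseparable part of a general isogeny. Concretely, first I would factor the field extension $\bar{K}(E_1)/\varphi^\ast\bar{K}(E_2)$ through its separable closure $M_\varphi$, as already done in \S\ref{ss:isogenies.prelim}: this expresses $\varphi$ as a composition of a purely inseparable map $E_1\to E'$ of degree $q=\degins\varphi$ followed by a separable map $E'\to E_2$, where $E'$ is the (unique up to isomorphism) curve with $\bar{K}(E')\cong M_\varphi$. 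Equivalently, one can argue more directly via kernels: the connected component $(\ker\varphi)^0$ of the group-scheme kernel has order $q$, it is a normal subgroup scheme of $E_1$, and the quotient isogeny $E_1\to E_1/(\ker\varphi)^0$ is purely inseparable of degree $q$, while $\varphi$ factors through it by an isogeny with \'etale kernel, hence separable.

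Next I would identify the purely inseparable part with Frobenius. Over a perfect base this is \cite[Chapter II, Corollary 2.12]{AEC}, already cited in \S\ref{ssec:Insepdeg} for morphisms of curves: any purely inseparable morphism of degree $q$ from a smooth curve $X$ factors as $X\to X^{(q)}$ (the $q$-th power Frobenius) followed by an isomorphism onto its image. Applying this to the degree-$q$ purely inseparable isogeny $\lambda:E_1\to E'$ gives an isomorphism $E_1^{(q)}\xrightarrow{\sim} E'$ through which $\lambda = \iota\circ\Frob{q}$ with $\Frob{q}:E_1\to E_1^{(q)}$ the Frobenius isogeny. Since $\iota$ is an isomorphism sending the origin to the origin (both $\lambda$ and $\Frob{q}$ are isogenies, hence respect the neutral elements), composing $\iota$ into the separable map yields the desired factorisation $\varphi = \psi\circ\Frob{q}$ with $\psi:E_1^{(q)}\to E_2$ separable; its separable degree is $\degsep\varphi$, hence $\deg\psi = \degsep\varphi = \deg\varphi/q$ as a sanity check.

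The one point requiring slight care — and the main (minor) obstacle — is to make sure the intermediate object $E'$ is genuinely an elliptic curve and that $E_1\to E'$ and $E'\to E_2$ are isogenies in the sense used here, i.e. morphisms of varieties taking origin to origin (equivalently, group morphisms). For the group-scheme-quotient argument this is automatic: $E_1/(\ker\varphi)^0$ is an elliptic curve, the quotient map is an isogeny by construction, and the induced map to $E_2$ is an isogeny because it is a surjective morphism compatible with the origins. For the function-field argument one instead invokes that a smooth projective curve of genus $1$ with a rational point is an elliptic curve, and that the maps respect origins because they do so on a dense open set (they are restrictions/factors of $\varphi$). Either way the verification is routine and short, so I would simply spell out the kernel version and then quote \cite[Chapter II, Corollary 2.12]{AEC} to finish. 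No transcendence input or characteristic-specific subtlety beyond $\degins\varphi$ being a power of $p$ is needed.
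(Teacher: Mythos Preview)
Your proposal is correct and aligns with the paper's treatment: the paper does not give an independent proof of this proposition but simply refers to \cite[Chapter II, Corollary 2.12]{AEC}, which is exactly the reference you invoke as the key step. Your write-up merely unpacks the argument behind that corollary (via the separable closure of the function-field extension, or equivalently the connected component of the kernel) in more detail than the paper does.
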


		\begin{prop}\label{prop:decomp.isog.factor}
		Let $E_1, E_2, E_3$ be elliptic curves defined over a field $L$.
		Let $\varphi :E_1\to E_2$ be a separable isogeny, and $\psi :E_1\to E_3$ be an isogeny.
		If $\ker\varphi\subseteq\ker\psi$, then $\psi$ factors uniquely through $\varphi$ \ie{}, there is a unique isogeny $\lambda:E_2\to E_3$ such that $\psi = \lambda\circ\varphi$.
		\end{prop}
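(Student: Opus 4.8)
The plan is to realise the separable isogeny $\varphi$ as a quotient map and to invoke the associated universal property.

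First I would record the standard description of a separable isogeny as a quotient (see \cite[Chapter III, \S4]{AEC}): since $\varphi:E_1\to E_2$ is separable, its kernel $\Phi:=\ker\varphi$ is a reduced finite subgroup of $E_1$ of order $\deg\varphi$, the translation action of $\Phi$ on $E_1$ induces an action on $\bar{L}(E_1)$ making $\bar{L}(E_1)/\varphi^\ast\bar{L}(E_2)$ a Galois extension with group $\Phi$, and $\varphi$ identifies $E_2$ with the quotient $E_1/\Phi$. What I actually need is the resulting universal property: any morphism of group varieties $\alpha:E_1\to G$ with $\Phi\subseteq\ker\alpha$ factors as $\alpha=\beta\circ\varphi$ for some morphism $\beta$ on $E_2$. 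Field-theoretically: for $P\in\Phi$ the translation $\tau_P$ on $E_1$ satisfies $\alpha\circ\tau_P=\alpha$, since $\alpha(x+P)=\alpha(x)+\alpha(P)=\alpha(x)$, so $\alpha^\ast\bar{L}(G)$ lies in the fixed field $\bar{L}(E_1)^\Phi=\varphi^\ast\bar{L}(E_2)$; the induced inclusion $\bar{L}(G)\into\bar{L}(E_2)$ then corresponds to the sought-for morphism $\beta:E_2\to G$.

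Next I would apply this with $\alpha=\psi$. The hypothesis $\ker\varphi=\Phi\subseteq\ker\psi$ is exactly what the universal property requires, so there is a morphism $\lambda:E_2\to E_3$ with $\psi=\lambda\circ\varphi$. Uniqueness of $\lambda$ as a morphism (a fortiori as an isogeny) is immediate: two such $\lambda$'s agree on $\varphi(E_1)=E_2$ because $\varphi$ is surjective. Then I would check that $\lambda$ is an isogeny: evaluating at the origin gives $\lambda(0_{E_2})=\lambda(\varphi(0_{E_1}))=\psi(0_{E_1})=0_{E_3}$, so $\lambda$ sends the neutral element of $E_2$ to that of $E_3$; and $\lambda$ is surjective because $\psi=\lambda\circ\varphi$ is surjective, $\psi$ being an isogeny. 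A surjective morphism of elliptic curves respecting the origin is an isogeny (and automatically a group homomorphism), which completes the argument.

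I do not expect a genuine obstacle here: the only real input is the classical quotient/universal-property statement for separable isogenies, and everything else is formal. The one point deserving a little care is to make sure the factoring object $\lambda$ is produced as an honest morphism of varieties — not merely a set-theoretic map or a field embedding — and that it is surjective; both follow from the surjectivity of $\varphi$ and of $\psi$, as indicated above.
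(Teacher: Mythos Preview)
Your argument is correct and is essentially the standard proof from \cite[Chapter III, Corollary 4.11]{AEC}, which is exactly what the paper cites rather than giving an independent proof. The paper simply refers the reader to Silverman for this statement, and your field-theoretic derivation via the Galois action of $\Phi$ on $\bar{L}(E_1)$ is that argument.
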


		\begin{prop}\label{prop:isog.quotient}
		Let $E$ be an elliptic curve defined over a field $L$, and let $G$ be a finite subgroup of $E$.
		Then there exist a unique elliptic curve $E'$ defined over $\bar{L}$ and a separable isogeny $\pi: E \to E'$ such that $\ker\pi=G$.
		The curve $E'$ is usually denoted by $E/G$.
		\end{prop}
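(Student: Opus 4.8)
The plan is to construct $E'$ as the quotient curve $E/G$ via the classical function-field argument (compare \cite[Chapter III, \S4]{AEC}), organised so that the genus of the quotient is read off from the Riemann--Hurwitz formula. Since a separable isogeny with kernel $G$ can exist only if $G$ is \'etale, I identify $G$ with the corresponding finite subgroup of $E(\bar{L})$ and work over $\bar{L}$ throughout. Each point $Q\in G$ gives a translation $\tau_Q:E\to E$, $P\mapsto P+Q$, which is an automorphism of $E$ as a $\bar{L}$-variety and has no fixed point when $Q\neq 0_E$; hence $Q\mapsto\tau_Q$ embeds $G$ into $\mathrm{Aut}_{\bar{L}}(E)$ as a group acting \emph{freely}, and correspondingly into the group of $\bar{L}$-automorphisms of the function field $\bar{L}(E)$.

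First I would pass to the fixed field $F:=\bar{L}(E)^{G}$. By Artin's theorem the extension $\bar{L}(E)/F$ is finite Galois of degree $|G|$ with group $G$; in particular it is \emph{separable}. As $\bar{L}$ is algebraically closed, $F$ is the function field of a smooth projective geometrically irreducible curve $E'/\bar{L}$, unique up to isomorphism, and the inclusion $F\hookrightarrow\bar{L}(E)$ corresponds to a finite separable morphism $\pi:E\to E'$ of degree $|G|$, defined over $\bar{L}$. The fibres of $\pi$ are exactly the $G$-orbits in $E$; the action being free, every fibre has exactly $|G|$ points, so $\pi$ is unramified, and Riemann--Hurwitz gives $0=2g(E)-2=|G|\bigl(2g(E')-2\bigr)$, whence $g(E')=1$. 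Declaring $0_{E'}:=\pi(0_E)$ to be the origin turns $E'$ into an elliptic curve and $\pi$ into an isogeny (surjective and origin-preserving, hence a group homomorphism), separable by construction; its kernel is the fibre $\pi^{-1}(0_{E'})$, i.e. the $G$-orbit of $0_E$, which is $G$. This proves existence.

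For uniqueness, let $\pi':E\to E''$ be any separable isogeny with $\ker\pi'=G$. Since $\ker\pi=G\subseteq\ker\pi'$, Proposition~\ref{prop:decomp.isog.factor} produces a unique isogeny $\lambda:E'\to E''$ with $\pi'=\lambda\circ\pi$; comparing degrees gives $\deg\lambda\cdot\deg\pi=\deg\pi'$, and since $\pi,\pi'$ are separable with kernel $G$ we have $\deg\pi=\deg\pi'=|G|$, so $\deg\lambda=1$ and $\lambda$ is an isomorphism. Thus $(E',\pi)$ is unique up to isomorphism. The one point needing genuine care — and the place where the group law really enters — is the verification that translation-by-$Q$ has no fixed point for $Q\neq 0_E$, so that the $G$-action on $\bar{L}(E)$ is free and Artin's theorem applies with $\pi$ unramified; everything else is formal. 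Alternatively one could invoke the general existence of the quotient of an abelian variety by a finite \'etale subgroup scheme (see \cite{MB}), which immediately yields a separable isogeny $E\to E/G$ of the required type; the route sketched above has the merit of being self-contained and of keeping the field of definition under control.
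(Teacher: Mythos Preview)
The paper does not supply its own proof here; it simply refers to \cite[Chapter~III, Proposition~4.12]{AEC}. Your argument is precisely the proof given there --- quotienting by the translation action via Artin's theorem on the fixed field, reading off $g(E')=1$ from Riemann--Hurwitz using that the free action makes $\pi$ unramified, and invoking Proposition~\ref{prop:decomp.isog.factor} for uniqueness --- so you have faithfully reconstructed the cited proof.
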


These three statements and their proofs can be found in  \cite{AEC}: 
see Corollary~2.12 in Chapter II,
Corollary~4.11 in Chapter III, 
and Proposition~4.12 in Chapter III, respectively.

\subsection{Biseparable isogenies} 

It will be convenient to consider the following class of isogenies separately:
 	\begin{defi}\label{defi:ds.isog}
	Let $\varphi:E\to E'$ be an isogeny between two elliptic curves $E$, $E'$ defined over $\bar{K}$. 
	We will say that $\varphi$ is \emph{biseparable} if both~$\varphi$ and its dual $\hat{\varphi}$ are separable. 
	We will say that two elliptic curves $E, E'$ over~$\bar{K}$ are \emph{biseparably isogenous} if 
	there exists a biseparable isogeny between them.
	\end{defi}

If $K$ has characteristic $0$, all isogenies between elliptic curves over $\bar{K}$ are of course biseparable. 
In positive characteristic, such isogenies may be characterised as follows: 
		\begin{lemm}\label{lemm:isog.ds.degree}
		Let $E$ and $E'$ be two elliptic curves over $\bar{K}$ and let $\varphi:E\to E'$ be an isogeny.
		Then $\varphi$ is biseparable if and only if $\deg\varphi$ is coprime to the characteristic of $K$. 
		\end{lemm}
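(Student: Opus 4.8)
The plan is to prove both implications by relating biseparability to the factorisation of isogenies through Frobenius isogenies, using Proposition~\ref{prop:decomp.isog.pi.sep} and the basic properties of the inseparable degree recalled in \S\ref{ss:isogenies.prelim}. The key observation is that, since $E$ and $E'$ are non-isotrivial (they are isogenous, so both are isotrivial or both are not; the interesting case is the non-isotrivial one), the Frobenius isogeny $\Frob{q}:E\to E^{(q)}$ is \emph{purely} inseparable of degree $q$, while its dual, the Verschiebung $\Vers{q}$, is separable of degree $q$. Recall also that $\deg\varphi=\degsep\varphi\cdot\degins\varphi$ and that $\degins\varphi$ is a power of $p$; likewise for $\hat\varphi$. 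Since $\deg\varphi=\deg\hat\varphi$, writing $\deg\varphi=p^a\cdot m$ with $p\nmid m$ we always have $\degins\varphi\cdot\degins\hat\varphi$ dividing $p^{2a}$, and the content of the lemma is that in fact $\degins\varphi\cdot\degins\hat\varphi=p^{2a}$ exactly, so that $\varphi$ is biseparable (both factors equal to $1$) precisely when $a=0$.

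First I would dispose of the characteristic $0$ case trivially, then assume $K$ has characteristic $p>0$. For the easy direction, suppose $\deg\varphi$ is coprime to $p$. Then $\degins\varphi$, being a power of $p$ dividing $\deg\varphi$, must equal $1$, so $\varphi$ is separable; and $\deg\hat\varphi=\deg\varphi$ is also coprime to $p$, so by the same argument $\hat\varphi$ is separable. Hence $\varphi$ is biseparable. For the converse, suppose $p\mid\deg\varphi$; I want to show $\varphi$ or $\hat\varphi$ is inseparable. Write $q=\degins\varphi$. If $q>1$ we are done, so assume $\varphi$ is separable. By Proposition~\ref{prop:decomp.isog.pi.sep} applied to $\hat\varphi$, write $r=\degins\hat\varphi$ and $\hat\varphi=\psi\circ\Frob{r}$ with $\psi$ separable; then $\varphi=\widehat{\hat\varphi}=\widehat{\Frob{r}}\circ\hat\psi=\Vers{r}\circ\hat\psi$ up to the standard identifications. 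Dualising the relation $\varphi\circ\hat\varphi=[\deg\varphi]$ and tracking inseparable degrees: $\degins([\deg\varphi])=\degins\varphi\cdot\degins\hat\varphi=1\cdot r=r$. On the other hand $[\deg\varphi]=[p^a m]$ factors as $[m]\circ[p]^{\,a}=[m]\circ(\Vers p\circ\Frob p)^{\,a}$, and since $[m]$ is separable (as $p\nmid m$, the $m$-torsion group scheme is étale) while each $\Frob p$ contributes a factor $p$ to the inseparable degree and each $\Vers p$ is separable, we get $\degins([\deg\varphi])=p^a$. Therefore $r=p^a>1$, so $\hat\varphi$ is inseparable and $\varphi$ is not biseparable, as desired.

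The one point that needs a little care — and the step I expect to be the main technical obstacle — is justifying that $\degins([n]\circ[n'])=\degins[n]\cdot\degins[n']$ and, more basically, that for an elliptic curve $E$ (non-isotrivial, hence ordinary) one has $\degins[p]=p$ exactly, i.e. $[p]=\Vers p\circ\Frob p$ with $\Frob p$ purely inseparable of degree $p$ and $\Vers p$ separable of degree $p$. This is precisely the ordinariness statement recalled at the end of \S\ref{ss:isogenies.prelim} (following \cite[Chapter V, \S3]{AEC}): $E[p^a]\simeq\Z/p^a\Z$, so $[p^a]$ has separable degree $p^a$ and hence inseparable degree $p^a$ as well, since $\deg[p^a]=p^{2a}$. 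Multiplicativity of $\degins$ along compositions is immediate from multiplicativity of the inseparable degree in towers of function-field extensions. With these ingredients the computation $\degins([\deg\varphi])=p^a$ is unambiguous and the proof concludes.

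Alternatively, and perhaps more cleanly, one can argue purely with the identity $\degins\varphi\cdot\degins\hat\varphi=\degins([\deg\varphi])$ obtained by taking inseparable degrees on both sides of $\hat\varphi\circ\varphi=[\deg\varphi]$, combined with $\degins[n]=p^{v_p(n)^{\phantom{|}}}$ for $n\geq 1$ on a non-isotrivial $E$ (where $v_p$ is the $p$-adic valuation): then $\varphi$ biseparable $\iff\degins\varphi=\degins\hat\varphi=1\iff\degins\varphi\cdot\degins\hat\varphi=1\iff v_p(\deg\varphi)=0\iff p\nmid\deg\varphi$, where for the second equivalence one uses that both $\degins\varphi$ and $\degins\hat\varphi$ are positive powers of $p$ whenever either exceeds $1$, together with the fact that their product being $1$ forces both to be $1$. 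I would present the proof in whichever of these two forms reads most smoothly alongside the surrounding material, but the substance is the same: biseparability is governed entirely by the $p$-adic valuation of $\deg\varphi$, via the ordinariness of non-isotrivial curves.
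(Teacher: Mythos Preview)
Your approach is essentially the same as the paper's: both rely on the identity $\hat\varphi\circ\varphi=[d]$ with $d=\deg\varphi$, the multiplicativity of the inseparable degree under composition, and a computation of $\degins[d]$. The paper's argument is organised slightly differently (for the direction $(d,p)=1\Rightarrow$ biseparable, it argues that $[d]$ is separable and factors through $\varphi$, hence $\varphi$ is separable), but this is cosmetic.

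There is, however, a genuine gap. The lemma is stated for \emph{arbitrary} elliptic curves over $\bar{K}$, with no non-isotriviality hypothesis, yet you dismiss the isotrivial case as ``not interesting'' and thereafter rely on ordinariness: you use that $\Vers{p}$ is separable and that $\degins[p^a]=p^a$, both of which fail for supersingular curves (where $\Vers{p}$ is purely inseparable and $\degins[p^a]=p^{2a}$). The paper handles this correctly by writing $\degins[p^r]\in\{p^r,p^{2r}\}$, which covers both cases and is all that is needed: the only fact used downstream is that $\degins[d]=1\iff r=0$. Your argument is easily repaired in the same way---replace the exact formula $\degins[n]=p^{v_p(n)}$ by the weaker statement $\degins[n]\in\{p^{v_p(n)},p^{2v_p(n)}\}$, or simply $\degins[n]=1\iff p\nmid n$---but as written it does not cover the isotrivial (potentially supersingular) case that the lemma allows.
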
	
		\begin{proof}
		Let $p>0$ denote the characteristic of $K$. 
		We factor the degree $d:=\deg\varphi$ as $d=p^r d'$, with $r\geq 0$ and $d'\in\Z$  coprime to $p$.
		By construction of the dual isogeny, we have 
		$\hat{\varphi}\circ\varphi = [d] = [d']\circ[p^r]$, where $[n]:E\to E$ denotes the multiplication-by-$n$ map on $E$.
		Now, the multiplication-by-$p^r$ map $[p^r]$ on $E$ is inseparable of degree $p^{2r}$ 
		(with inseparability degree  $\degins[p^r]\in\{p^r,p^{2r}\}$ depending on whether~$E$ is ordinary or not);
		and, since $d'$ is coprime to $p$, the map $[d']:E\to E$ is separable. 
		All in all, the inseparability degree of the map $\hat{\varphi}\circ\varphi = [d]:E\to E$ satisfies 
		$\degins[d]\in\{p^r,p^{2r}\}$.
		
		Now assume that $\varphi$ is biseparable of degree $d=p^rd'$.
		Since both $\varphi$ and $\hat{\varphi}$ are separable, their composition is separable. 
		Hence the multiplication-by-$d$ map on $E$ is separable.
		By the previous paragraph, we must have $r=0$. 
		Therefore, the degree of $\varphi$ 	is coprime to $p$.
		
		Conversely, assume  that the degree $d$ of $\varphi$ is coprime to $p$. 
		Then $\varphi$ must be separable because the map $[d]:E\to E$ is separable and factors through $\varphi$. 
		Since the dual of $\varphi$ has degree $\deg\hat{\varphi}=d$,
		the  same argument shows that $\hat{\varphi}$ is also separable.
		\end{proof}

We also note the following:
		\begin{lemm}\label{lemm:kernel.sep}
		Let $K$ be a function field as above, and $E_1, E_2$ be two non-isotrivial elliptic curves 
		over a finite extension~$L$ of  $K$.
		Let $\varphi:E_1\to E_2$ be a biseparable isogeny between them.
		Then the kernel $H=(\ker\varphi)(\bar{K})$ of $\varphi$ is defined over a finite separable extension of $L$, 
		that is, the extension $L(H)/L$ is separable.
		\end{lemm}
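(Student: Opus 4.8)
The plan is to realise $H=(\ker\varphi)(\bar K)$ as a subgroup of the torsion of $E_1$ that is stable under a Galois-type construction, and to exploit the fact that $\varphi$ being biseparable forces $\deg\varphi$ to be coprime to $p$. By Lemma~\ref{lemm:isog.ds.degree}, write $d=\deg\varphi$, so $p\nmid d$. Since $\varphi$ is separable, $H$ is a finite abelian group of order $d$, and because $\hat\varphi\circ\varphi=[d]$ we have $H\subseteq E_1[d](\bar K)$. As $p\nmid d$, the multiplication-by-$d$ map $[d]:E_1\to E_1$ is separable (this is already used in the proof of Lemma~\ref{lemm:isog.ds.degree}), so $E_1[d](\bar K)$ has order $d^2$ and, more to the point, every point of $E_1[d]$ is a simple point of the (reduced) scheme $\ker[d]$. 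The first step is therefore to observe that $H$ consists of $\bar K$-points lying on the étale $L$-group scheme $E_1[d]$, i.e. $H\subseteq E_1[d](L^{\mathrm{sep}})$.

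The key point is that $\ker[d]$ is étale over $L$ precisely because $d$ is prime to $p$: the separable isogeny $[d]$ has reduced kernel, and a finite reduced group scheme over a field all of whose geometric points are defined over the separable closure is exactly an étale group scheme; concretely, the coordinates of the $d$-torsion points are roots of the division polynomials, which are separable polynomials when $p\nmid d$. Hence $L(E_1[d])/L$ is a finite \emph{separable} extension. The second step is then purely formal: $H$ is a finite subset of $E_1[d](L^{\mathrm{sep}})$, so every coordinate of every point of $H$ already lies in the separable extension $L(E_1[d])$, and therefore $L(H)\subseteq L(E_1[d])$ is itself separable over $L$.

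I would write this up in three short moves: (i) invoke Lemma~\ref{lemm:isog.ds.degree} to get $p\nmid\deg\varphi$ and note $H\subseteq E_1[\deg\varphi]$ via $\hat\varphi\circ\varphi=[\deg\varphi]$; (ii) recall that $[\deg\varphi]$ is separable when $\deg\varphi$ is prime to $p$ (Lemma~\ref{lemm:isog.ds.degree}'s proof, or \cite[Ch.~III]{AEC}), so its kernel group scheme $E_1[\deg\varphi]$ is finite étale over $L$, whence $L(E_1[\deg\varphi])/L$ is separable; (iii) conclude that $L(H)$, being contained in $L(E_1[\deg\varphi])$, is separable over $L$. The only point that needs a little care — and the place I'd expect a referee to look — is step (ii): one must genuinely use that a finite reduced commutative group scheme over a field is étale (equivalently, that the $d$-division polynomial is separable when $p\nmid d$), rather than merely that $[\deg\varphi]$ has the ``right'' number of geometric points. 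Everything else is bookkeeping. Note that non-isotriviality of $E_1,E_2$ is not really needed for this argument beyond making the statement's hypotheses consistent with the rest of the section; the coprimality of $\deg\varphi$ to $p$ is what does all the work.
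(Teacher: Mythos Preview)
Your proposal is correct and follows essentially the same route as the paper's proof: both use Lemma~\ref{lemm:isog.ds.degree} to get $p\nmid d=\deg\varphi$, embed $H$ into $E_1[d]$ via $\hat\varphi\circ\varphi=[d]$, argue that $L(E_1[d])/L$ is separable (the paper cites \cite{SerreTate} and \cite{BanLonVig} for this, while you spell out the \'etaleness of $\ker[d]$), and conclude that the subextension $L(H)/L$ is separable. Your closing remark that non-isotriviality plays no real role here is also accurate.
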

		\begin{proof} 
		Let $d$ be the degree of $\varphi$,  
		$[d]:E_1\to E_1$ denote the multiplication-by-$[d]$ map on $E_1$, 
		and $E_1[d]$ be the subgroup of $E_1(\bar{K})$ formed by $d$-torsion points.
		We have $H = (\ker \varphi)(\bar{K}) \subset (\ker[d])(\bar{K}) = E_1[d]$, so that $L(H)$ is a subfield of $L(E_1[d])$.
		It thus suffices to prove that the extension $L(E_1[d])/L$ is separable.
		We know by Lemma~\ref{lemm:isog.ds.degree} that~$d$ is coprime to the characteristic of $L$.
		It is then known (see \cite[\S1]{SerreTate} or \cite[Proposition 3.8]{BanLonVig} for instance) 
		that $E_1[d]$ is contained in $E_1(L\sep)$, where $L\sep$ denotes the separable closure of~$L$ in $\bar{K}$.  
		The extension $L(E_1[d])/L$ is therefore separable, and the lemma is proved.
		\end{proof}

\subsection{Useful decompositions of isogenies} %%

Let $K$ be a function field in the above sense. 
In this subsection, we assume that the characteristic $p$ of $K$ is positive.

Let $E_1, E_2$ be two non-isotrivial elliptic curves defined over $\bar{K}$, 
and let $\varphi:E_1\to E_2$ be an isogeny between them. 
By Proposition \ref{prop:decomp.isog.pi.sep}, one can decompose $\varphi$ as
\[E_1\xrightarrow{\Frob{p^e}} E_1^{(p^e)} \xrightarrow{\psi} E_2,\]
where $\Frob{p^e}$ denotes the $p^e$-th power Frobenius isogeny, and $\psi$ is a separable isogeny.
Since $\psi$ is separable, we see that $\degins\varphi= \degins \Frob{p^e}=p^e$. 
Let us now consider the dual isogeny $\hat{\psi} : E_2\to E_1^{(p^e)}$ to $\psi$.
Using the same Proposition \ref{prop:decomp.isog.pi.sep}, 
we obtain that $\hat{\psi}$ factors as
\[E_2\xrightarrow{\Frob{p^f}} E_2^{(p^f)} \xrightarrow{\psi'} E_1^{(p^e)},\]
where $\psi'$ is a separable isogeny, and where $p^f = \degins \hat{\varphi}$. 
We thus have $\hat{\psi} = \psi'\circ\Frob{p^f}$. 
By contravariance of taking duals, we deduce that $\psi =\widehat{\widehat{\psi}} = \widehat{\Frob{p^f}}\circ\widehat{\psi'}$.
By definition, the dual of $\Frob{p^f}$ is the Verschiebung isogeny $\Vers{p^f} : E_2^{(p^f)}\to E_2$.
Since $E_2$ is non-isotrivial, $\Vers{p^f}$ is a separable isogeny of degree~$p^f$.
The isogeny $\varphi_s:=\hat{\psi'}$ is separable, because both $\psi = \Vers{p^f}\circ\varphi_s$ and $\Vers{p^f}$ are 
(by multiplicativity of the inseparability degree in compositions).
In particular, $\varphi_s$ is a separable isogeny whose dual $\hat{\varphi_s} =\psi'$ is also separable.

We have therefore decomposed our original isogeny $\varphi:E_1\to E_2$ as a composition
\[ E_1 \xrightarrow{\Frob{p^e}} E_1^{(p^e)} \xrightarrow{\varphi_s} E_2^{(p^f)} \xrightarrow{\Vers{p^f}} E_2,\]
where $p^e=\degins\varphi$ and $p^f =\degins\hat{\varphi}$, and where $\varphi_s$ is biseparable. 

This discussion proves the following decomposition result:
		\begin{prop}\label{prop:decomposition.isogeny}
		Let $E_1, E_2$ be two non-isotrivial isogenous elliptic curves  over $\bar{K}$, 
		and let $\varphi : E_1\to E_2$ be an isogeny between them.
		The isogeny $\varphi$ factors as
		\[ E_1 
		\xrightarrow{\ \Frob{p^e}\ } E_1^{(p^e)} 
		\xrightarrow{\ \psi\ } E_2^{(p^f)} 
		\xrightarrow{\ \Vers{p^f}\ } E_2, \]
		where $\psi$ is a biseparable isogeny, $p^e = \degins\varphi$ and $p^f= \degins\hat{\varphi}$.	
		\end{prop}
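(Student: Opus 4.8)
The plan is to apply Proposition~\ref{prop:decomp.isog.pi.sep} twice --- once to $\varphi$ itself and once to an auxiliary dual isogeny --- and then to dualise back, keeping track throughout of the inseparability degrees via their multiplicativity under composition and the contravariance $\widehat{g\circ f}=\hat{f}\circ\hat{g}$ of duality. The exponents $p^e$ and $p^f$ in the statement will be read off as $\degins\varphi$ and $\degins\hat{\varphi}$ along the way, using the facts recalled in \S\ref{ss:isogenies.prelim} about the behaviour of power Frobenius and Verschiebung on non-isotrivial curves.

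First I would apply Proposition~\ref{prop:decomp.isog.pi.sep} to $\varphi$: setting $p^e=\degins\varphi$, it gives a factorisation $\varphi=\psi\circ\Frob{p^e}$ with $\Frob{p^e}:E_1\to E_1^{(p^e)}$ and $\psi:E_1^{(p^e)}\to E_2$ separable. Since $E_1$ is non-isotrivial, $\Frob{p^e}$ is purely inseparable of degree $p^e$ while its dual $\Vers{p^e}$ is separable; dualising $\varphi=\psi\circ\Frob{p^e}$ gives $\hat{\varphi}=\Vers{p^e}\circ\hat{\psi}$, whence $\degins\hat{\varphi}=\degins\hat{\psi}$. Next I would feed $\hat{\psi}:E_2\to E_1^{(p^e)}$ back into Proposition~\ref{prop:decomp.isog.pi.sep}: with $p^f:=\degins\hat{\psi}=\degins\hat{\varphi}$, this produces $\hat{\psi}=\psi'\circ\Frob{p^f}$, where $\Frob{p^f}:E_2\to E_2^{(p^f)}$ and $\psi':E_2^{(p^f)}\to E_1^{(p^e)}$ is separable.

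Finally I would dualise this last identity: using that $\widehat{\Frob{p^f}}=\Vers{p^f}:E_2^{(p^f)}\to E_2$ is separable of degree $p^f$ (because $E_2$ is non-isotrivial), one gets $\psi=\widehat{\hat{\psi}}=\Vers{p^f}\circ\widehat{\psi'}$. Putting $\psi_0:=\widehat{\psi'}:E_1^{(p^e)}\to E_2^{(p^f)}$, the relation $\psi=\Vers{p^f}\circ\psi_0$, the separability of $\psi$, and the identity $\degins(\Vers{p^f}\circ\psi_0)=\degins\Vers{p^f}\cdot\degins\psi_0$ force $\degins\psi_0=1$, so $\psi_0$ is separable; and $\widehat{\psi_0}=\psi'$ is separable by construction, so $\psi_0$ is biseparable. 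Substituting back gives $\varphi=\psi\circ\Frob{p^e}=\Vers{p^f}\circ\psi_0\circ\Frob{p^e}$, which is the claimed factorisation with biseparable middle isogeny $\psi_0$, $p^e=\degins\varphi$ and $p^f=\degins\hat{\varphi}$. I do not anticipate a genuine obstacle here: the whole argument is bookkeeping, and the only point demanding care is to keep the source and target curves of each arrow straight and to invoke consistently the multiplicativity of $\degins$ together with the statements of \S\ref{ss:isogenies.prelim}.
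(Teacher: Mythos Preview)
Your proposal is correct and follows essentially the same route as the paper's own argument: apply Proposition~\ref{prop:decomp.isog.pi.sep} to $\varphi$, then to the dual $\hat{\psi}$, dualise back, and check biseparability of the middle factor via multiplicativity of $\degins$. If anything, you are slightly more explicit than the paper in justifying $p^f=\degins\hat{\varphi}$ by first writing $\hat{\varphi}=\Vers{p^e}\circ\hat{\psi}$ and invoking separability of $\Vers{p^e}$.
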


The above statement will be used repeatedly in the remainder of the article. 
Let us also prove now the following factorisation result: 
		\begin{prop}\label{prop:isogeny.reduction}
		Let $\varphi : E_1\to E_2$ be an isogeny between two non-isotrivial elliptic curves over $\bar{K}$,
		and write $q := \min\{\degins\varphi, \degins\widehat{\varphi}\}$.
		Then the isogeny $\varphi$ factors through the multiplication-by-$q$ map $[q]$.
		\end{prop}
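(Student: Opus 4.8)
The plan is to peel a copy of the multiplication map $[q]$ off the canonical decomposition of $\varphi$ supplied by Proposition~\ref{prop:decomposition.isogeny}. Write $p^e=\degins\varphi$ and $p^f=\degins\widehat{\varphi}$, so that $q=p^m$ with $m:=\min\{e,f\}$; we may assume $m\geq 1$, the statement being trivial when $q=1$. By Proposition~\ref{prop:decomposition.isogeny} we have $\varphi=\Vers{p^f}\circ\psi\circ\Frob{p^e}$ for some biseparable isogeny $\psi:E_1^{(p^e)}\to E_2^{(p^f)}$. Since $m\leq e$, the $p^e$-power Frobenius $\Frob{p^e}:E_1\to E_1^{(p^e)}$ factors as $\Frob{p^{e-m}}\circ F_1$ through the $p^m$-power Frobenius $F_1:E_1\to E_1^{(p^m)}$; dually, since $m\leq f$, the isogeny $\Vers{p^f}:E_2^{(p^f)}\to E_2$ factors as $V_2\circ\Vers{p^{f-m}}$ through $V_2:=\Vers{p^m}:E_2^{(p^m)}\to E_2$ (dualise the analogous factorisation of $\Frob{p^f}:E_2\to E_2^{(p^f)}$). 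Setting $\chi:=\Vers{p^{f-m}}\circ\psi\circ\Frob{p^{e-m}}:E_1^{(p^m)}\to E_2^{(p^m)}$, this rewrites our isogeny as
\[ \varphi = V_2\circ\chi\circ F_1. \]
On the other hand, since $E_1$ is non-isotrivial (hence ordinary), we have $[q]=[p^m]=V_1\circ F_1$ on $E_1$, where $V_1:=\Vers{p^m}:E_1^{(p^m)}\to E_1$ is \emph{separable} and $F_1$ is the very Frobenius isogeny appearing above.

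The key step is then to show that $V_2\circ\chi:E_1^{(p^m)}\to E_2$ factors through the separable isogeny $V_1$. Granting this, write $V_2\circ\chi=\lambda\circ V_1$ for an isogeny $\lambda:E_1\to E_2$; then
\[ \varphi = V_2\circ\chi\circ F_1 = \lambda\circ V_1\circ F_1 = \lambda\circ[q], \]
which is exactly the assertion. To obtain the factorisation of $V_2\circ\chi$ through $V_1$ we appeal to Proposition~\ref{prop:decomp.isog.factor}, so it is enough to check the kernel inclusion $\ker V_1\subseteq\ker(V_2\circ\chi)$. Since $V_1$ is separable, $\ker V_1$ is reduced and this inclusion may be verified on $\bar{K}$-points. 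From $F_1\circ V_1=[p^m]$ on $E_1^{(p^m)}$ one gets $(\ker V_1)(\bar{K})\subseteq E_1^{(p^m)}[p^m](\bar{K})$; and if $P$ lies in this $p^m$-torsion subgroup then $\chi(P)$ lies in $E_2^{(p^m)}[p^m](\bar{K})$ ($\chi$ being a group homomorphism), whence $V_2(\chi(P))=0$ because the $p^m$-power Frobenius $E_2\to E_2^{(p^m)}$ kills it ($\Frob{p^m}(V_2(\chi(P)))=[p^m]\chi(P)=0$) and is injective on $\bar{K}$-points, being purely inseparable since $E_2$ is non-isotrivial. Thus $(\ker V_1)(\bar{K})\subseteq E_1^{(p^m)}[p^m](\bar{K})\subseteq(\ker(V_2\circ\chi))(\bar{K})$, as needed.

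I expect the only real difficulty to be bookkeeping: one has to keep careful track of the Frobenius twists $E_i^{(p^\bullet)}$ and of the maps between them, and in particular to keep the two Verschiebung isogenies denoted $\Vers{p^m}$ distinct, namely $V_1:E_1^{(p^m)}\to E_1$ (coming from the decomposition of $[q]$) and $V_2:E_2^{(p^m)}\to E_2$ (coming from the decomposition of $\varphi$), together with the two $p^m$-power Frobenius maps out of $E_1$ and out of $E_2$; writing sources and targets everywhere, as above, is essential. A secondary point deserving a word of care is that $\ker(V_2\circ\chi)$ need not be reduced (when $e>m$, the isogeny $\chi$ contains a Frobenius factor), but this is harmless: the inclusion required by Proposition~\ref{prop:decomp.isog.factor} only has to hold against the \emph{reduced} scheme $\ker V_1$, so it suffices to compare $\bar{K}$-points.
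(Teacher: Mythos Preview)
Your argument is correct and takes a genuinely different route from the paper's. The paper first reduces (by duality) to $\degins\varphi\leq\degins\hat\varphi$, and then further to the symmetric case $e=f$ by stripping an extra Verschiebung off the end. In that case it introduces an auxiliary curve $E_1':=E_1/H$, where $H$ is the $p$-primary part of $(\ker\varphi)(\bar K)$, re-applies Proposition~\ref{prop:decomposition.isogeny} separately to the quotient map $\pi:E_1\to E_1'$ and to the cofactor $\lambda:E_1'\to E_2$, and discovers an isomorphism $\iota:E_1\simeq (E_1')^{(p^e)}$ which, after some diagram chasing, makes the $[q]$-factor visible. Your approach is more economical: you avoid both the reduction to $e=f$ and the auxiliary quotient curve by peeling off exactly $p^m$ worth of Frobenius from the front and $p^m$ worth of Verschiebung from the back of the decomposition, and then obtaining the remaining factorisation $V_2\circ\chi=\lambda\circ V_1$ with a single kernel inclusion (using that $V_1$ is separable, that homomorphisms preserve torsion, and that Frobenius is injective on $\bar K$-points). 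The paper's detour through $E_1'$ has the small expository payoff of making the structure of the minimal isogeny a bit more tangible via the isomorphism~$\iota$, but your argument is shorter and works uniformly in $e,f$ without case distinctions. Your care about $\ker(V_2\circ\chi)$ possibly being non-reduced is well placed and correctly resolved: Proposition~\ref{prop:decomp.isog.factor} (i.e.\ \cite[III, Cor.~4.11]{AEC}) only requires the inclusion on $\bar K$-points when the first isogeny is separable.
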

		\begin{proof} 
		Up to replacing $\varphi$ by its dual and exchanging the roles of $E_1, E_2$, we may assume that $q = \degins\varphi$. 
		It suffices to treat this case because $\varphi$ factors through $[q]$ if and only if $\hat{\varphi}$ does. 
		We begin by decomposing~$\varphi$ as in Proposition \ref{prop:decomposition.isogeny} above: 
		there is a biseparable isogeny $\psi:E_1^{(p^e)}\to E_2^{(p^f)}$ such that 
		\[\varphi = \Vers{p^f} \circ \psi \circ \Frob{p^e},  \]
		where $\degins\varphi=p^e$ and $\degins\hat{\varphi}=p^f$, and where
		the degree $m$ of $\psi$ is coprime to $p$.
		By assumption we have $q=p^e \leq p^f$. 		
		Note that $\deg\varphi = p^emp^f = q^2 p^{d} m$ where $d:=f-e\geq 0$, and that $\degsep\varphi = m p^f$.

		We note that it is enough to treat the case where $e=f$.  
		Consider indeed the isogeny $\varphi': E_1\to E_2^{(p^d)}$ defined by $\Vers{p^e}\circ\psi\circ\Frob{p^e}$: 
		if $\varphi'$ factors through $[q]$ then so does $\varphi$, because $\varphi = \Vers{p^d}\circ \varphi'$.
		We furthermore note that the result is immediate if $f=0$, we thus assume that $f\geq 1$.
		This reduces the problem to the case where $e=f\geq 1$ (\ie{}  $d=0$ and $\varphi=\varphi'$), 
		which is now assumed for the rest of the proof.
		\\

		The set $G:=(\ker\varphi)(\bar{K})$ of $\bar{K}$-rational points in the kernel of $\varphi$ 
		is a finite Abelian group of order $\degsep\varphi = mp^f$. 
		(Note that, as a group scheme, the kernel $\ker\varphi$ is not reduced.) 
		Since $p^f$ divides~$|G|$, 
		$G$~contains a unique subgroup $H$ of order $p^f$ 
		(which consists of  its elements of order a power of $p$, since $m$ is coprime to $p$).
		By Proposition \ref{prop:isog.quotient}, there exist a unique elliptic curve $E'_1$ over $\bar{K}$ 
		and a separable isogeny $\pi : E_1\to E'_1$ with kernel $H$.
		The isogeny $\pi$ has degree $\deg\pi = \degsep\pi=p^f$.

		By construction, $H=(\ker\pi)(\bar{K})$ is a subgroup of $G$ and, $\pi$ being separable, 
		we deduce from Proposition~\ref{prop:decomp.isog.factor} that $\varphi:E_1\to E_2$ factors through $\pi$: 
		there exists an isogeny $\lambda: E'_1\to E_2$ such that $\varphi = \lambda\circ\pi$.
		It follows from the multiplicativity of degrees that $\degsep\lambda =m$ and $\degins\lambda =p^e$.
		We use Proposition \ref{prop:decomposition.isogeny} to decompose $\pi$.
		Since $\pi$ is separable of degree a power of $p$, we obtain that the diagram 
		\begin{center}\begin{tikzcd}
		E_1    
		\ar[r, " \pi "] 
		\ar[d,  "\iota"'] & E'_1 \\
		(E'_1)^{(p^f)}
		\ar[ur , "\Vers{p^f}"']
		\end{tikzcd}\end{center}
		is commutative, where $\iota$ is a biseparable isogeny. 
		Comparing degrees, we observe that $\iota$ has degree $1$ and 
		must therefore be an isomorphism $E_1\simeq (E'_1)^{(p^f)}$.
		 
		Let us apply Proposition \ref{prop:decomposition.isogeny} once more, this time to factor $\lambda$.
		Since the separability degree of $\lambda$ is coprime to $p$, 
		we deduce that $\hat{\lambda}$ is separable.
		Therefore, there is a biseparable isogeny $\gamma : (E'_1)^{(p^e)}\to E_2$ such that $\lambda = \gamma\circ\Frob{p^e}$. 
		Here is a diagram summarising the various isogenies considered here: %in this proof:
		\begin{center}\begin{tikzcd}[row sep=large, column sep=large, transform shape, nodes={scale=1.0}]
		&  E_1 \arrow[r, "\varphi"]
		\arrow[d, "\pi"] 
		\arrow[ld, "\iota", "\text{\rotatebox{30}{$\simeq$}}"'] 
		& E_2 & \\
		(E'_1)^{(p^f)} 
		\arrow[r, "\Vers{p^f}"'] 
		\arrow[rr, bend right=40, "{[p^e]}"] 
		& E'_1 
		\arrow[r, "\Frob{p^e}"'] 
		\arrow[ru, "\lambda"] 
		& (E'_1)^{(p^e)} 
		\arrow[u, "\gamma"]& 
		\arrow[l, "\text{\rotatebox{0}{$\simeq$}}"' , "\iota"] 
		\arrow[lu, "\phi"', dashrightarrow] E_1. % \\
		\end{tikzcd}\end{center}
		\noindent 
		In this diagram, the triangles are commutative (recall that we assume $e=f$).		

		We now set $\phi := \gamma\circ\iota$. 
		The composition $\phi : E_1\to E_2$ is an isogeny between $E_1$ and $E_2$, 
		and we have $\varphi = \phi \circ \hat{\iota}\circ \Frob{p^e}\circ \Vers{p^e}\circ\iota$. 
		But we know that $\Frob{p^e}\circ\Vers{p^e}=[p^e]=[q]$, and we have proved that $\hat{\iota}\circ\iota =\mathrm{id}$. 
		This shows that $\varphi = \phi\circ[q] = [q] \circ \phi$. 
		Hence, $\varphi$ factors through $[q]$ in the case at hand, and thus, in general. 
		\end{proof}

\begin{rema}\label{rema:isog.min.sep}
Let  $E_1, E_2$ be two non-isotrivial isogenous elliptic curves. 
As was mentioned in \S\ref{ss:isogenies.prelim},  the $\Z$-module $\Hom(E_1, E_2)$ 
is free of rank $1$: we may thus fix an isogeny $\varphi_0:E_1\to E_2$ which generates $\Hom(E_1, E_2)$. 
This generator $\varphi_0$ has minimal degree among all non-zero elements in $\Hom(E_1, E_2)$.
By the above proposition and the minimality of $\deg\varphi_0$, 
one has $\min\{\degins\varphi_0, \degins\widehat{\varphi_0}\} = 1$. 

This means that at least one of $\varphi_0$ or $\widehat{\varphi_0}$ is separable.
\end{rema}

%%%%%%%%%%%%%%%%%%%%%%%%%%%%%%%%%%%%%%%%%%%%%%%%%%%%%%%%%%%%%%%%%%%%%%%%%%%%%%%%%%%%%%%%%%%%%%%%%%%%%%%%%%%%%%%%%%%%%%%%%%%%%%%%%%%%%%%%%%%%%%%%%%%%%%%%
%%%%%%%%%%%%%%%%%%%%%%%%%%%%%%%%%%%%%%%%%%%%%%%%%%%%%%%%%%%%%%%%%%%%%%%%%%%%%%%%%%%%%%%%%%%%%%%%%%%%%%%%%%%%%%%%%%%%%%%%%%%%%%%%%%%%%%%%%%%%%%%%%%%%%%%%
\section{Isogenies and heights}\label{sec:isog.heights} %%

Let $k$ be a perfect field of characteristic $p\geq 0$.
Let $C$ be a smooth projective geometrically irreducible curve over $k$, and $K=k(C)$ denote its function field. 
We fix an algebraic closure $\bar{K}$ of $K$.

The goal of this section is to describe the effect of an isogeny between elliptic curves over $\bar{K}$ on the height of their $j$-invariants:
we prove Theorem~\ref{itheo:comp.hmod} (see \S\ref{ss:isog.hmod}), as well as state a few consequences thereof.
The general idea is that ``biseparable isogenies preserve the modular height''. 
If $K$ has characteristic $0$, this will directly lead to the desired result.
In positive characteristic $p$, more work is required.

\subsection{Biseparable isogenies preserve the differential height}\label{ss:ds.isog.hdiff} %%

We begin by recalling the following result due to Parshin (see \cite{Parshin70, Parshin71} for instance) 
as well as giving a proof thereof, for the convenience of the reader. 
The reader is referred to \cite{BLR} (especially \S7.3 there) and \cite{Raynaud} for more details about N\'eron models and isogenies. 

		\begin{theo}\label{theo:isog.preserve.hdiff}
		Let $E_1, E_2$ be non-isotrivial elliptic curves over a finite extension $L$ of $K$.
		Assume that there exists a biseparable isogeny $\varphi: E_1\to E_2$. 
		Then, we have 
		\[\hdiff(E_1/L)=\hdiff(E_2/L).\]
		\end{theo}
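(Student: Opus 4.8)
The strategy is to compare the Néron models of $E_1$ and $E_2$ over the curve $C'$ (a model of $L$) and exploit the fact that a biseparable isogeny $\varphi$ extends to a morphism of Néron models which is, at the level of the tangent spaces along the zero sections, an isomorphism after inverting the characteristic — but in fact, since $\deg\varphi$ is coprime to $p$ by Lemma~\ref{lemm:isog.ds.degree}, the induced map on the invariant differentials is already an isomorphism locally at every place. Concretely, let $\pi_i:\Ecal_i\to C'$ be the minimal regular models of $E_i$, with zero sections $s_i$, and let $\omega_i:=s_i^\ast\Omega^1_{\Ecal_i/C'}$ be the associated line bundles on $C'$, so that $\hdiff(E_i/L)=\deg(\omega_i)/[L:K]$. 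It suffices to prove $\deg\omega_1=\deg\omega_2$, and for this it is enough to produce an isomorphism of line bundles $\varphi^\ast\omega_2\xrightarrow{\sim}\omega_1$ on $C'$.

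First I would reduce to a purely local statement: for each closed point $v$ of $C'$, one must check that the pullback $\varphi^\ast:\omega_{2,v}\to\omega_{1,v}$ of invariant differentials is an isomorphism of $\O_{C',v}$-modules. By the Néron mapping property, $\varphi$ extends (uniquely) to a homomorphism $\Phi:\Ecal_1^\circ\to\Ecal_2^\circ$ of the connected Néron models, and the dual isogeny $\hat\varphi$ likewise extends to $\hat\Phi:\Ecal_2^\circ\to\Ecal_1^\circ$, with $\hat\Phi\circ\Phi=[\deg\varphi]$ on $\Ecal_1^\circ$ and $\Phi\circ\hat\Phi=[\deg\varphi]$ on $\Ecal_2^\circ$. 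Passing to the cotangent spaces along the zero sections yields $\O_{C',v}$-linear maps $\Phi^\ast$ and $\hat\Phi^\ast$ whose composite in either order is multiplication by $\deg\varphi$; since $\deg\varphi$ is a unit in $\O_{C',v}$ (it is coprime to $p=\mathrm{char}\,k_v$), both $\Phi^\ast$ and $\hat\Phi^\ast$ are isomorphisms. Because $\varphi$ and $\hat\varphi$ are separable, their kernels are étale, so this argument applies at \emph{every} $v$, and gluing the local isomorphisms gives $\varphi^\ast\omega_2\cong\omega_1$ on all of $C'$. In characteristic $0$ the coprimality is automatic and the statement is the classical one.

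The step I expect to be the main obstacle is the careful bookkeeping between the \emph{minimal regular model} $\Ecal_i$ (which is what enters the definition of $\hdiff$) and the \emph{Néron model}: one has to know that $\omega_{E_i/L}$ computed from the minimal regular model agrees with $s_i^\ast\Omega^1$ computed from the Néron model, i.e. that the relevant sheaf is insensitive to the birational modifications relating the two (this is standard — the zero section lands in the smooth locus, which is the Néron model — but it deserves a sentence), and that $\varphi$ really does extend to the smooth loci. A clean alternative, which I would mention, is to bypass Néron models entirely via Lemma~\ref{lemm:silverman.hdiff}: since $12\,\hdiff(E_i/L)=\deg\Deltamin(E_i/L)/[L:K]$, it suffices to show that biseparable isogenous curves have minimal discriminant divisors of equal degree. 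For that one works place-by-place, using that coprime-to-$p$ isogenies do not change the reduction type (good stays good, multiplicative stays multiplicative with the same $v(j)$, additive stays additive) — this can be extracted from Tate's algorithm together with the behaviour under unramified base change — so $\delta_{w}(E_1)=\delta_w(E_2)$ for all $w$, whence the claim. Either route gives the theorem; I would present the Néron-model argument as the conceptual one and flag the discriminant computation as the elementary backup.
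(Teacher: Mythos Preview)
Your main argument is correct and follows essentially the same route as the paper: extend $\varphi$ to a morphism of N\'eron models over $C'$, and verify locally that the induced map $\omega_2\to\omega_1$ is an isomorphism. The paper justifies the local isomorphism by observing that $\varphi_w$ is \'etale (since $[d]$ is \'etale for $d$ coprime to $p$) and invoking the standard fact that \'etale maps induce isomorphisms on relative differentials; you instead use that $\hat\Phi^\ast\circ\Phi^\ast=[d]^\ast=d\cdot\mathrm{id}$ on the cotangent line, with $d$ a unit in $\O_{C',v}$. These are two phrasings of the same mechanism, and your remark about the minimal regular model versus the N\'eron model (the zero section landing in the smooth locus) is exactly the bookkeeping the paper leaves implicit.

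Your proposed ``elementary backup'' via minimal discriminants, however, does not work as stated. The claim that a prime-to-$p$ isogeny preserves $w(j)$ at a multiplicative place is false: for Tate curves, a cyclic degree-$\ell$ isogeny carries $\E_t$ to $\E_{t^\ell}$ (quotient by $\mu_\ell$) or to $\E_{t'}$ with $(t')^\ell=t$ (quotient by a subgroup containing $t^{1/\ell}$), so $w(j(E_2))$ equals $\ell\cdot w(j(E_1))$ or $w(j(E_1))/\ell$, not $w(j(E_1))$. Consequently $\delta_w(E_1)\neq\delta_w(E_2)$ in general, and the place-by-place equality $\delta_w(E_1)=\delta_w(E_2)$ you assert simply fails (the $X_0(11)$/$X_1(11)$ pair over $\Q$, with discriminants $-11^5$ and $-11$, already illustrates this over number fields). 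The equality of the \emph{global} degrees $\deg\Deltamin(E_1/L)=\deg\Deltamin(E_2/L)$ is precisely the content of the theorem and cannot be read off locally; so drop this alternative, or at least do not present it as a proof.
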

		\begin{proof} 
		We fix a model $C'/k'$ of $L$.
		For $i=1,2$, we denote the N\'eron model of $E_i/L$ by $\pi_i : \Ecal_i\to C'$, 
		and we write $s_i:C'\to\Ecal_i$ for its zero-section.
		The surface $\Ecal_i$ is smooth  over $C'$: 
		let $\Omega^1_{\Ecal_i/C'}$ denote the sheaf of relative differential $1$-forms on $\Ecal_i/C'$, and
		consider the line bundle $\omega_i := s_i^\ast\Omega^1_{\Ecal_i/C'}$ on~$C'$. 
		Recall from~\S\ref{ssec:hdiff} that the differential height of $E_i/L$ essentially  equals $\deg\omega_i$.
		To prove the theorem, it is (more than) sufficient to show that $\omega_1\simeq \omega_2$ as line bundles on~$C'$.
		The given isogeny $\varphi:E_1\to E_2$ extends into a group morphism $\Phi:\Ecal_{1} \to \Ecal_{2}$ which is still an isogeny, 
		which means that $\Phi$ is, fiber by fiber, finite and surjective on the identity components. 
		The morphism~$\Phi$ induces a map $\Phi^\ast\Omega^1_{\Ecal_2/C'} \to \Omega^1_{\Ecal_1/C'}$, 
		which we may restrict to the zero-section. 
		Using that $\Phi$ satisfies $\Phi\circ s_1 = s_2$, we obtain a map of $\O_{C'}$-modules 
		\[F :  \omega_2 = s_2^\ast\Omega^1_{\Ecal_2/C'} \simeq s_1^\ast\Phi^\ast\Omega^1_{\Ecal_2/C'} 
		\longrightarrow s_1^\ast\Omega^1_{\Ecal_1/C'} = \omega_1.\]
		Is suffices to show that $F$ is an isomorphism: to do so, we may argue locally on $C'$.
		We thus set out to show that, for any closed point $w$ of $C'$, 
		the restriction of $F$ to the fibers $\omega_{1,w}$ and $\omega_{2,w}$ of $\omega_1$ and $\omega_2$ above~$w$ is an isomorphism.

		Let $w$ be a closed point of $C'$. Write $\O_{w}$ for the local ring of $C'$ at $w$ and $S:=\Spec \O_w$. 
		We also let $k_w$ denote the residue field at $w$ (note that $k_w$ is a finite extension of $k'$, 
		and thus has the same characteristic as $k'$).
		Denoting the fiber of $\Ecal_i$ at $w$ by $\Ecal_{i,w} := \Ecal_i \times_{C'} \Spec\O_w$, 
		the restriction $\varphi_w : \Ecal_{1,w}\to \Ecal_{2,w}$ of $\Phi$ is an isogeny. 
		It is then known (see \cite[\S7.3, Lemma 5]{BLR}) that there exists a dual isogeny $\hat{\varphi}_w:\Ecal_{2,w}\to \Ecal_{1,w}$, 
		such that $\hat{\varphi}_w\circ\varphi_w=[d]$ with $d=\deg\varphi$. 
		Since $d$ is coprime to the characteristic of $k_w$, all three of $\varphi_w$, $\hat{\varphi}_w$ and $[d]$ are \'etale 
		by \cite[\S1.1, Exemples 1.1.2]{Raynaud}, or  \cite[\S7.3, Lemma 2]{BLR}.

		The isogeny $\varphi_w$ induces a canonical map 
		$\varphi_w^\ast\Omega^1_{\Ecal_{2,w}/S}\to \Omega^1_{\Ecal_{1,w}/S}$ which,
		since $\varphi_w$ is \'etale on $\Ecal_{1,w/S}$,   is actually an isomorphism (see \cite[\S2.2, Corollary 10]{BLR}). 
		Moreover, $\varphi_w$ being a group morphism, we have $\varphi_w\circ s_1|_{\Ecal_{1,w}} = s_2|_{\Ecal_{2,w}}$.
		Pulling back  the above isomorphism along (the restriction of) $s_1$, we obtain an isomorphism of $\O_{w}$-modules
		\[ \omega_{2,w} 
		= s_2^\ast\Omega^1_{\Ecal_{2, w}/S} \simeq s_1^\ast\varphi_w^\ast\Omega^1_{\Ecal_{2,w}/S}  
		\xrightarrow[]{\phantom{..}\simeq\phantom{..}}  s_1^\ast\Omega^1_{\Ecal_{1,w}/S} = \omega_{1,w},\]
		which is the restriction of $F$ to the fibers above $w$. This concludes the proof.
		\end{proof}

\subsection{Biseparable isogenies preserve the inseparability degree of the $j$-invariants}\label{ss:ds.isog.deginsj} %%

In this subsection, we assume that the function field $K$ has positive characteristic $p$. 
We describe the effect of a biseparable isogeny on the inseparability degree of the $j$-invariants.

		\begin{prop}\label{prop:ds.iso.insepdeg.j} 
		Let $E_1, E_2$ be two \emph{non-isotrivial} elliptic curves over a finite extension $L$ of $K$. 
		If there exists a biseparable isogeny $E_1 \to E_2$, 
		then the inseparability degrees of $j(E_1)$ and $j(E_2)$ are equal.
		\end{prop}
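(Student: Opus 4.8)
The plan is to realise the pair $\big(E_1,\ker\varphi\big)$ as a point on a modular curve $X_0(\ell)$, and to exploit the fact that the two degeneracy maps $X_0(\ell)\to X(1)$ differ only by the Atkin--Lehner involution.

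First I would reduce to a simple situation. By Lemma~\ref{lemm:isog.ds.degree} the degree $d:=\deg\varphi$ is prime to $p$, so $\ker\varphi$ is reduced; moreover, since $d$ is prime to $p$, the finite group $H:=(\ker\varphi)(\bar K)\subseteq E_1[d]$, together with all of its subgroups, is rational over some finite \emph{separable} extension $L'/L$ (see the proof of Lemma~\ref{lemm:kernel.sep}). As $[L':L]_i=1$, the formula $\degins(f\in L')=\degins(f\in L)\cdot[L':L]_i$ from \S\ref{ssec:Insepdeg} shows it suffices to prove the statement after replacing $L$ by $L'$; thus we may assume $H$ and all its subgroups are $L$-rational. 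Fixing a composition series $0=H_0\subset H_1\subset\cdots\subset H_r=H$ with each quotient $H_i/H_{i-1}$ cyclic of prime order $\ell_i\neq p$, Propositions~\ref{prop:decomp.isog.factor} and~\ref{prop:isog.quotient} let us factor $\varphi$ as a chain
\[E_1\longrightarrow E_1/H_1\longrightarrow\cdots\longrightarrow E_1/H_r\cong E_2\]
of separable prime-degree isogenies between non-isotrivial elliptic curves over $L$ (isotriviality being an isogeny invariant), each step having an $L$-rational cyclic kernel. Since the inseparability degree is multiplicative along towers of function fields, it is enough to treat one step: I must show that if $\psi\colon A\to B$ is a separable isogeny of prime degree $\ell\neq p$ between non-isotrivial elliptic curves over $L$ with $L$-rational kernel $G:=\ker\psi$, then $\degins j(A)=\degins j(B)$.

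For this I would bring in the modular curve $X_0(\ell)$, its two degeneracy maps $\pi_1,\pi_2\colon X_0(\ell)\to X(1)=\P^1_j$, and its Atkin--Lehner involution $w_\ell\in\mathrm{Aut}(X_0(\ell))$. Since $\ell\neq p$, all of these have good reduction and are defined over the constant field of $L$ (which is perfect), with the usual moduli interpretation: $\pi_1$ and $\pi_2$ send the point of $(E,\Gamma)$ to the points of $E$ and of $E/\Gamma$ respectively, while $w_\ell$ sends it to that of $(E/\Gamma,E[\ell]/\Gamma)$; in particular $\pi_2=\pi_1\circ w_\ell$. Fixing a model $C'$ of $L$, the pair $(A,G)$ defines an $L$-point of the coarse moduli space $Y_0(\ell)$, which extends (as $C'$ is smooth and projective) to a morphism $P\colon C'\to X_0(\ell)$; by construction $\pi_1\circ P$ is $j(A)$ and $\pi_2\circ P$ is $j(A/G)=j(B)$, each viewed as a morphism $C'\to\P^1$ (recall $A/G\cong B$ over $\bar K$, so their $j$-invariants agree in $L$).

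It then remains to invoke multiplicativity of the inseparability degree for non-constant morphisms of smooth projective curves. Since $w_\ell$ is an automorphism, $\degins(w_\ell)=1$, and hence
\begin{align*}
\degins j(A)&=\degins(\pi_1\circ P)=\degins(\pi_1)\cdot\degins(P)\\
&=\degins(\pi_1)\cdot\degins(w_\ell)\cdot\degins(P)=\degins(\pi_1\circ w_\ell\circ P)=\degins j(B),
\end{align*}
which is the desired equality; note that this never uses whether $\pi_1$ itself is separable. I expect the only delicate point to be the construction of the modular point $P\colon C'\to X_0(\ell)$ and the check that $\pi_1,\pi_2$ pull back to exactly $j(A),j(B)$ -- in particular controlling the behaviour of $P$ at the cusps, \ie{} at the places where $A$ has potentially multiplicative reduction; the needed facts about the good reduction of $X_0(\ell)$ and of $w_\ell$ in characteristic $p\neq\ell$ are standard. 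Granting these, the remainder of the argument is purely formal.
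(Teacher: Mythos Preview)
Your argument is correct and takes a genuinely different route from the paper's proof.

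The paper argues purely field-theoretically: it observes that since $H=(\ker\varphi)(\bar K)$ is defined over a separable extension of $L_1:=k'(j_1)$, and since $E_2\simeq E_1/H$ forces $j_2\in L_1(H)$, the extension $k'(j_1,j_2)/k'(j_1)$ is separable. A tower argument then gives $\degins(j_1)\mid\degins(j_2)$, and the reverse divisibility follows by applying the same reasoning to the dual isogeny. No modular curves appear, and no reduction to prime degree is needed.

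Your approach is more geometric: after reducing to a prime-degree cyclic isogeny with $L$-rational kernel, you realise the pair $(A,G)$ as a non-constant morphism $P\colon C'\to X_0(\ell)$ and exploit the identity $\pi_2=\pi_1\circ w_\ell$ together with multiplicativity of inseparability degree in compositions of curve morphisms. This yields equality in one stroke, without the divisibility-plus-duality manoeuvre. The cost is the extra reduction step (passing to a separable extension where all subgroups of $H$ are rational, then decomposing into prime-degree pieces) and reliance on the good reduction of $X_0(\ell)$ and of $w_\ell$ in characteristic $p\neq\ell$. These facts are standard, and indeed the paper itself invokes exactly this modular-curve machinery later in \S\ref{ss:mod.curves}--\S6.2, so your use of it is entirely in keeping with the ambient toolkit. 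One minor remark: your appeal to ``multiplicativity along towers'' when reducing to prime degree is really just transitivity of equality; the genuine multiplicativity is used only in the final display.
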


The proof below is adapted from \cite[2.2]{BanLonVig} where it is proven that, 
if $E_1\to E_2$ is a biseparable isogeny and if $j(E_1)\in K$ is separable, then $j(E_2)$ is separable too.

		\begin{proof} 
		Fix a model $C'/k'$ for $L$.
		We write $j_1$, $j_2$ to denote the $j$-invariants of $E_1$ and $E_2$ respectively, and
		let $\varphi:E_1\to E_2$ be a biseparable isogeny, whose degree is denoted by $d$ 
		(by Lemma \ref{lemm:isog.ds.degree}, $d$ is thus coprime to $p$).
		Choose an elliptic curve $E'_2$ defined over $k'(j_2)$ whose $j$-invariant is $j_2$:
		by construction, $E_2$ and $E'_2$ are isomorphic over $\bar{K}$. 
		Actually, $E_2$ and $E'_2$ become isomorphic over a finite separable extension $L'/L$ of degree $\leq 24$, 
		see \cite[Chapter III, Proposition 1.4]{AEC}.
		Moreover one can assume that the field of constants of $L'$ is $k'$.
		We may and do base change the situation to $L'$, so that $E_2\simeq E'_2$ over $L'=L$.

		For the convenience of the reader, here is a diagram of the field extensions we are going to consider: 
		all these extensions are finite, we have indicated some of the inseparability degrees, 
		and the dashed lines denote extensions which are  separable (see below). 
		\begin{center}\begin{tikzcd}[row sep = small, column sep=large, transform shape, nodes={scale=1.0}]
		&& L && \\ \\
		& L_1(H) 
		\ar[uur, hook, no head]
		&& \phantom{L_2(H)}& \\
		&& k'(j_1, j_2) 
		\ar[uuu, hook', no head]
		\ar[ul, hook', blue,  dashed, no head]
		 && \\
		L_1=k'(j_1)
		\ar[uuuurr, hook, no head, bend left= 25, "d_i= \degins(j_1)" near start]
		\ar[uur, hook, blue, dashed, no head]
		\ar[urr, hook, blue,  dashed, no head]
		&&&& k'(j_2) 
		\ar[uuuull, hook', no head, bend right= 25, "d_i= \degins(j_2)"' near start]
		\ar[ull, hook', no head]
		\end{tikzcd}\end{center}

		Let $L_1=k'(j_1)\subset L$.
		We view $\varphi$ as an isogeny $\varphi:E_1\to E'_2$. 
		By Lemma \ref{lemm:kernel.sep}, the group $H:=(\ker{\varphi})(\bar{K})$ of $\bar{K}$-rational points in the kernel of ${\varphi}$ 
		is then defined over a separable extension of $L$: the (finite) extension $L_1(H)/L_1$ obtained 
		by adjoining to $L_1$ the coordinates of points of $H$ is therefore separable.

		Because the curves $E_1$ and $E'_2$ are linked by the isogeny ${\varphi}$, we have $E'_2\simeq E_1/H$. 
		In particular, $E_2$ is isomorphic to an elliptic curve defined over $L_1(H)$, so that $j_2\in L_1(H)$.
		This implies that ${L_1(j_2) = k'(j_1, j_2)}$, being a field sub-extension of $L_1(H)/L_1$, is a finite separable extension of $L_1$. 
		Hence the extension $L/L_1$ has the same inseparability degree as the  extension $L/k'(j_1,j_2)$: 
		\[ \degins(j_1) = [L:L_1]_i = [L: k'(j_1, j_2)]_i.\] 
		On the other hand, we have a chain of finite extensions $k'(j_2) \subset k'(j_1, j_2) \subset L$. 
		Therefore, $[L:k'(j_1,j_2)]_i$ divides $[L:k'(j_2)]_i = \degins(j_2)$.
		We thus conclude that $[L:k'(j_1)]_i = \degins(j_1)$ divides $\degins(j_2)$. 

		Applying the same argument to the dual isogeny $\hat{\varphi}:E'_2\to E_1$ (which is also biseparable), 
		we conclude that $\degins(j_2)$ divides $\degins(j_1)$. 
		These two quantities are therefore equal.
		\end{proof}

Making use of Proposition \ref{prop:decomposition.isogeny}, 
we now describe the effect of an arbitrary isogeny on the inseparability degrees of the $j$-invariants.
		\begin{coro}\label{coro:dualdeg}
		Let $E_1, E_2$ be two non-isotrivial elliptic curves defined over a finite extension $L$ of $K$. 
		Assume that there exists an isogeny $\varphi : E_1\to E_2$.   
		Then we have 
		\[{\degins\hat{\varphi}}\ \, \degins j(E_2)
		=  {\degins\varphi}\ \, \degins j(E_1).\]
		\end{coro}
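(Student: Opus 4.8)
The plan is to reduce the general case to the biseparable case treated in Proposition \ref{prop:ds.iso.insepdeg.j} by using the canonical decomposition from Proposition \ref{prop:decomposition.isogeny}. If $K$ has characteristic $0$ there is nothing to prove, since all inseparability degrees equal $1$, so assume $\mathrm{char}\,K = p > 0$. First I would apply Proposition \ref{prop:decomposition.isogeny} to write
\[ \varphi = \Vers{p^f}\circ\psi\circ\Frob{p^e}, \qquad E_1 \xrightarrow{\Frob{p^e}} E_1^{(p^e)} \xrightarrow{\psi} E_2^{(p^f)} \xrightarrow{\Vers{p^f}} E_2, \]
where $\psi$ is biseparable, $p^e = \degins\varphi$ and $p^f = \degins\hat\varphi$. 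The idea is to track the inseparability degree of the $j$-invariant along each of the three arrows.

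The key steps are as follows. For the middle arrow, since $E_1^{(p^e)}$ and $E_2^{(p^f)}$ are non-isotrivial (being isogenous to the non-isotrivial curves $E_1, E_2$) and $\psi$ is biseparable, Proposition \ref{prop:ds.iso.insepdeg.j} gives $\degins j(E_1^{(p^e)}) = \degins j(E_2^{(p^f)})$. For the outer arrows I need the behaviour of $\degins$ under Frobenius twists: because $j(E^{(q)}) = j(E)^q$ and, by \S\ref{ssec:Insepdeg}, $\degins(f)$ is the largest power of $p$ with $f \in K^{p^\bullet}$ (equivalently, the $q$ in the factorisation of $f\colon C\to\P^1$ through a Frobenius), raising to the $q$-th power multiplies the inseparability degree by $q$. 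Hence
\[ \degins j(E_1^{(p^e)}) = p^e\,\degins j(E_1), \qquad \degins j(E_2^{(p^f)}) = p^f\,\degins j(E_2). \]
Combining the three relations yields $p^e\,\degins j(E_1) = p^f\,\degins j(E_2)$, i.e. $\degins\varphi \cdot \degins j(E_1) = \degins\hat\varphi \cdot \degins j(E_2)$, which is exactly the claim. One should be slightly careful that the identity $j(E^{(q)}) = j(E)^q$ — already noted in \S\ref{ss:isogenies.prelim} — together with the fact that $\degins$ is computed over a fixed base field, really does give the multiplicativity of $\degins$ by $q$; this is where the description $\degins(f) = p^e$ with $e$ maximal such that $f \in K^{p^e}$ is used.

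The main obstacle, such as it is, is the Frobenius-twist computation: one must make sure the relevant $j$-invariants all live in a common finite extension $L$ (enlarging $L$ as needed so that it contains $j(E_1)$, $j(E_2)$, and the $j$-invariants of the intermediate curves) and that the formula $\degins(f^q \in L) = q\,\degins(f \in L)$ is applied over that fixed $L$; the final remark in \S\ref{ssec:Insepdeg} on the behaviour of $\degins$ under field extension is what guarantees the answer is independent of this choice. Everything else is bookkeeping with the decomposition already established.
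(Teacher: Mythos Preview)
Your proof is correct and follows essentially the same approach as the paper: decompose $\varphi$ via Proposition~\ref{prop:decomposition.isogeny}, apply Proposition~\ref{prop:ds.iso.insepdeg.j} to the biseparable middle piece, and use $\degins j(E^{(q)}) = q\,\degins j(E)$ on the Frobenius/Verschiebung ends. The paper's version is slightly terser, but the logic is the same.
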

		\begin{proof} 
		By Proposition \ref{prop:decomposition.isogeny}, one may decompose $\varphi:E_1\to E_2$ as a composition
		\[ E_1 
		\xrightarrow{\ \Frob{p^e}\ } E_1^{(p^e)} 
		\xrightarrow{\ \psi \ }E_2^{(p^f)} 
		\xrightarrow{\ \Vers{p^f}\ } E_2, \]
		where $\psi$ is biseparable,  $p^e=\degins\varphi$, and $p^f = \degins\hat{\varphi}$.
		By the previous proposition, the inseparability degrees of $j(E_1^{(p^e)})$ and $j(E_2^{(p^f)})$ are equal.
		For  $i\in\{1, 2\}$ and any $a\in \Z_{\geq0}$, since $j(E_i^{(p^a)}) = j(E_i)^{p^a}$, 
		we have $\degins j(E_i^{(p^a)})  = p^a \, \degins(j(E_i))$.
		The claimed identity is then clear. 
		\end{proof}

\subsection{Isogenies and modular heights}\label{ss:isog.hmod} %%

We can now conclude our study of the effect of isogenies between elliptic curves on their modular height. 
We fix a function field $K$ as above.
In case  $K$ has characteristic $0$, all the inseparability degrees should be interpreted as being $1$.
The following result was announced in the introduction as Theorem \ref{itheo:comp.hmod}:

		\begin{theo} \label{theo:isog.hmod}
		Let $E_1$ and $E_2$ be two non-isotrivial elliptic curves defined over $\bar{K}$. 
		Assume that there is an isogeny $\varphi : E_1\to E_2$  between them. 
		Then one has
		\[ \hmod(E_2) = \frac{\degins\varphi}{\degins\hat{\varphi}} \ \hmod(E_1).\]
		In particular, biseparable isogenies preserve the modular height.
		\end{theo}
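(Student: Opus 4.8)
The plan is to reduce everything to the statement ``biseparable isogenies preserve the modular height'', and then to recover the general formula from the canonical decomposition of an isogeny (Proposition~\ref{prop:decomposition.isogeny}). Since both $E_1$ and $E_2$ are non-isotrivial, the hypotheses needed to invoke Parshin's invariance result and the decomposition result are met throughout.

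\emph{Step 1 (the biseparable case).} Let $\psi:F_1\to F_2$ be a biseparable isogeny between non-isotrivial elliptic curves over $\bar K$; in characteristic $0$ every isogeny is of this kind, so this case will cover the whole statement when $p=0$. First I would pick a finite extension $L/K$ over which $F_1$, $F_2$ and $\psi$ are all defined (an isogeny, being given by finitely many polynomials, is defined over a finite extension). By the semi-stable reduction theorem (\S\ref{ssec:semistab.red}) I then enlarge $L$ to a finite extension $L'/K$ such that \emph{both} $F_1\times_L L'$ and $F_2\times_L L'$ acquire semi-stable reduction over $L'$. The base-changed map $\psi':F_1\times_L L'\to F_2\times_L L'$ is still an isogeny over $L'$, and it is still biseparable since neither $\deg\psi$ nor $\deg\hat\psi$ changes under base change (Lemma~\ref{lemm:isog.ds.degree}). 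Now Corollary~\ref{coro:comp.hdiff.hmod.semistable}, applied over $L'$ and using that $\hmod$ depends only on the curve and not on the field of definition, gives $\hmod(F_i)=12\,\hdiff(F_i\times_L L'/L')$ for $i=1,2$, while Theorem~\ref{theo:isog.preserve.hdiff} applied to $\psi'$ yields $\hdiff(F_1\times_L L'/L')=\hdiff(F_2\times_L L'/L')$. Combining these equalities gives $\hmod(F_1)=\hmod(F_2)$, which is the last assertion of the theorem and settles the characteristic $0$ case (where $\degins\varphi=\degins\hat\varphi=1$).

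\emph{Step 2 (the general case, $p>0$).} I would feed $\varphi:E_1\to E_2$ into Proposition~\ref{prop:decomposition.isogeny}, writing it as
\[ E_1 \xrightarrow{\ \Frob{p^e}\ } E_1^{(p^e)} \xrightarrow{\ \psi\ } E_2^{(p^f)} \xrightarrow{\ \Vers{p^f}\ } E_2, \]
with $\psi$ biseparable, $p^e=\degins\varphi$ and $p^f=\degins\hat\varphi$. Since $j(E_i^{(p^a)})=j(E_i)^{p^a}$, and the Weil height satisfies $h(x^{p^a})=p^a\,h(x)$ for every $x\in\bar K$ (immediate from the definition, as $w(x^{p^a})=p^a\,w(x)$ at each place $w$), we get $\hmod(E_1^{(p^e)})=p^e\,\hmod(E_1)$ and $\hmod(E_2^{(p^f)})=p^f\,\hmod(E_2)$. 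Applying Step~1 to the biseparable isogeny $\psi$ gives $\hmod(E_1^{(p^e)})=\hmod(E_2^{(p^f)})$, hence $p^e\,\hmod(E_1)=p^f\,\hmod(E_2)$, i.e. $\hmod(E_2)=\tfrac{\degins\varphi}{\degins\hat\varphi}\,\hmod(E_1)$.

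\emph{Where the difficulty lies.} All the real content sits in Step~1, and there in Parshin's Theorem~\ref{theo:isog.preserve.hdiff}, which is already granted. What remains is purely bookkeeping: descending $E_1,E_2,\varphi$ to a common finite field of definition and then passing to a single finite extension over which \emph{both} curves become semi-stable while the isogeny — and its biseparability — survive, so that Corollary~\ref{coro:comp.hdiff.hmod.semistable} applies to each curve over the same base. I do not expect any further obstacle; in particular the behaviour of $h$ under $p$-th powers and of $j$ under Frobenius twists is elementary.
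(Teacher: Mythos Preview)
Your proposal is correct and follows essentially the same approach as the paper: decompose $\varphi$ via Proposition~\ref{prop:decomposition.isogeny}, use $\hmod(E^{(p^a)})=p^a\,\hmod(E)$ for the Frobenius and Verschiebung pieces, and handle the biseparable middle piece by passing to a common finite extension where both curves are semi-stable so that Corollary~\ref{coro:comp.hdiff.hmod.semistable} converts Parshin's invariance of $\hdiff$ (Theorem~\ref{theo:isog.preserve.hdiff}) into invariance of $\hmod$. The only organisational difference is that you isolate the biseparable case as a separate Step~1 before invoking the decomposition, whereas the paper runs the whole argument over a single semi-stable extension $L$ for $E_1,E_2$ and implicitly uses that the Frobenius twists $E_1^{(p^e)}$, $E_2^{(p^f)}$ remain semi-stable there; your ordering sidesteps that implicit claim.
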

		\begin{proof} 
		Fix a finite extension $L$ of $K$ over which both $E_1$, $E_2$ are defined. 
		We may and will also assume that $L$ is chosen so that $E_1$ and $E_2$ are semi-stable over $L$.
		As above (see Proposition \ref{prop:decomposition.isogeny}), 
		we factor the isogeny $\varphi : E_1\to E_2$  as a composition of 
		$\Frob{p^e}:E_1\to E_1^{(p^e)}$, $\psi : E_1^{(p^e)}\to E_2^{(p^f)}$ 
		and $\Vers{p^f} :E_2^{(p^f)}\to E_2$ where $\psi$ is biseparable and $p^e=\degins\varphi$, $p^f=\degins\hat{\varphi}$. 
		For simplicity, we denote $E_1^{(p^e)}$ by $E'_1$ and $E_2^{(p^f)}$ by $E'_2$.
		We write that 
		\[ \frac{\hmod(E_2)}{\hmod(E_1)} 
		= \frac{\hmod(E_2)}{\hmod(E'_2)} \  \frac{\hmod(E'_2)}{\hmod(E'_1)} \  \frac{\hmod(E'_1)}{\hmod(E_1)}. \]
		By definition, we have $j(E'_1) = j(E_1^{(p^e)}) = j(E_1)^{p^e}$, so that 
		$\hmod(E'_1)=p^e \, \hmod(E_1)$ and, similarly, we have $\hmod(E'_2) = p^f \,  \hmod(E_2)$. 
		We now use the ``semi-stable case'' of Proposition \ref{prop:compare.hdiff.hmod} and obtain that  
		\[ \hmod(E'_2) - \hmod(E'_1) 
		\stackrel{(i)}{=} 12 \, \hdiff(E'_2/L) -  12\, \hdiff(E'_1/L)
		\stackrel{(ii)}{=} 0.\]
		Here, equality $(i)$ follows from Proposition \ref{prop:compare.hdiff.hmod}: 
		the ``error term'' in the comparison of heights vanishes because the curves are semi-stable over $L$; 
		%(hence $\hdiff(E'_i/L)$ is really $\hstab(E'_i/L)$). 
		equality $(ii)$ comes from the fact that $\psi:E'_1\to E'_2$, being biseparable, 
		preserves the differential height (see Theorem \ref{theo:isog.preserve.hdiff}).

		Therefore $\hmod(E'_1)=\hmod(E'_2)$, and the result is proved.
		\end{proof}

\begin{rema}\label{rema:alternative.approach} 
An alternative (and  rather natural) approach to proving Theorem~\ref{theo:isog.hmod} would be to rely 
on considering the modular polynomial relations satisfied by the $j$-invariants of isogenous elliptic curves. 
We now explain why the most immediate properties of these modular relations  are insufficient to conclude. 

The crucial step towards %in the proof of 
Theorem \ref{theo:isog.hmod} consists in showing that \emph{biseparable cyclic isogenies preserve the modular height}.  
Let, thus, $j_1, j_2\in\bar{K}$ be the $j$-invariants of two non-isotrivial elliptic curves 
linked by a biseparable cyclic isogeny.
Fix a finite extension $L=k'(C')$ of $K$ containing $j_1$ and $j_2$, and 
a biseparable cyclic isogeny $\varphi$ between two elliptic curves over $L$ 
with respective $j$-invariants $j_1$ and $j_2$. 

Writing $N:=\deg\varphi$ for the degree of $\varphi$, 
we then have $\Phi_N(j_1, j_2)=0$ in~$L$, where $\Phi_N(X,Y)\in\Z[X,Y]$ 
denotes the $N$-th modular polynomial.
Recall from (see \cite[Chapter V, \S2]{LangEllFun} that~$\Phi_N(X,Y)$  has coefficients in $\Z$, 
is symmetric in $X,Y$, and that it is monic of degree $\psi(N)$ in each variable. 

In this situation, using integrality and symmetry of the modular relation $\Phi_N(j_1, j_2)=0$,
one can show that the divisors of poles $\div_\infty(j_1)$, $\div_{\infty}(j_2)$ have the same support. 
One cannot, however, conclude that $h(j_1)=h(j_2)$ from these elementary results on $\Phi_N(X,Y)$ alone,
as illustrated by the following example.
For any integer $d\geq 2$, consider the polynomial 
${P_d(X,Y) := X^{d+1} + Y^{d+1} - (XY)^{d} -XY}\in\Z[X,Y]$:
it too is symmetric in $X,Y$, and is monic of degree $d+1$ in each variable.
For any $y_1\in L\smallsetminus k'$, we let $y_2:=y_1^d$.
We then have $P_d(y_1, y_2)=0$, even though $y_2$ has height $h(y_2)= d \,  h(y_1) > h(y_1)$.
	
Therefore, any attempt at proving Theorem~\ref{theo:isog.hmod}   through this route must make use of finer results on~$\Phi_N(X,Y)$, 
such as irreducibility or arithmetic facts about its coefficients (see \cite{Cohen} or \cite[\S4]{Paz18}). 
We think it unlikely that these considerations lead to a simpler proof of Theorem \ref{theo:isog.hmod} in arbitrary characteristic.
\end{rema}

\subsection{Isogenies and differential heights}\label{ss:isog.hdiff} %%

As we saw in \S\ref{ss:ds.isog.hdiff}, 
biseparable isogenies preserve the differential height (Theorem~\ref{theo:isog.preserve.hdiff}). 
If $K$ has characteristic $0$, all isogenies are biseparable so that Theorem~\ref{theo:isog.preserve.hdiff} completely solves the problem of 
describing the effect of an arbitrary isogeny on the differential height.
In this subsection, we thus assume that the function field $K$ has positive characteristic $p$ 
and attempt to describe this effect. 
In view of the decomposition given by Proposition~\ref{prop:decomposition.isogeny}, 
it is enough to focus on the effect of the Frobenius isogeny on $\hdiff$: 
this is what we elucidate now. 

		\begin{lemm}\label{lemm:hdiff}
		Let $L$ be a finite extension of $K$.
		For any non-isotrivial elliptic curve $E$ over $L$, there exists $\alpha(E/L) \geq 1$ such that, 
		for any power $q$ of $p$, we have 
		\begin{equation}\label{prems}
		\alpha(E/L)^{-1} \, q \, \hdiff(E/L) \leq\hdiff(E^{(q)}/L)\leq q \, \hdiff(E/L).
		\end{equation}
		If, moreover, $E$ is semi-stable over $L$, we have $\alpha(E/L)=1$, so that $\hdiff(E^{(q)}/L) = q \, \hdiff(E/L)$.
		\end{lemm}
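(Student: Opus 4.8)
The plan is to reduce the statement to a question about minimal discriminant divisors via Lemma~\ref{lemm:silverman.hdiff}, and then analyse place-by-place how the discriminant valuations behave under the Frobenius twist $E\mapsto E^{(q)}$. Recall that $12\,\hdiff(E/L) = \deg\Deltamin(E/L)/[L:K]$ and similarly for $E^{(q)}$, so it is enough to compare the local exponents $\delta_w(E)$ and $\delta_w(E^{(q)})$ at each place $w$ of $L$.

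First I would observe that if a Weierstrass model of $E$ with coefficients $a_i$ is integral at $w$, then raising all coefficients to the $q$-th power gives an integral model of $E^{(q)}$ whose discriminant is $\Delta^q$; hence $w(\Delta(E^{(q)}\text{-model})) = q\,w(\Delta(E\text{-model}))$, and taking a model that is minimal at $w$ for $E$ yields $\delta_w(E^{(q)}) \leq q\,\delta_w(E)$. Summing over all places gives the upper bound $\hdiff(E^{(q)}/L) \leq q\,\hdiff(E/L)$ directly, with no constant needed. For the lower bound, the point is that the minimal model of $E^{(q)}$ at $w$ need not be the $q$-th power of the minimal model of $E$ at $w$: there can be cancellation, i.e.\ $\delta_w(E^{(q)})$ can be strictly smaller than $q\,\delta_w(E)$. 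However, at a place $w$ where $E$ is \emph{semi-stable}, the Kodaira--N\'eron theorem (see \S\ref{ssec:semistab.red}) gives $\delta_w(E) = -w(j(E))$; since $j(E^{(q)}) = j(E)^q$, we get $w(j(E^{(q)})) = q\,w(j(E))$, and (as $E$ semi-stable at $w$ forces $E^{(q)}$ semi-stable at $w$ as well, its $j$-invariant still having a pole there exactly when $E$'s does) $\delta_w(E^{(q)}) = -w(j(E^{(q)})) = q\,\delta_w(E)$. Summing over all places in the semi-stable case gives the exact equality $\hdiff(E^{(q)}/L) = q\,\hdiff(E/L)$, which proves the last sentence of the lemma with $\alpha(E/L) = 1$.

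For the general (not necessarily semi-stable) case, the bound at semi-stable places is still an exact equality $q\,\delta_w(E)$ as above, so the only loss occurs at the finitely many places $w$ where $E$ has additive reduction. At each such place, both $\delta_w(E)$ and $\delta_w(E^{(q)})$ lie in the ranges permitted by Tate's table, and in particular $\delta_w(E^{(q)}) \leq q\,\delta_w(E)$ while $\delta_w(E^{(q)}) \geq 0$; the ratio $q\,\delta_w(E)/\delta_w(E^{(q)})$ is therefore bounded, for \emph{this fixed} $E$ and $w$, once we know $\delta_w(E^{(q)})$ stays bounded below away from $0$ whenever $\delta_w(E) > 0$. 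The cleanest way to package this is to set
\[
\alpha(E/L) := \frac{\deg\Deltamin(E/L)}{\sum_{w} \max\{1,\delta_w(E)\}\cdot(\text{something})},
\]
but more simply: define $\alpha(E/L)$ to be the maximum over the finitely many bad places $w$ (together with the value $1$) of the quantities needed to make $\alpha(E/L)^{-1}q\,\delta_w(E) \leq \delta_w(E^{(q)})$ hold for every $q$; since at each bad place $\delta_w(E^{(q)})$ takes only finitely many possible values bounded in $[0,12]$ and $q\,\delta_w(E)$ is the only term growing with $q$, the supremum of $q\,\delta_w(E)/\delta_w(E^{(q)})$ over all $q$ with $\delta_w(E^{(q)}) \neq 0$ is finite provided $\delta_w(E^{(q)})$ is never $0$ when $\delta_w(E)\neq 0$. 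The latter holds because $E^{(q)}$ has bad reduction at $w$ precisely when $E$ does (they have isomorphic reduction type up to a twist visible only at the level of the special fibre, and in any case $j(E^{(q)}) = j(E)^q$ is non-integral at $w$ iff $j(E)$ is). Summing the local inequalities $\alpha(E/L)^{-1}q\,\delta_w(E) \leq \delta_w(E^{(q)}) \leq q\,\delta_w(E)$ over all $w$ and dividing by $12[L:K]$ yields \eqref{prems}.

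The main obstacle is the lower bound in the additive-reduction case: one must genuinely control $\delta_w(E^{(q)})$ from below, and it is not \emph{a priori} obvious that it cannot collapse to something much smaller than $q\,\delta_w(E)$, or even that the twist does not improve the reduction. The resolution is that reduction type is detected by the $j$-invariant together with the conductor/discriminant exponent, and $v(j(E^{(q)})) = q\,v(j(E))$ pins down the semi-stable part exactly while the additive contribution is universally bounded by $12$; so the only place the constant $\alpha(E/L)$ is truly needed is to absorb the finitely many additive places, and its dependence on $E$ (not just on $q$) is essential and expected. I would present the argument by first doing the two easy inequalities uniformly in $q$, then isolating the semi-stable places where equality holds, and finally defining $\alpha(E/L)$ explicitly in terms of the finitely many additive places of $E/L$.
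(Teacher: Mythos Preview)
Your upper bound and your treatment of the semi-stable case are correct and essentially match the paper: raising the coefficients of a minimal $w$-integral model to the $q$-th power gives an integral model of $E^{(q)}$ with discriminant $\Delta_{E,w}^q$, whence $\delta_w(E^{(q)})\leq q\,\delta_w(E)$; and at semi-stable places Kodaira--N\'eron yields $\delta_w = -w(j)$, so $\delta_w(E^{(q)}) = q\,\delta_w(E)$ exactly.

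There is, however, a genuine gap in your lower bound for the general case. Your plan is to find a single $\alpha(E/L)$ making the \emph{local} inequality $\alpha^{-1} q\,\delta_w(E)\leq \delta_w(E^{(q)})$ hold at every place and for every $q$, and then sum. But such an $\alpha$ does not exist. Take a place $w$ where $E$ has additive, potentially good reduction (e.g.\ type $II$, $III$, $I_0^\ast$, \dots, so $w(j(E))\geq 0$). Then $w(j(E^{(q)}))=q\,w(j(E))\geq 0$ as well, and the minimal discriminant valuation $\delta_w(E^{(q)})$ is obtained from $q\,\delta_w(E)$ by subtracting a multiple of $12$ until one lands in a bounded range; in particular $\delta_w(E^{(q)})$ stays bounded as $q\to\infty$ while $q\,\delta_w(E)\to\infty$. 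Your own sentence ``$\delta_w(E^{(q)})$ takes only finitely many possible values bounded in $[0,12]$ and $q\,\delta_w(E)$ is the only term growing with $q$, [hence] the supremum of $q\,\delta_w(E)/\delta_w(E^{(q)})$ \dots\ is finite'' draws exactly the wrong conclusion: a bounded denominator and a numerator growing with $q$ make the ratio \emph{unbounded}.

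The paper avoids this by arguing \emph{globally} rather than place-by-place. From Proposition~\ref{prop:compare.hdiff.hmod} one has $\deg\Deltamin(E^{(q)}/L)\geq \deg\div_\infty(j(E^{(q)})) = q\,\deg\div_\infty(j(E))$ and $\deg\Deltamin(E/L)\leq \deg\div_\infty(j(E)) + 12\deg\Acal(E/L)$, where $\Acal(E/L)$ is the divisor of non-semi-stable places. Dividing, the $q$'s cancel and one gets
\[
\frac{q\,\deg\Deltamin(E/L)}{\deg\Deltamin(E^{(q)}/L)} \ \leq\ 1+\frac{12\,\deg\Acal(E/L)}{\deg\div_\infty(j(E))} \ =:\ \alpha(E/L),
\]
which is finite precisely because $E$ is non-isotrivial (so $\deg\div_\infty(j(E))>0$). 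The point is that the unbounded local losses at additive places of potential good reduction are compensated globally by the growth of $\delta_w(E^{(q)})$ at the (necessarily existing) places where $j(E)$ has a pole; your local summation never sees this compensation.
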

		\begin{proof} 
		To lighten notation, we write $E':=E^{(q)}$.
		For any place $w$ of $L$, we denote the  ring of integers at~$w$ by $\O_w\subset L$.
		We may pick a minimal $w$-integral Weierstrass model for $E$ of the form:
		\[E: \quad y^2+a_{1, w}xy+a_{3, w}y=x^3+a_{2, w}x^2+a_{4, w}x+a_{6, w},\]
		with $a_{1,w}, a_{2,w}, a_{3,w}, a_{4,w}, a_{6,w}\in \O_w$.
		By definition, the discriminant 
		\[\Delta_{E,w} = \Delta(a_{1,w}, a_{2,w}, a_{3,w}, a_{4,w}, a_{6,w})\in \O_w\]
		of this model has minimal valuation $w(\Delta_{E,w})\in\Z_{\geq 0}$ 
		among all choices of $w$-integral Weierstrass coefficients $a_{1,w}$, $a_{2,w}$, $a_{3,w}$, $a_{4,w}$, $a_{6,w}$ for $E$.
		A Weierstrass model for the Frobenius twist  $E'$ of $E$ is then given by:
		\begin{equation}\label{eq:Wmod.p}
		E': \quad y^2+a_{1, w}^{q}xy+a_{3, w}^{q}y=x^3+a_{2, w}^{q}x^2+a_{4, w}^{q}x+a_{6, w}^{q}.	
		\end{equation}
		We note that $a_{1, w}^{q}, a_{2, w}^{q}, a_{3, w}^{q}, a_{4, w}^{q}, a_{6, w}^{q}$ all lie in $\O_w$, 
		so that \eqref{eq:Wmod.p} is   a $w$-integral model for $E'$.
		The discriminant $\Delta'_w$ of this model is clearly equal to $\Delta_{E,w}^{q}$. 
		Let $\Delta_{E', w}$ be the discriminant of a minimal $w$-integral Weierstrass model of $E'$.
		The difference $w(\Delta'_w) -w(\Delta_{E', w})$ is then a non-negative integral multiple of $12$, 
		for $\Delta'_w$ and $\Delta_{E', w}$ differ by the $12$-th power of an element of $\O_w$.
		In particular, we obtain that $q \, w(\Delta_{E, w})\geq w(\Delta_{E',w})$.
		Multiplying this inequality by $\deg w$, and summing over all places $w$ of $L$ yields that 
		\[q \, \deg\Delta_{\min}(E/L) \geq \deg\Delta_{\min}(E'/L).\]
		The right-most  inequality in \eqref{prems} ensues immediately.
		To prove the other inequality in \eqref{prems}, we argue as follows.
		The proof of Proposition \ref{prop:compare.hdiff.hmod} implies the bounds:
		\begin{equation}\label{eq:pouet}
		0 \leq \deg\Delta_{\min}(E/L) - \deg\div_\infty(j(E)) \leq 12 \deg \Acal(E/L),
		\end{equation}
		where $\Acal(E/L)=\sum_{w\text{ not s.s.}} w$ is the divisor whose support consists in 
		the places $w$ of $L$ where $E$ does not have semi-stable reduction. 
		In particular, we have
		\[ \deg \Delta_{\min}(E'/L) \geq \deg\div_\infty(j(E')) \qquad \text{ and } \qquad 
		\deg\Delta_{\min}(E/L) \leq  \deg \div_\infty(j(E)) + 12 \deg \Acal(E/L).\]
		Therefore, since $j(E')= j(E)^{q}$, we deduce that
		\begin{align*}
		\frac{q \, \deg\Delta_{\min}(E/L)}{\deg\Delta_{\min}(E'/L)} 
		& \leq \frac{q  \, \deg\Delta_{\min}(E/L)}{\deg\div_\infty(j(E'))}
		= \frac{q \, \deg\Delta_{\min}(E/L)}{q \, \deg\div_\infty(j(E))} \\
		&\leq \frac{ \deg \div_\infty(j(E)) + 12 \deg \Acal(E/L)}{\deg\div_\infty(j(E))}
		= \alpha(E/L),
		\end{align*}
		where we have set $\alpha(E/L) := 1+ 12\deg \Acal(E/L)/(\deg\div_\infty j(E))$. 
		From which we obtain that 
		\[q \, \hdiff(E/L) \leq \alpha(E/L) \, \hdiff(E'/L).\]
		It is clear that $\alpha(E/L)\geq 1$, and that $\alpha(E/L)=1$ if and only if $E/L$ is semi-stable.
		The above proves both the left-most inequality in \eqref{prems} and the last assertion of the Lemma.
		\end{proof}

\begin{rema}\label{rema:isog.hdiff.charnot23} 
We assume here that $K$ has characteristic $p\neq 2, 3$.
Let $E$ be a non-isotrivial elliptic curve over $L$, and $E':=E^{(q)}$ be its $q$-th power Frobenius twist. 
Since $E$ and $E'$ are $L$-isogenous, they have the same reduction behaviour at all places of $L$.

If  $w$ is a place  where $E$ has multiplicative reduction of type $\mathbf{I}_n$, for some $n\geq 1$, 
then $E'$ also has multiplicative reduction at $w$, and its fiber at $w$ is of type $\mathbf{I}_{n  \, q}$. 
Therefore we have 
\[w(\Delta_{\min}(E/L)) \leq q \, w(\Delta_{\min}(E/L)) =q \, n = w(\Delta_{\min}(E'/L)).\]
If $w$ is a place where $E$ has additive reduction, then  
by inspection of the possible Kodaira--N\'eron reduction types of $E'$ at $w$ 
(see the table page\ 365 of \cite{ATAEC}, which is only valid in characteristic $p\neq 2, 3$), 
we find  that $w(\Delta_{\min}(E/L)) \leq 12 \, w(\Delta_{\min}(E'/L))$. 
Therefore, for any place $w$ where either of $E$ and $E'$ have bad reduction,  we have  
$w(\Delta_{\min}(E/L)) \leq 12 \, w(\Delta_{\min}(E'/L))$.
Multiplying this inequality by $\deg w$ and summing over all places $w$ of $L$, we obtain that
\[\frac{1}{12} \, \hdiff(E/L)\leq \hdiff(E^{(q)}/L).\] 
This may be viewed as a weak but uniform version of the lower bound in \eqref{prems}.	
\end{rema}

We can now give the final estimate of this subsection.
		\begin{prop}\label{prop:iso.gen} 
		Let $E_1, E_2$ be a pair of \emph{non-isotrivial} elliptic curves over a finite extension $L$ of $K$. 
		Assume that there exists an isogeny $\varphi: E_1 \to E_2$. 
		If both $E_1$  and $E_2$ are semi-stable over $L$, we have
		\[ \hdiff(E_2/L)  = \frac{\degins\varphi}{\degins\hat{\varphi}} \ \hdiff(E_1/L).\]
		In general, we have 
		\[ \alpha(E_2/L)^{-1}
		\leq \frac{\degins\varphi}{\degins\hat{\varphi}} \ \frac{\hdiff(E_1/L)}{\hdiff(E_2/L)}
		\leq \alpha(E_1/L),\]
		where $\alpha(E_1/L), \alpha(E_2/L)\geq 1$ are the same as in Lemma \ref{lemm:hdiff}.
		\end{prop}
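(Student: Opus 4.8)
The plan is to reduce to two facts already in hand: biseparable isogenies preserve the differential height (Theorem~\ref{theo:isog.preserve.hdiff}), and the differential height of a Frobenius twist is controlled by Lemma~\ref{lemm:hdiff}. First I would invoke Proposition~\ref{prop:decomposition.isogeny} to write $\varphi$ as the composition
\[ E_1 \xrightarrow{\ \Frob{p^e}\ } E_1^{(p^e)} \xrightarrow{\ \psi\ } E_2^{(p^f)} \xrightarrow{\ \Vers{p^f}\ } E_2, \]
with $\psi$ biseparable, $p^e=\degins\varphi$ and $p^f=\degins\hat{\varphi}$. The curves $E_1^{(p^e)}$ and $E_2^{(p^f)}$ are again non-isotrivial (a Frobenius twist of a non-isotrivial curve is non-isotrivial) and defined over $L$, so Theorem~\ref{theo:isog.preserve.hdiff} applied to $\psi$ yields the pivotal identity $\hdiff(E_1^{(p^e)}/L) = \hdiff(E_2^{(p^f)}/L)$.

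Next I would apply Lemma~\ref{lemm:hdiff} to $E_1$ (with $q=p^e$) and to $E_2$ (with $q=p^f$), obtaining
\[ \alpha(E_1/L)^{-1}\, p^e\, \hdiff(E_1/L) \leq \hdiff(E_1^{(p^e)}/L) \leq p^e\, \hdiff(E_1/L) \]
together with the analogue for $E_2$, $p^f$ and $\alpha(E_2/L)$. Substituting the identity above and combining the chains: from $\alpha(E_1/L)^{-1}\, p^e\, \hdiff(E_1/L) \leq \hdiff(E_1^{(p^e)}/L) = \hdiff(E_2^{(p^f)}/L) \leq p^f\, \hdiff(E_2/L)$ one reads off the upper bound, and from $\alpha(E_2/L)^{-1}\, p^f\, \hdiff(E_2/L) \leq \hdiff(E_2^{(p^f)}/L) = \hdiff(E_1^{(p^e)}/L) \leq p^e\, \hdiff(E_1/L)$ the lower bound. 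Dividing through by $\hdiff(E_2/L)$, which is positive because $E_2$ is non-isotrivial, and recalling $p^e=\degins\varphi$, $p^f=\degins\hat{\varphi}$, this is exactly the claimed two-sided estimate.

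Finally, if both $E_1$ and $E_2$ are semi-stable over $L$, the last assertion of Lemma~\ref{lemm:hdiff} gives $\alpha(E_1/L)=\alpha(E_2/L)=1$, so the two-sided estimate collapses to the equality $\hdiff(E_2/L)=(\degins\varphi/\degins\hat{\varphi})\,\hdiff(E_1/L)$. There is no genuine obstacle here: the argument is a bookkeeping exercise chaining together Proposition~\ref{prop:decomposition.isogeny}, Theorem~\ref{theo:isog.preserve.hdiff} and Lemma~\ref{lemm:hdiff}. The only points requiring a little care are keeping track of which of $\alpha(E_1/L)$ and $\alpha(E_2/L)$ controls which side of the inequality, and noting that the positivity of $\hdiff(E_2/L)$ is what licenses passing to the ratio $\hdiff(E_1/L)/\hdiff(E_2/L)$.
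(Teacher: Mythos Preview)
Your proof is correct and follows essentially the same approach as the paper: decompose $\varphi$ via Proposition~\ref{prop:decomposition.isogeny}, use Theorem~\ref{theo:isog.preserve.hdiff} for the biseparable middle factor, and control the Frobenius/Verschiebung factors with Lemma~\ref{lemm:hdiff}. The only cosmetic difference is that the paper derives the semi-stable equality separately from Theorem~\ref{theo:isog.hmod} together with Corollary~\ref{coro:comp.hdiff.hmod.semistable}, whereas you obtain it as the $\alpha(E_i/L)=1$ special case of the general inequality; both routes are valid.
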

		\begin{proof}
		The semi-stable case is a direct consequence of 
		Theorem \ref{theo:isog.hmod} and Corollary \ref{coro:comp.hdiff.hmod.semistable}.
		In the general case, by Proposition \ref{prop:decomposition.isogeny},
		the isogeny $\varphi:E_1\to E_2$ decomposes as 
		\[ \varphi: E_1 \xrightarrow{\ \Frob{p^e}\ } E_1^{(p^e)} \xrightarrow{\ \psi \ }E_2^{(p^f)} \xrightarrow{\ \Vers{p^f}\ } E_2, \]
		where $\psi$ is a biseparable isogeny,  $p^e=\degins\varphi$, and $p^f = \degins\hat{\varphi}$. 
		We then write that
		\[ \frac{\hdiff(E_1/L)}{\hdiff(E_2/L)} 
		= \frac{\hdiff(E_1/L)}{\hdiff(E_1^{(p^e)}/L)} \ 
		\frac{\hdiff(E_1^{(p^e)}/L)}{\hdiff(E_2^{(p^f)}/L)} \ 
		\frac{\hdiff(E_2^{(p^f)}/L)}{\hdiff(E_2/L)}. \]
		Since the isogeny $\psi:E_1^{(p^e)}\to E_2^{(p^f)}$ is biseparable, 
		Theorem~\ref{theo:isog.preserve.hdiff} yields that ratio of ${\hdiff(E_1^{(p^e)}/L)}$ to ${\hdiff(E_2^{(p^f)}/L)}$ is $1$. 
		To conclude, one applies inequality \eqref{prems} in Lemma \ref{lemm:hdiff} to the other two ratios.
		\end{proof}

\begin{rema} 
If $K$ has characteristic $p\neq 2, 3$, 
we may carry out the same argument using the bound in Remark \ref{rema:isog.hdiff.charnot23} instead of inequality \eqref{prems}; 
we would then obtain the following bound. 
If $\varphi:E_1 \to E_2$ is an isogeny between two non-isotrivial elliptic curves $E_1, E_2$ over $L$, we have
\[(12)^{-1}\leq \frac{\degins\varphi}{\degins\hat{\varphi}} \ \frac{\hdiff(E_1/L)}{\hdiff(E_2/L)}\leq 12.\]
\end{rema}

\subsection{A surprising consequence on isogeny classes}\label{ss:isog.classes} %%

Let $E$ be a non CM elliptic curve over $\bar{\Q}$. 
For any $B\geq0$, consider the set
\[\Jscr_\Q(E, B) 
= \big\{E'/\bar{\Q} :  E' \text{ is isogenous to } E \text{ and } ht(j(E'))\leq B \big\}\big/\,\bar{\Q}\text{-isomorphism},\]
where $ht:\bar{\Q}\to\R$ is the standard logarithmic absolute Weil height on $\bar{\Q}$. 
It is known that $\Jscr_\Q(E, B)$ is a finite set (see Lemma~5.9 in \cite{Habegger}); 
the main input in the proof of that statement is the estimate from Th\'eor\`eme~1.1 in \cite{SzpiroUllmo} which asserts that 
\[ ht(j(E')) \geq ht(j(E)) + \frac{1}{2}\log\deg\varphi - o(\log\deg\varphi) \]
if there is a cyclic isogeny $\varphi : E \to E'$ (where the implicit constants in the error term depend on $E$). 
Using our results in \S\ref{ss:isog.hmod}, we study the analogue to $\Jscr_\Q(E, B)$ in the context of function fields.
\\

Let $K$ be a function field as in section \ref{sec:intro.FF}, with characteristic $p\geq 0$.
Fix a non-isotrivial elliptic curve~$E$ over $K$, and write $\Iscr_{\mathrm{bs}}(E)$ for the set of elliptic curves over $\bar{K}$ 
which are biseparably isogenous to $E$ (\ie{}, such that there is a biseparable isogeny $E\to E'$). 
When the characteristic of $K$ is $0$, all isogenies are biseparable, 
so that $\Iscr_{\mathrm{bs}}(E)$ is the whole isogeny class of $E$.
For any real number $B\geq 0$, consider the set
\begin{align*}
\Jscr_K(E, B) &= \big\{E'\in\Iscr_{\mathrm{bs}}(E) : \hmod(E')\leq B \big\}\big/\,\bar{K}\text{-isomorphism} \\	
&= \big\{j(E'), \ E'\in\Iscr_{\mathrm{bs}}(E) \text{ with } \hmod(E')\leq B \big\}.
\end{align*}

The biseparability condition in $\Iscr_{\mathrm{bs}}(E)$ is added in order to avoid trivial situations in positive characteristic: 
without this condition, if $K$ has characteristic $p$, we would indeed directly obtain from Theorem~\ref{theo:isog.hmod} 
infinitely many (isomorphism classes of) elliptic curves over $\bar{K}$ which are isogenous to $E$ and 
have bounded modular height by considering the sequence
\[E \xrightarrow[]{\ \Vers{p} \ } E^{(1/p)} \xrightarrow[]{\ \Vers{p} \ }  \dots 
\xrightarrow[]{\ \Vers{p} \ } E^{(1/p^{n-1})}\xrightarrow[]{\ \Vers{p} \ } E^{(1/p^{n})} \xrightarrow[]{\ \Vers{p} \ } \dots.\]

If $B< \hmod(E)$, Theorem~\ref{theo:isog.hmod} implies that $\Jscr_K(E, B)=\varnothing$.
If $B\geq \hmod(E)$, in stark contrast to the above mentioned result of Habegger, we prove:
		\begin{prop}\label{prop:infinite}
		If $B\geq \hmod(E)$, the set $\Jscr_K(E, B)$ is infinite.	
		\end{prop}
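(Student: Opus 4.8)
The plan is to observe first that the hypothesis $B\geq\hmod(E)$ renders the height bound in the definition of $\Jscr_K(E,B)$ vacuous, and then to exhibit infinitely many pairwise non-isomorphic curves in the biseparable isogeny class of $E$. Indeed, by Theorem~\ref{theo:isog.hmod}, any $E'\in\Iscr_{\mathrm{bs}}(E)$ is linked to $E$ by a biseparable isogeny and hence satisfies $\hmod(E')=\hmod(E)\leq B$; thus $\Jscr_K(E,B)$ is simply the set of $\bar{K}$-isomorphism classes of curves in $\Iscr_{\mathrm{bs}}(E)$, and it suffices to prove that this set is infinite.

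To produce many biseparably isogenous curves, I would fix a prime $\ell$ different from $p$ (so $\ell$ is an arbitrary prime if $p=0$). Since $\ell\neq p$ we have $E[\ell](\bar K)\simeq(\Z/\ell\Z)^2$, so $E[\ell]$ contains exactly $\ell+1$ subgroups of order $\ell$, one for each point of $\P^1(\F_\ell)$. For each such subgroup $C$, Proposition~\ref{prop:isog.quotient} furnishes an elliptic curve $E/C$ over $\bar K$ and a separable isogeny $\pi_C:E\to E/C$ of degree $\ell$ with kernel $C$; as $\deg\pi_C=\ell$ is coprime to the characteristic, Lemma~\ref{lemm:isog.ds.degree} shows $\pi_C$ is biseparable, so $E/C\in\Iscr_{\mathrm{bs}}(E)$ and its $\bar K$-isomorphism class lies in $\Jscr_K(E,B)$.

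The crux is to check that, for a fixed $\ell$, the $\ell+1$ curves $E/C$ are pairwise non-$\bar K$-isomorphic. Suppose $u:E/C\xrightarrow{\ \sim\ }E/C'$ were an isomorphism for two order-$\ell$ subgroups $C,C'$. Then $u\circ\pi_C$ and $\pi_{C'}$ both belong to $\Hom(E,E/C')$. Since $E$ is non-isotrivial, so is $E/C'$ (it is isogenous to $E$), and these curves have no complex multiplication by Lemma~\ref{lemm:CM.isotriv}; hence $\Hom(E,E/C')$ is free of rank $1$ (see \S\ref{ss:isogenies.prelim}), generated by some $\varphi_0$. Writing $u\circ\pi_C=m\varphi_0$ and $\pi_{C'}=m'\varphi_0$ with $m,m'\in\Z\smallsetminus\{0\}$ and comparing degrees gives $\ell=m^2\deg\varphi_0=(m')^2\deg\varphi_0$, forcing $m,m'\in\{\pm1\}$ and $\deg\varphi_0=\ell$. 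Thus $u\circ\pi_C=\pm\pi_{C'}$, and since $[-1]$ is an automorphism we get $C=\ker\pi_C=\ker(\pm\pi_{C'})=C'$, a contradiction. Therefore $|\Jscr_K(E,B)|\geq\ell+1$ for every prime $\ell\neq p$, and letting $\ell\to\infty$ shows $\Jscr_K(E,B)$ is infinite.

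The only delicate point is this last step: one must track kernels under isomorphisms and use the rank-one structure of $\Hom$ coming from the absence of complex multiplication (Lemma~\ref{lemm:CM.isotriv}); the remainder is routine. One could equally well let $C$ range over the order-$\ell$ subgroups for \emph{all} primes $\ell\neq p$ simultaneously and note, by the same degree comparison, that $E/C\simeq E/C'$ with $|C|=\ell$ and $|C'|=\ell'$ prime forces $\ell=\ell'$, thereby obtaining an explicit infinite family at once; but the argument above already suffices.
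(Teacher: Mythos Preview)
Your proof is correct and rests on the same two ingredients as the paper's: quotients by cyclic subgroups of prime order coprime to $p$ (via Proposition~\ref{prop:isog.quotient} and Lemma~\ref{lemm:isog.ds.degree}) together with the rank-$1$ structure of $\Hom$-modules coming from Lemma~\ref{lemm:CM.isotriv}. The execution differs in a pleasant way. The paper chooses \emph{one} cyclic subgroup of each prime order $\ell\neq p$ and distinguishes the resulting quotients across \emph{different} primes by composing with a dual isogeny to land in $\mathrm{End}(E)\simeq\Z$, where the degree identity $d^2=\ell_1\ell_2$ forces $\ell_1=\ell_2$. You instead fix a single prime $\ell$ and distinguish \emph{all} $\ell+1$ quotients by working directly inside $\Hom(E,E/C')\simeq\Z$ and comparing kernels; this yields the sharper count $|\Jscr_K(E,B)|\geq\ell+1$ for every $\ell\neq p$, from which infiniteness follows by letting $\ell\to\infty$. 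Both routes are equally short; yours extracts a bit more (the $\ell+1$ quotients at a fixed level are genuinely distinct), while the paper's is marginally more self-contained in that it only invokes $\mathrm{End}(E)=\Z$ rather than the rank of a general $\Hom$-module.
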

		\begin{proof}
		Given an integer $n\geq 1$ which is coprime to the characteristic $p$ of $K$, 
		we may pick a  point $P_n\in E(\bar{K})$ of exact order $n$.
		We then let $\pi_n$ denote the quotient isogeny $\pi_n:E\to E/\langle P_n\rangle$ from $E$ to its quotient~$E_n$ 
		by the subgroup generated by $P_n$.
		The isogeny $\pi_n$ has degree $n$ (see Proposition \ref{prop:isog.quotient}), 
		and is therefore biseparable (see Lemma~\ref{lemm:isog.ds.degree}). 

		This construction provides a sequence $(E_n)_{n}$ of elliptic curves over $\bar{K}$ 
		indexed by prime-to-$p$ integers. 
		It is clear that the curve $E_n$ is biseparably isogenous to $E$.
		Applying  Theorem \ref{theo:isog.hmod} to the isogeny $\pi_n : E\to E_n$ yields that $\hmod(E_n)=\hmod(E)$.
		Therefore, the isomorphism class of $E_n$ does belong to $\Jscr_K(E, B)$, thanks to our assumption that $\hmod(E)\leq B$. 

		In order to conclude the proof, it now suffices to show that 
		the above-constructed sequence $(E_n)_{n}$ provides infinitely many isomorphism classes. 
		Let $\ell_1,\ell_2$ be two prime numbers, both coprime to $p$.  
		Assume that $E_{\ell_1}\simeq E_{\ell_2}$ and denote the isomorphism by $\iota: E_{\ell_1}\to E_{\ell_2}$.
		The composition $\iota\circ\pi_{\ell_1}$ is an isogeny $E\to E_{\ell_2}$ of degree $\ell_1$. 
		Hence, the isogeny $\psi : E\to E$ defined by $\psi = \widehat{\pi_{\ell_2}}\circ\iota\circ\pi_{\ell_1}$ 
		is a biseparable endomorphism of $E$ of degree $\ell_1\ell_2$.
		Since $E$ is non-isotrivial, its endomorphism ring is trivial (see \S\ref{ssec:CM.isotriv}); 
		hence, there is an integer $d$ such that $\psi=[d]$.
		Taking degrees, we obtain that $d^2=\ell_1\ell_2$.
		Since $\ell_1$ and $\ell_2$ are primes, we deduce that $\ell_1=\ell_2=d$.
		We thereby conclude that the sequence $(E_\ell)_{\ell}$ 
		indexed by prime numbers $\ell\neq p$ yields infinitely many elements in $\Jscr_K(E, B)$.
		\end{proof}

The main difference between the number field and the function field cases lies in 
how much the modular height varies along an isogeny class.
Over number fields, the above-mentioned Th\'eor\`eme 1.1 in \cite{SzpiroUllmo} shows that 
$ht(j(E'))$ \emph{does} vary as $E'/\bar{\Q}$ runs through the isogeny class of $E/\bar{\Q}$, 
provided that $E$ has no~CM. 
Therefore, bounding $ht(j(E'))$ sufficiently constrains the degree of possible isogenies $E\to E'$ that 
the set $\Jscr_\Q(E,B)$ is finite.
Over a function field $K$, on the contrary, our Theorem \ref{theo:isog.hmod} shows that 
$\hmod(E')$ remains constant as $E'$ runs through $\Iscr_{\mathrm{bs}}(E)$, provided that $E$ is non-isotrivial.
The absence of constraints on the degree of isogenies $E\to E'$ other 
than coprimality with the characteristic, allows the set $\Jscr_K(E,B)$ to be infinite.
In this situation we will, in \S\ref{ss:back.isog.classes}, 
recover a finiteness statement by further imposing a bound on $[K(j(E'))=K]$ for $E'\in\Jscr_K(E,B)$.

%%%%%%%%%%%%%%%%%%%%%%%%%%%%%%%%%%%%%%%%%%%%%%%%%%%%%%%%%%%%%%%%%%%%%%%%%%%%%%%%%%%%%%%%%%%%%%%%%%%%%%%%%%%%%%%%%%%%%%%%%%%%%%%%%%%%%%%%%%%%%%%%%%%%%%%%
%%%%%%%%%%%%%%%%%%%%%%%%%%%%%%%%%%%%%%%%%%%%%%%%%%%%%%%%%%%%%%%%%%%%%%%%%%%%%%%%%%%%%%%%%%%%%%%%%%%%%%%%%%%%%%%%%%%%%%%%%%%%%%%%%%%%%%%%%%%%%%%%%%%%%%%%
\section{An isogeny estimate}\label{sec:isog.estimate} %%

The primary goal of this last section is to prove the second main result of the paper (Theorem \ref{itheo:min.isog}), 
which is the following isogeny estimate:

		\begin{theo}\label{theo.small.isogenies} 
		Let $K$ be a function field of genus $g$. 
		For any pair of  non-isotrivial isogenous elliptic curves  $E_1, E_2$ defined over $K$, 
		there exists an isogeny $\varphi_0:E_1\to E_2$ with
		\[ \deg\varphi_0
		\leq \cO{} \max\{1,  g\} \, \max\left\{\frac{\degins j(E_1)}{\degins j(E_2)}, \frac{\degins j(E_2)}{\degins j(E_1)}\right\},\]
		where $\degins j(E_1), \degins j(E_2) $ are the inseparability degrees of $j(E_1), j(E_2)$, respectively.
		\end{theo}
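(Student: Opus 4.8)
The plan is to use the structure theory of isogenies from Section~\ref{sec:isogenies} to reduce to the existence of a small \emph{biseparable cyclic} isogeny of degree prime to the characteristic and with kernel defined over $K$, then to observe that such an isogeny produces a non-constant $K$-morphism from a model of $K$ to the modular curve $X_0(N)$, and finally to conclude via the Riemann--Hurwitz formula and a lower bound for the genus of $X_0(N)$. For the reduction: $\Hom(E_1,E_2)=\Z\varphi_0$ is free of rank one (\S\ref{ss:isogenies.prelim}), and by Remark~\ref{rema:isog.min.sep} one of $\varphi_0,\widehat{\varphi_0}$ is separable; since the bound to be proved is symmetric in $E_1,E_2$ and $\deg\varphi_0=\deg\widehat{\varphi_0}$, we may assume $\degins\varphi_0=1$ (otherwise we run the argument on $\widehat{\varphi_0}\colon E_2\to E_1$). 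By Corollary~\ref{coro:dualdeg}, $p^{b}:=\degins\widehat{\varphi_0}=\degins j(E_1)/\degins j(E_2)\ge1$, so that $p^{b}=\max\{\degins j(E_1)/\degins j(E_2),\,\degins j(E_2)/\degins j(E_1)\}$. Decomposing $\widehat{\varphi_0}$ by Proposition~\ref{prop:decomposition.isogeny} — the Verschiebung factor being trivial since $\degins\widehat{\widehat{\varphi_0}}=\degins\varphi_0=1$ — gives $\widehat{\varphi_0}=\chi\circ\Frob{p^{b}}$ with $\chi\colon E_2^{(p^{b})}\to E_1$ biseparable of degree $M:=\deg\varphi_0/p^{b}$, a number prime to $p$. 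Since $\widehat{\varphi_0}$ is a generator so is $\chi$ (otherwise $\widehat{\varphi_0}$ would be divisible by $[m]$ for some $m>1$); being a separable generator, $\chi$ has cyclic kernel (otherwise $E_2^{(p^{b})}[m]\subseteq\ker\chi$ for some $m>1$, again making $\chi$ divisible by $[m]$). As $\deg\varphi_0=M\,p^{b}$, it now suffices to prove $M\le\cO\,\max\{1,g\}$. (If $K$ has characteristic $0$ all of this is vacuous: one takes $\chi=\varphi_0$, $M=\deg\varphi_0$, $p^{b}=1$.)

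Since $\Hom(E_2^{(p^{b})},E_1)=\Z\chi$ has rank one, $\Gal(\bar K/K)$ acts on it through $\{\pm1\}$, hence $\ker\chi$ is Galois-stable and therefore defined over $K$; it is a cyclic subgroup of order $M$ prime to $\mathrm{char}\,K$. Thus $(E_2^{(p^{b})},\ker\chi)$ determines a $K$-rational point of the coarse moduli curve $Y_0(M)$, which is a smooth affine geometrically irreducible curve over $\Q$ (in characteristic $0$), resp.\ over $\F_p$ (in characteristic $p$, where $p\nmid M$). Extending this point over a model $C$ of $K$, using that $X_0(M)$ is proper while $C$ is smooth, one obtains a morphism of smooth projective curves $f\colon C\to X_0(M)$. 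Composing $f$ with the degeneracy map $X_0(M)\to\P^1$ sending a pair to the $j$-invariant of its underlying elliptic curve recovers the morphism $j\bigl(E_2^{(p^{b})}\bigr)\colon C\to\P^1$, which is non-constant because $E_2^{(p^{b})}$ is non-isotrivial; hence $f$ is non-constant.

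Applying the Riemann--Hurwitz formula to the finite morphism $f$ (and recalling $g=g(C)$),
\[ 2g-2=2\,g(C)-2=(\deg f)\bigl(2\,g(X_0(M))-2\bigr)+\deg\mathrm{Ram}(f)\ \ge\ (\deg f)\bigl(2\,g(X_0(M))-2\bigr),\]
so $g\ge g(X_0(M))$ whenever $g(X_0(M))\ge1$. Combined with the standard lower bound $M\le\cO\,\max\{1,g(X_0(M))\}$ for the genus of $X_0(M)$ — valid for every $M\ge1$, and sharp at $M=49$ — this yields $M\le\cO\,\max\{1,g\}$; the finitely many levels with $g(X_0(M))=0$ are all $\le\cO$ and cause no trouble. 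Hence $\deg\varphi_0=M\,p^{b}\le\cO\,\max\{1,g\}\,\max\{\degins j(E_1)/\degins j(E_2),\,\degins j(E_2)/\degins j(E_1)\}$, and $\varphi_0$ itself (or $\widehat{\varphi_0}$, of the same degree, in the symmetric case) is the required $\bar K$-isogeny $E_1\to E_2$. One could equally run this last step with a lower bound for the $\bar k$-gonality of $X_0(M)$ in place of its genus.

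The main obstacle is the genus estimate in the last step: to pin down the clean constant $\cO=7^{2}$ (sharp at $N=49$) one must bound the cusp and elliptic-point terms in the genus formula for $X_0(N)$ against its main term $\psi(N)/12$, where $\psi(N)\ge N$. The reduction, although elementary, must also be handled with some care: one needs the isogeny ultimately fed into $Y_0(M)$ to be genuinely cyclic, of degree prime to the characteristic, and to have a kernel that is really $K$-rational, so that the coarse moduli interpretation applies without twist-theoretic complications.
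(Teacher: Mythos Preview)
Your proof is correct and follows essentially the same route as the paper: reduce to a biseparable isogeny with cyclic $G_K$-stable kernel of order prime to $p$, interpret this as a non-constant $K$-morphism $C\to X_0(M)$, and conclude via Riemann--Hurwitz together with the genus estimate $M\le 49\max\{1,g(X_0(M))\}$ (Lemma~\ref{lemm:genus.estimate}). The only organisational difference is that you work directly with a generator $\varphi_0$ of $\Hom(E_1,E_2)$ and peel off its biseparable factor $\chi$ via Proposition~\ref{prop:decomposition.isogeny}, whereas the paper first isolates the biseparable case as a standalone proposition (Proposition~\ref{prop:small.ds.isogeny}) and then reduces to it by composing with a Frobenius to equalise the inseparability degrees of the $j$-invariants (Lemma~\ref{lemm:reduction.4}); your shortcut of deducing cyclicity and $G_K$-stability of $\ker\chi$ from the fact that $\chi$ generates the rank-one module $\Hom(E_2^{(p^b)},E_1)$ is the same argument the paper gives inside the proof of Proposition~\ref{prop:small.ds.isogeny}, just packaged slightly more efficiently.
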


If $K$ has characteristic $0$, the inseparability degrees appearing on the right-hand side of the inequality should be interpreted as~$1$. 
Specifically, in that situation, the statement above yields  
that there is an effective constant $c_1>0$ (depending only on the genus of $K$) such that: 
for all pairs $E_1, E_2$ of non-isotrivial $\bar{K}$-isogenous elliptic curves, 
there exists a ($\bar{K}$-)isogeny $\varphi_0:E_1\to E_2$ with $\deg\varphi_0 \leq c_1$.
This isogeny estimate is thus \emph{uniform} for a fixed $K$. 
This should be compared to the number field case (treated in \cite{MasserW1, Pellarin, GaudronRemond}) 
where the right-hand side of such isogeny estimates depends on the heights of the involved elliptic curves.

Let us  remark that, in positive characteristic $p$, the appearance of the inseparability degrees on the right-hand side is unavoidable,
because of Theorem \ref{theo:isog.hmod} and  the existence of the Frobenius isogeny. 
Given a non-isotrivial elliptic curve  $E/K$, for any integer $n\geq 1$, the smallest isogeny between $E$ and its Frobenius twist $E^{(p^n)}$ is 
indeed the $p^n$-th power Frobenius, which has degree $p^n$.
Similar considerations with the Verschiebung show (with Corollary \ref{coro:dualdeg})
that the dependency in $\degins j(E_1)$ and $\degins j(E_2)$ on the right-hand side of the bound in Theorem \ref{theo.small.isogenies} is optimal. 

On the other hand, the dependency of Theorem \ref{theo.small.isogenies} in the genus of $K$ can sometimes be improved. 
For instance, if $K$ has genus $0$ one can prove (see Remark \ref{rema:genusbound}) that
there exists an isogeny $\varphi_0:E_1\to E_2$ with
\begin{equation}
\deg\varphi_0 \leq 25  \,  \max\left\{\frac{\degins j(E_1)}{\degins j(E_2)}, \frac{\degins j(E_2)}{\degins j(E_1)}\right\}
\end{equation}
between two isogenous non-isotrivial elliptic curves $E_1, E_2$ defined over $K$.
  
\begin{rema}\label{rema:field.of.def} 
If one assumes that $E_1, E_2$ are linked by an isogeny   which is defined over $K$, then
the isogeny $\varphi_0:E_1\to E_2$ whose existence is asserted in Theorem \ref{theo.small.isogenies} is also defined over $K$. 
More generally, all isogenies $E_1\to E_2$ are then defined over $K$. 
See Lemma \ref{lemm:field.of.def} below. 
\end{rema}

\subsection{Preliminaries about modular curves}\label{ss:mod.curves} %%

Let us briefly recall some facts about modular curves. 
Given an integer $N\geq 1$, there is a smooth scheme $\Ycal_0(N)$ of relative dimension $1$ over $\Spec \Z[1/N]$ 
which is a coarse moduli scheme for elliptic curves endowed with a cyclic subgroup of order~$N$. 
In other words,  the curve $\Ycal_0(N)$ enjoys the following property:
For any field $F$ whose characteristic is coprime to $N$, there is a bijection (which is functorial in $F$) between 
the set $\Ycal_0(N)(F)$ and the set of equivalence classes of pairs $(E, H)$ 
where $E$ is an elliptic curve over $F$, 
and $H$ is a cyclic subgroup of $E$ of order $N$ which is stable under the action of the absolute Galois group of $F$. 
Two such pairs are called equivalent if they are isomorphic over the algebraic closure $\bar{F}$ of $F$.
These properties of $\Ycal_0(N)$ are explained in more details in \cite[\S8]{DiamondIm}, 
and the construction is carried out comprehensively in \cite{KaMa} (in particular, Chapters III,  VI and VIII there).
Adding a finite number of points (called cusps) to $\Ycal_0(N)$, one obtains the usual compactification, 
denoted by $\Xcal_0(N)$,  of $\Ycal_0(N)$. 
The smooth projective curve $\Xcal_0(N)$ is also defined over $\Spec\Z[1/N]$ 
(and, further, has an interpretation as a moduli space, in terms of generalized elliptic curves, see \cite[\S9]{DiamondIm}).

In particular, the curve $\Xcal_0(1)$ is nothing but the ``$j$-line'' over $\Spec\Z$ 
\ie{}, $\Xcal_0(1) \simeq \P^1_{/\Z}$, 
where the isomorphism is given on isomorphism classes of elliptic curves by $E\mapsto j(E)$.
 
For any divisor $n$ of $N$, there is a ``degeneration'' morphism $\Xcal_0(N) \to \Xcal_0(n)$, 
which extends the map on pairs $(E,H)$ as above, defined by $(E,H)\mapsto (E, \frac{N}{n} H)$. 
In the special case $n=1$, we obtain a map $f_N:\Xcal_0(N) \to \Xcal_0(1) = \P^1$, 
which extends the map $(E, H)\mapsto E \mapsto j(E)$.
\\
 
Let $\Gamma_0(N)\subset \SL_2(\Z)$ be the subgroup formed by $2\times 2$ 
integral matrices of determinant $1$ whose reduction modulo~$N$ is upper triangular. 
The fiber $X_0(N) := \Xcal_0(N)\times\C$ is isomorphic (as a Riemann surface) 
to the compactification of the quotient $\{\tau\in\C:\mathrm{Im}\,\tau>0\}/\Gamma_0(N)$.
One can show (see \cite[Chapter I]{Shimura} for instance) that 
the degree of the degeneration morphism $f_N:X_0(N) \to X_0(1) = \P^1_{/\C}$ is equal to the index~${[\mathrm{SL}_2(\Z):\Gamma_0(N)]}$.
By Propositions 1.40 and 1.43 in \cite{Shimura}, we have
\begin{equation}\label{eq:degree.degeneration}
\deg f_N = {[\mathrm{SL}_2(\Z):\Gamma_0(N)]}   = N \, \prod_{\ell\mid N} \left(1+\frac{1}{\ell}\right):= \psi(N).
\end{equation}
A detailed study of the ramification behaviour of $f_N$, 
combined with the Riemann--Hurwitz formula then allows to compute the genus of $X_0(N)$.
Specifically, Proposition 1.40 and Proposition 1.43 in  \cite{Shimura} yield that 
the modular curve $X_0(N)$ has genus
\begin{equation}\label{eq:genus.modular}
g({X_0(N)}) = 1+\frac{\psi(N)}{12} - \frac{\nu_2(N)}{4}- \frac{\nu_3(N)}{3} - \frac{\nu_\infty(N)}{2},
\end{equation}
where $\psi(N)$ is as defined in \eqref{eq:degree.degeneration},  
$\nu_\infty(N) = \sum_{d\mid N} \varphi(\gcd(d, N/d))$,  and
\[\nu_2(N) = \begin{cases} 0 & \text{ if } 4\mid N, \\
 \prod_{\ell\mid N}\left(1+\left(\frac{-1}{\ell}\right)\right)	& \text{ if } 4\nmid N,
 \end{cases}\qquad \text{ and }\quad 
 \nu_3(N) = \begin{cases} 0 & \text{ if } 9\mid N, \\
 \prod_{\ell\mid N}\left(1+\left(\frac{-3}{\ell}\right)\right)	& \text{ if } 9\nmid N.
 \end{cases}\]
Here $\varphi$ denotes Euler's totient function, and $(\cdot/\ell)$ is the Legendre symbol.
\\

We end this brief summary by providing explicit bounds on the genus $g(X_0(N))$ of $X_0(N)$ as $N$ grows. 
We thank Ga\"el R\'emond for the following argument, which yields  better bounds than our original proof.

		\begin{lemm}\label{lemm:genus.estimate}
		For any integer  $N\geq 300$, we have $ g({X_0(N)}) \geq {N}/{49}$. 
		Moreover, for all $N\geq 1$, we have \[ N \leq \cO{}\max\{1, g({X_0(N)})\}.\]
		\end{lemm}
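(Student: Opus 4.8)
The plan is to prove the lower bound $g(X_0(N)) \geq N/49$ for $N \geq 300$ directly from the genus formula \eqref{eq:genus.modular}, and then to deduce the displayed inequality $N \leq \cO\max\{1, g(X_0(N))\}$ by combining this asymptotic bound with a finite check for the remaining small values of $N$.

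First I would estimate each of the correction terms in \eqref{eq:genus.modular}. Writing $g(X_0(N)) = 1 + \psi(N)/12 - \nu_2(N)/4 - \nu_3(N)/3 - \nu_\infty(N)/2$, the key is that $\psi(N) \geq N$ always, while $\nu_2(N), \nu_3(N)$ and $\nu_\infty(N)$ are all small compared to $N$. Indeed $\nu_2(N) = \prod_{\ell \mid N}(1 + (\tfrac{-1}{\ell})) \leq 2^{\omega(N)}$ and similarly $\nu_3(N) \leq 2^{\omega(N)}$, where $\omega(N)$ is the number of distinct prime factors of $N$; and $\nu_\infty(N) = \sum_{d \mid N} \varphi(\gcd(d, N/d)) \leq \sum_{d \mid N} \sqrt{d} \leq d(N)\sqrt{N}$ (one can in fact do better, e.g. $\nu_\infty(N) \leq \tfrac{4}{3}\sqrt{N}\,d(N)$ or a cleaner bound, but a crude one suffices). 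Since $2^{\omega(N)}$ and $d(N)\sqrt{N}$ are both $o(N)$ — more precisely $2^{\omega(N)} = N^{o(1)}$ and $d(N) = N^{o(1)}$ — there is an explicit threshold beyond which $\psi(N)/12$ dominates the sum of the correction terms by the required margin. R\'emond's argument presumably makes these bounds fully explicit: one needs $\tfrac{\psi(N)}{12} - \tfrac{\nu_2}{4} - \tfrac{\nu_3}{3} - \tfrac{\nu_\infty}{2} + 1 \geq \tfrac{N}{49}$, i.e., using $\psi(N) \geq N$, it suffices that $\tfrac{N}{12} - \tfrac{N}{49} \geq \tfrac{\nu_2}{4} + \tfrac{\nu_3}{3} + \tfrac{\nu_\infty}{2} - 1$, and the left side is $\tfrac{37N}{588}$, comfortably larger than the right side once $N \geq 300$ given the sublinear bounds above. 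I would verify the constant $49$ is chosen precisely so that this works out.

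For the second assertion, I would split into cases. If $N \leq \cO = 49$, then trivially $N \leq 49 = \cO \cdot 1 \leq \cO\max\{1, g(X_0(N))\}$. For $49 < N < 300$ (finitely many values), one checks by the explicit genus formula \eqref{eq:genus.modular} — or by the known table of genera of $X_0(N)$ — that $g(X_0(N)) \geq 1$ and in fact $N \leq 49\, g(X_0(N))$; the first value of $N$ with positive genus is $N = 22$ (genus $2$), so for all $N \geq 50$ one has $g(X_0(N)) \geq 1$, and a short verification confirms $N/g(X_0(N)) \leq 49$ throughout this range, the worst cases being the few $N$ with $g(X_0(N)) = 1$ (namely $N \in \{11, 14, 15, 17, 19, 20, 21, 24, 27, 32, 36, 49\}$, all of which are $\leq 49$). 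Finally, for $N \geq 300$, the bound $g(X_0(N)) \geq N/49$ from the first part gives $N \leq 49\, g(X_0(N)) \leq \cO\max\{1, g(X_0(N))\}$ immediately.

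The main obstacle is obtaining sufficiently sharp \emph{explicit} upper bounds on the elliptic-point counts $\nu_2, \nu_3$ and especially the cusp count $\nu_\infty$ — a naive bound like $\nu_\infty(N) \leq d(N)\sqrt{N}$ together with an explicit bound on $d(N)$ (e.g. $d(N) \leq N^{1.066/\log\log N}$ or simpler estimates $d(N) \ll_\epsilon N^\epsilon$) must be traded off against the threshold $N \geq 300$ so that everything closes with the clean constant $49$. This is purely a matter of careful bookkeeping with elementary number-theoretic inequalities; no conceptual difficulty arises, but one must be slightly careful that the finite range $49 < N < 300$ is genuinely covered by a finite (and in practice very short) computation rather than an inequality argument.
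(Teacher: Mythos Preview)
Your overall plan --- bound the correction terms in the genus formula, establish the asymptotic inequality for large $N$, then cover the remaining range by a finite check --- is exactly the paper's strategy, and your treatment of the second assertion is essentially right (the worst case is indeed $N=49$ with $g(X_0(49))=1$; note though that the first $N$ with positive genus is $N=11$, not $22$).

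The gap is in your control of $\nu_\infty(N)$. The bound you propose, $\nu_\infty(N)\leq d(N)\sqrt{N}$, is true but far too weak to close at the threshold $N\geq 300$. For instance, at $N=360$ one has $d(360)=24$ and $d(N)\sqrt{N}\approx 455$, so your right-hand side $\tfrac{\nu_2}{4}+\tfrac{\nu_3}{3}+\tfrac{\nu_\infty}{2}-1$ could be as large as $\approx 228$, while the left-hand side $\tfrac{37N}{588}$ is only $\approx 22.6$. The same failure occurs for many highly composite $N$ in every range, so this is not fixable by pushing the threshold up a little and enlarging the finite check; with $d(N)\sqrt{N}$ you would need $\sqrt{N}\gtrsim 8\,d(N)$, which fails infinitely often. (Incidentally, your intermediate step $\varphi(\gcd(d,N/d))\leq\sqrt{d}$ is false --- take $N=p^2$, $d=p$ --- though the final bound $\nu_\infty(N)\leq d(N)\sqrt{N}$ still holds via $\gcd(d,N/d)\leq\sqrt{N}$.)

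The paper's key improvement is to exploit the multiplicativity of $\nu_\infty$: one checks on prime powers that $\nu_\infty(\ell^e)\leq \sqrt{\ell^e}(1+1/\ell)$, whence $\nu_\infty(N)\leq \sqrt{N}\prod_{\ell\mid N}(1+1/\ell)=\psi(N)/\sqrt{N}$. Likewise $\nu_2(\ell^e),\nu_3(\ell^e)\leq\nu_\infty(\ell^e)$, so $\nu_2,\nu_3\leq\nu_\infty$. These bounds put everything in terms of $\psi(N)$, giving
\[
g(X_0(N))\ \geq\ 1+\frac{\psi(N)}{12}-\frac{13}{12}\,\nu_\infty(N)\ \geq\ 1+\frac{\psi(N)}{12}\Big(1-\frac{13}{\sqrt{N}}\Big),
\]
and now, using $\psi(N)\geq N$, one sees $g(X_0(N))>N/49$ as soon as $N\geq 297$. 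The finite check then only needs to cover $1\leq N\leq 296$. So the missing idea is not bookkeeping but this sharper multiplicative estimate for the cusp count.
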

		\begin{proof} 
		We  estimate separately the  terms on the right-hand side of equation \eqref{eq:genus.modular}.
		We start by noticing that $\nu_\infty, \nu_2$ and $\nu_3$ are all multiplicative functions.
		For any prime $\ell$, and any integer $e\geq1$, one has 
		\[\nu_\infty(\ell^e) 
		= \begin{cases}
		\sqrt{\ell ^e} \left(1+\ell^{-1}\right) & \text{ if } 2\mid e, \\
		\sqrt{\ell ^e} \, 2 \ell^{-1/2} & \text{ if } 2\nmid e.
		\end{cases}\]
		Since, for all $\ell$, we have $1+\ell^{-1}\geq {2} {\ell^{-1/2}}$, the multiplicativity of $\nu_\infty$ implies that, for all $N\geq 1$,
		\[ \frac{\nu_\infty(N)}{\sqrt{N}}
		\leq \prod_{\ell \mid N}\left(1+\frac{1}{\ell}\right) 
		= \frac{\psi(N)}{N}.\]
		Hence  we deduce that $\nu_\infty(N)\leq {\psi(N)}/{\sqrt{N}}$.

		Similarly, we notice that  $\max\{\nu_2(\ell^e), \nu_3(\ell^e)\}\leq\nu_\infty(\ell^e)$ for all primes $\ell$ and integers $e\geq 1$. 
		We directly infer that, for any integer $N\geq1$, one has $\max\{\nu_2(N), \nu_3(N)\}\leq\nu_\infty(N)$.
		 
		Plugging these bounds into equation \eqref{eq:genus.modular} yields that
		\begin{align*}
		g({X_0(N)})
		&\geq1+\frac{\psi(N)}{12}-\frac{7}{12}\max\{\nu_2(N), \nu_3(N)\}-\frac{1}{2}\nu_\infty(N)\\
		&\geq1+\frac{\psi(N)}{12}-\frac{13}{12}\nu_\infty(N)
		\geq 1+\frac{\psi(N)}{12}\left(1-\frac{13}{\sqrt{N}}\right).
		\end{align*}
		By definition of $\psi$, we have $\psi(N)\geq N$.
		Therefore, $g({X_0(N)}) \geq 1 +  {(N-13\sqrt{N})}/{12}$ for $N\geq 13^2$.
		Furthermore, as a quick computation shows,  for any $N\geq 297$, we have
		\[ g({X_0(N)}) 
		> \frac{N}{12}\left(1-\frac{13}{\sqrt{N}}\right)
		\geq \frac{N}{49}.\]
		This proves the first assertion of the lemma.
		Direct calculations with formula \eqref{eq:genus.modular}  using a computer show that the bound 
		$N\leq \cO{} \max \{1, g({X_0(N)})\}$ also holds for all $N\in\{1, \dots, 296\}$. 
		(The worst case is $N=49$ for which $g({X_0(N)})=1$.)
		\end{proof}

\begin{rema}\label{rema:genusbound}
Depending on the situation at hand, the bounds of Lemma \ref{lemm:genus.estimate} can be somewhat optimized.
For instance, one can deduce from $12 g({X_0(N)}) > \sqrt{N}(\sqrt{N}-{13})$ 
that $N\leq 13 g(X_0(N))$ for all $N\geq 28561$, and check with the help of a computer that, 
for all $N\geq 1$, 
\begin{equation}
N \leq \max\{3721, 13  g(X_0(N))\} 
\leq 3721 \max\left\{1 , 0.004 \,  g(X_0(N)) \right\}.
\end{equation}
The worst case happens for $N=3721$ when $g(X_0(N))=284$.

Starting from the inequality $12 (g({X_0(N)}) -1)\geq \sqrt{N}(\sqrt{N}-{13})$ proved above, 
one also deduces that $(\sqrt{N}-13/2)^2\leq (\sqrt{12g(X_0(N))} +11/2)^2$,
which leads to $N \leq 12 g(X_0(N)) + 2b\sqrt{12g(X_0(N))}+b^2$
with $b= 13/2 + 11/2= 12$. 
We obtain that 
\begin{equation}
 N \leq 12 g(X_0(N)) + 24\sqrt{12g(X_0(N))} + 144.
 \end{equation}
The last two displayed inequalities are more precise than Lemma \ref{lemm:genus.estimate} for larger genera.

For small genera, on the other hand, explicit computations using formula \eqref{eq:genus.modular} show that 
the modular curve $X_0(N)$ has genus zero if and only if 
$N\in\{1, 2, 3, 4, 5, 6, 7, 8, 9, 10, 12, 13, 16, 18, 25\}$. 
\end{rema}

\subsection{Bounding biseparable isogenies} %%
 
Let $K$ be a function field with field of constants $k$, and fix a model $C/k$ of $K$ (see \S\ref{sec:intro.FF}). 
We write $g(C)$ for the genus of $C$ (which we also call the genus of $K$), and we let $G_K$ denote the absolute Galois group of $K$. 

Let $d\geq 1$ be an integer which is coprime to the characteristic of $K$. 
Assume that we are given a non-isotrivial elliptic curve $E$ over $K$, 
equipped with a cyclic, $G_K$-stable subgroup $H\subset E$ of order $d$.
Consider the modular curve $\Xcal_0(d)$, defined over $\Spec\Z[1/d]$, and 
write $X_0(d)_{/F}$ for its base change to a field $F$ whose characteristic is prime to $d$.
By the coarse moduli space interpretation of $\Ycal_0(d)$, 
there is a canonical map sending the $K$-isomorphism class of the pair $(E, H)$ to 
a $K$-rational point $P$ on $\Ycal_0(d)$.
From the given data $(E,H)$, we thus deduce a non-cuspidal $K$-rational point $P\in \Xcal_0(d)$. 

By definition, the point $P$ is given by a morphism $\Spec K \to \Xcal_0(d)$ over $\Spec\Z[1/d]$, 
which factors as a morphism $\Spec K \to X_0(d)_{/k}$ over $\Spec k$.
Since $\Spec K$ is the generic point of $C$, 
the latter induces a rational map $C\dashrightarrow X_0(d)_{/k}$. 
Since both $C$ and $X_0(d)_{/k}$ are smooth projective curves over $k$, 
this rational map extends to a morphism  $s_P : C\to X_0(d)_{/k}$ over $k$. 

Let $f_d:X_0(d)_{/k}\to X_0(1)_{/k} = \P^1_{/k}$ denote the degeneration morphism, 
which extends the map sending a pair $(E', H')$ to $j(E')$.
By construction, the morphism $j_E: C\to\P^1_{/k}$ deduced from the $j$-invariant of $E$ factors 
through $s_P$ and $f_d$. In other words, the diagram
\begin{equation}\label{eq:mod.curve.diag}
\begin{tikzcd}
 C 
 \ar[dr, "j_E"']
 \ar[rr,  "s_P"] && X_0(d)_{/k} 
 \ar[ld , "f_d"]\\
 & \P^1_{/k} &
\end{tikzcd}
\end{equation}
is commutative. We may now prove the following:
 
		\begin{prop}\label{prop:ff.bound}
		Let $E$ be a non-isotrivial elliptic curve over $K$.
		Let $H\subset E$ be a cyclic, $G_K$-stable subgroup of order $d$.
		If $d$ is coprime to the characteristic of $K$, we have
		\[ d \leq \cO{} \max\{1, g(C)\}.\]
		\end{prop}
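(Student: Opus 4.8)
The plan is to exploit the commutative diagram \eqref{eq:mod.curve.diag} and compare genera via Riemann--Hurwitz. The key point is that the morphism $s_P:C\to X_0(d)_{/k}$ is non-constant: indeed, its composition with the degeneration map $f_d$ is $j_E$, which is non-constant because $E$ is non-isotrivial. A non-constant morphism of smooth projective curves over $k$ cannot decrease the genus of the source — more precisely, by the Riemann--Hurwitz formula (or simply by the fact that $s_P^\ast$ induces an injection on spaces of global differentials, hence $g(C)\geq g(X_0(d)_{/k})$), we obtain
\[ g(C) \geq g\big(X_0(d)_{/k}\big).\]
Since $d$ is coprime to the characteristic, the curve $X_0(d)_{/k}$ has the same genus as its characteristic-zero counterpart $X_0(d)$, namely the genus given by formula \eqref{eq:genus.modular}; this is standard for modular curves away from the bad primes. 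Hence $g(C)\geq g(X_0(d))$.

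**Concluding with the genus estimate.** Once we know $g(C)\geq g(X_0(d))$, we invoke the second assertion of Lemma~\ref{lemm:genus.estimate}, which states that $d \leq \cO{}\max\{1, g(X_0(d))\}$ for every $d\geq 1$. Combining these two inequalities immediately yields
\[ d \leq \cO{}\max\{1, g(X_0(d))\} \leq \cO{}\max\{1, g(C)\},\]
which is exactly the claimed bound (recall $g(C)$ is the genus of $K$). That finishes the argument.

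**Where the care is needed.** The only genuinely delicate point is the inequality $g(C)\geq g(X_0(d)_{/k})$ together with the identification of $g(X_0(d)_{/k})$ with the classical value. For the first half, one must make sure $s_P$ is non-constant — but this is forced by the diagram, since $f_d\circ s_P = j_E$ is non-constant, so $s_P$ cannot be constant. For the non-constancy-implies-genus-inequality step in positive characteristic, one should be slightly careful: a non-constant morphism may be inseparable, but even then Riemann--Hurwitz (in its general form, with the contribution from the different being effective, or after factoring out a power of Frobenius which does not change the genus) gives $g(C)\geq g(X_0(d)_{/k})$; alternatively one can argue via the pullback of regular differentials. For the identification $g(X_0(d)_{/k}) = g(X_0(d)_{/\C})$, this is the well-known fact that $\Xcal_0(d)$ is smooth and proper over $\Spec\Z[1/d]$ with geometrically connected fibres, so all its geometric fibres over points of good characteristic have the same genus; this is the reason the coprimality hypothesis on $d$ is essential. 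Everything else is a direct citation of Lemma~\ref{lemm:genus.estimate}.
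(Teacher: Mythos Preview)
Your proof is correct and follows essentially the same route as the paper: you use the commutative diagram to see that $s_P$ is non-constant (since $j_E$ is), invoke Riemann--Hurwitz to obtain $g(C)\geq g(X_0(d)_{/k})$, identify this genus with $g(X_0(d)_{/\C})$ via smoothness of $\Xcal_0(d)$ over $\Spec\Z[1/d]$, and finish with Lemma~\ref{lemm:genus.estimate}. The additional remarks you make about handling possible inseparability of $s_P$ are a welcome bit of extra care, but the overall argument and its ingredients coincide with the paper's proof.
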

		\begin{proof} 
		By the discussion above, the data $(E,H)$ of the proposition yields 
		a morphism $s_P : C\to X_0(d)_{/k}$ such that the diagram \eqref{eq:mod.curve.diag} commutes.
		The fact that $E$ is non-isotrivial implies that $j_E:C\to \P^1_{/k}$ is not constant which, in particular, 
		ensures that the morphism $s_P:C\to X_0(d)_{/k}$ is non-constant.

		A weak version of the Riemann--Hurwitz formula (see \cite[Theorem 7.16]{Rosen}) then entails that 
		the genus of~$X_0(d)_{/k}$ is no greater than that of $C$.
		Since $\Xcal_0(d)$ is a smooth curve over $\Spec\Z[1/d]$, 
		the genus of $X_0(d)_{/k}$ is equal to the genus of the complex Riemann surface $X_0(d)_{/\C}$.  
		Hence $g(C)\geq g({X_0(d)_{/\C}})$. 
		We then make use of the lower bound of Lemma \ref{lemm:genus.estimate}, 
		and conclude that  $d\leq \cO{} \max\{1, g({C})\}$. 
		\end{proof}

We now state and prove a variant of the previous proposition, 
which is significantly weaker (it is not uniform in the elliptic curve $E$) but which might prove more flexible.
This version may indeed be of interest for applications where one studies elliptic curves defined 
over a varying field $L$ whose degree over~$K$ is bounded 
(in which case a bound depending on the genus of $L$ may be too crude).
We provide an instance of such an application in  \S\ref{ss:back.isog.classes}.

		\begin{prop}\label{prop:ff.bound.V2}
		Let $K$ be a function field and let $L/K$ be a finite extension. 
		Let $E$ be a non-isotrivial elliptic curve over $L$,  
		and $H\subset E$ be a cyclic, $G_{L}$-stable subgroup of order $d$.
		Assuming that $d$ is coprime to the characteristic of $K$, we have
		\[d\leq   [L:K] \, \hmod(E).\]
		\end{prop}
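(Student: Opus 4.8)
The idea is to mimic the proof of Proposition~\ref{prop:ff.bound} but to work over the model of $L$ rather than that of $K$, and then convert the resulting statement about the genus of $X_0(d)_{/k'}$ into a statement about $\hmod(E)$ via the degeneration morphism $f_d$. Concretely, let $C'/k'$ be a model of $L$, so that $[L:K] = [k':k]\cdot\deg(C'\to C)$ and $g(C')$ is the genus of $L$. The pair $(E, H)$, with $H$ cyclic of order $d$ and $G_L$-stable, gives (by the coarse moduli interpretation of $\Ycal_0(d)$, valid since $d$ is prime to the characteristic) an $L$-rational non-cuspidal point of $\Xcal_0(d)$, hence a non-constant morphism $s_P : C' \to X_0(d)_{/k'}$ fitting into the commutative triangle $j_E = f_d \circ s_P$, exactly as in \eqref{eq:mod.curve.diag} but with $C$ replaced by $C'$. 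Non-constancy of $s_P$ again follows from non-isotriviality of $E$.

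The new ingredient is that instead of invoking the weak Riemann--Hurwitz inequality $g(X_0(d)) \le g(C')$ and then Lemma~\ref{lemm:genus.estimate}, I would directly compare the degree of $j_E$ (as a morphism $C'\to\P^1$) with the degree of $f_d$. Since $j_E = f_d\circ s_P$, multiplicativity of degrees of morphisms of curves gives $\deg j_E = \deg f_d \cdot \deg s_P \ge \deg f_d = \psi(d) \ge d$, the last two steps using \eqref{eq:degree.degeneration} and the elementary bound $\psi(d)\ge d$. On the other hand, by the discussion of the Weil height in \S\ref{ssec:hweil}, the degree of $j_E$ viewed as a morphism $C'\to\P^1$ is exactly $[L:K]\cdot h(j(E)) = [L:K]\,\hmod(E)$, because $h_{L}([j(E):1]) = [L:K]\,h(j(E))$ equals that degree. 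Chaining these gives $d \le \psi(d) = \deg f_d \le \deg j_E = [L:K]\,\hmod(E)$, which is the claim.

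**The main obstacle.**
The only genuinely delicate point is the bookkeeping with the moduli interpretation and the passage from an $L$-rational point of $\Xcal_0(d)$ to a morphism of the chosen model $C'$: one must check that $s_P$ is defined over $k'$ (it factors through $\Spec L \to X_0(d)_{/k'}$ over $\Spec k'$, then extends to $C'$ by smooth properness of both curves), and that the resulting triangle with $f_d$ really does commute — i.e. that $f_d\circ s_P$ is the morphism attached to $j(E)$ and not to some twist. This is exactly parallel to the argument preceding Proposition~\ref{prop:ff.bound} and needs no new idea; it is just a matter of repeating it verbatim with $L$, $k'$, $C'$ in place of $K$, $k$, $C$. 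A secondary, entirely routine point is the identification $\deg(j_E : C'\to\P^1) = [L:K]\,\hmod(E)$, which is immediate from \S\ref{ssec:hweil} once one recalls $h_L = [L:K]\, h$ and that $h_L([f:1])$ is the degree of the corresponding morphism. No hard input beyond what is already in the excerpt is required; in fact this proof is shorter than that of Proposition~\ref{prop:ff.bound} since it sidesteps Lemma~\ref{lemm:genus.estimate} altogether.
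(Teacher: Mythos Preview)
Your proposal is correct and follows essentially the same approach as the paper's own proof: both construct the morphism $s_P:C'\to X_0(d)_{/k'}$ from the pair $(E,H)$, use the factorisation $j_E=f_d\circ s_P$ to get $\deg j_E\geq\deg f_d=\psi(d)\geq d$, and then identify $\deg j_E$ with $[L:K]\,\hmod(E)$ via \S\ref{ssec:hweil}. The aside $[L:K]=[k':k]\cdot\deg(C'\to C)$ is not needed and can be dropped.
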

		\begin{proof} 
		We fix a model $C'$ of $L$, and denote the constant field of $L$ by $k'$.
		Carrying out the argument preceding the previous proposition  with $L$ instead of $K$, 
		we deduce from the input $(E,H)$ an  $L$-rational point  $Q$ on the modular curve $\Ycal_0(d)$, 
		and a morphism $s_Q : C'\to X_0(d)_{/k'}$. 
		As in \eqref{eq:mod.curve.diag}, the $j$-invariant $j_E: C'\to\P^1_{/k'}$ factors through $s_Q$; 
		we thus have $j_E = f_d\circ s_Q$, where $f_d:X_0(d)_{/k'} \to \P^1_{/k'}$ is the degeneration map. 
		Since $j_E = f_d\circ s_Q$, it is clear that $\deg(j_E) \geq \deg(f_d)$.

		As was recalled in the previous subsection (see \eqref{eq:degree.degeneration}),  
		the degree of $f_d$ equals $[\mathrm{SL}_2(\Z):\Gamma_0(d)]=\psi(d)$.
		It is clear that $\psi(d)\geq d$, so that 
		\[\deg(j_E)\geq \deg(f_d) = {[\mathrm{SL}_2(\Z):\Gamma_0(d)]} = {\psi(d)} \geq {d}. \]
		Now, the degree of $j_E:C'\to\P^1_{/k'}$ is the degree of its divisor of poles $\div_\infty(j_E)\in\mathrm{Div}(C')$.
		As was remarked in \S\ref{ssec:hmod}, the modular height of $E$ satisfies $\hmod(E) = [L:K]^{-1} \, \deg(\div_\infty(j_E))$.
		Rearranging the terms in the inequality above then yields that
		$d \leq  [L:K] \, \hmod(E)$, 
		which concludes the proof.
		\end{proof}

\subsection{Proof of Theorem \ref{theo.small.isogenies}} %%

We first prove a special case of Theorem \ref{theo.small.isogenies}, 
using the tools introduced in the previous two subsections:

		\begin{prop}\label{prop:small.ds.isogeny} 
		Let $E_1, E_2$ be two non-isotrivial elliptic curves over $K$.
		Assume that there exists a biseparable isogeny $\varphi : E_1 \to E_2$. 
		Then there exists a biseparable isogeny $\varphi_0 : E_1 \to E_2$ with
		\[\deg\varphi_0 \leq \cO{} \max\{1, g(K)\}. \]
		\end{prop}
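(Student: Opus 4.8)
The plan is to take for $\varphi_0$ the generator of the $\Z$-module $\Hom(E_1,E_2)$ — which is free of rank one because $E_1,E_2$ are non-isotrivial and isogenous, hence have no CM (see \S\ref{ss:isogenies.prelim}) — and to show that this $\varphi_0$ is biseparable, that its kernel $H$ is a \emph{cyclic} $G_K$-stable subgroup of $E_1$ of order $d:=\deg\varphi_0$, and that $d$ is coprime to the characteristic. Once this is done, the announced bound follows immediately by applying Proposition \ref{prop:ff.bound} to the pair $(E_1,H)$, since $g(C)=g(K)$ for any model $C$ of $K$. Note that $\varphi_0$, being a generator, has minimal degree among all non-zero elements of $\Hom(E_1,E_2)$; this minimality will be used twice.

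First I would handle the positive-characteristic bookkeeping. Let $\varphi:E_1\to E_2$ be the biseparable isogeny furnished by the hypothesis; then $\varphi\in\Hom(E_1,E_2)=\Z\varphi_0$, so $\varphi=[m]\circ\varphi_0$ for some non-zero integer $m$ and $\deg\varphi=m^2 d$. By Lemma \ref{lemm:isog.ds.degree} the degree of $\varphi$ is prime to $p$, hence so is $d$, and a second application of Lemma \ref{lemm:isog.ds.degree} shows that $\varphi_0$ is biseparable. In particular $\varphi_0$ is separable, so $H:=(\ker\varphi_0)(\bar K)$ is a finite subgroup of $E_1(\bar K)$ of order $d$. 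To see $H$ is cyclic I would argue by contradiction: if $H\cong\Z/d_1\Z\times\Z/d_2\Z$ with $1<d_1\mid d_2$, choose a prime $\ell\mid d_1$ (necessarily $\ell\neq p$ since $d$ is prime to $p$); then $E_1[\ell]\subseteq H=\ker\varphi_0$, and $[\ell]:E_1\to E_1$ is separable, so Proposition \ref{prop:decomp.isog.factor} provides a factorisation $\varphi_0=\psi\circ[\ell]$ with $\psi:E_1\to E_2$ a non-zero isogeny of degree $d/\ell^2<d$, contradicting the minimality of $\deg\varphi_0$.

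It then remains to verify $G_K$-stability of $H$ and to conclude. For $\sigma\in G_K$, since $E_1$ and $E_2$ are defined over $K$ the conjugate $\varphi_0^{\sigma}$ is again an isogeny $E_1\to E_2$, so $\varphi_0^{\sigma}\in\Z\varphi_0$; as $\deg\varphi_0^{\sigma}=\deg\varphi_0$ and $\deg([n]\circ\varphi_0)=n^2 d$, this forces $\varphi_0^{\sigma}=\pm\varphi_0$, whence $\sigma(H)=\ker\varphi_0^{\sigma}=\ker\varphi_0=H$. Thus $H\subset E_1$ is a cyclic, $G_K$-stable subgroup of order $d$, with $d$ prime to the characteristic, and Proposition \ref{prop:ff.bound} yields $d\leq\cO{}\max\{1,g(K)\}$; since $\varphi_0$ is biseparable, it is the desired isogeny. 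The only genuinely substantive points are the two structural claims about the kernel: cyclicity — which is exactly what lets one work with $X_0(d)$ rather than a larger modular curve, and which the minimality of $\deg\varphi_0$ takes care of — and $G_K$-invariance; the minor caveat that conjugating $\varphi_0$ by $\sigma$ is legitimate even when $K$ is imperfect is harmless, since $\varphi_0$ is separable and so $H\subset E_1(K\sep)$ by Lemma \ref{lemm:kernel.sep}. Everything else is a direct appeal to the already-established Proposition \ref{prop:ff.bound}.
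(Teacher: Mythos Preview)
Your proof is correct and follows essentially the same approach as the paper: pick a minimal-degree isogeny $\varphi_0$, show its kernel is cyclic and $G_K$-stable of order prime to $p$, then invoke Proposition~\ref{prop:ff.bound}. The only cosmetic differences are that you start from a generator of $\Hom(E_1,E_2)$ (and then verify biseparability) whereas the paper starts from the minimal degree among biseparable isogenies---these yield the same $\varphi_0$---and your $G_K$-stability argument is slightly slicker, using $\varphi_0^{\sigma}\in\Z\varphi_0$ directly rather than passing through $\widehat{\varphi_0}\circ{}^{\sigma}\varphi_0\in\mathrm{End}(E_1)$.
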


If $K$ has characteristic $0$, the proof of this statement will conclude that of Theorem~\ref{theo.small.isogenies}.
The following proof uses arguments inspired by the ones of \cite[Lecture I, Proposition 7.1]{UlmerParkCity}, 
which proves a uniform upper bound for the order of the prime-to-$p$ $K$-rational torsion on an elliptic curve over~$K$.

		\begin{proof} 
		Fix a biseparable isogeny $\varphi:E_1\to E_2$, and consider the set of positive integers:
		\[\mathscr{D} := \left\{ \deg \phi, \ \phi: E_1\to E_2 \text{ biseparable isogeny}\right\} \subset \Z_{\geq 1}.\]
		By assumption, $\mathscr{D}$ contains a minimal element $d_0$, and there exists 
		a biseparable isogeny $\varphi_0 : E_1 \to E_2$ with $\deg\varphi_0 = d_0$. 
		Note that the degree $d_0$ of $\varphi_0$ is coprime to $p$, 
		for $\varphi_0$ is biseparable (see Lemma \ref{lemm:isog.ds.degree}).

		Let us now prove that $\varphi_0$ is cyclic \ie{}, that the group $H_0:=(\ker\varphi_0)(\bar{K})$ is cyclic.
		By the structure theorem for finite Abelian groups, and by the description of finite subgroups of $E_1$, 
		we know that $H_0$ is isomorphic to $\Z/m\Z\times\Z/n\Z$ for some integers $m,n\geq 1$ with $m\mid n$ and $mn = d_0$.
		In particular, the kernel of the multiplication-by-$m$ map $[m]:E_1\to E_1$ is a subgroup of $H_0$ 
		which is isomorphic to $(\Z/m\Z)^2$. Proposition \ref{prop:decomp.isog.factor} then implies that 
		the isogeny $\varphi_0$ can be factored as $\varphi_0 = \varphi_1\circ [m]$ for a unique isogeny $\varphi_1 : E_1\to E_2$.
		Taking degrees yields that  $\deg \varphi_0 = d_0 = \deg\varphi_1 \, m^2$.
		Being a divisor of $d_0$, $\deg\varphi_1$  must be coprime to $p$ so that, 
		by Lemma \ref{lemm:isog.ds.degree}, the isogeny $\varphi_1$ is biseparable.
		On the other hand, the degree of~$\varphi_0$ is minimal among all degrees of biseparable isogenies $E_1\to E_2$.
		We thus have $m=1$, and the kernel of $\varphi_0$ is isomorphic to $\Z/n\Z$.
		Hence $H_0$ is cyclic, as claimed.

		Since the degree of $\varphi_0$ is coprime to the characteristic of $K$, Lemma \ref{lemm:kernel.sep} shows that 
		the group $H_0$ is defined over a separable extension of $K$
		\ie{}, that the extension $K(H_0)/K$ is separable.
		Let $G_K$ denote the absolute Galois group of $K$. 
		We now prove that $H_0$ is stable under the action of $G_K$ on $E_1$ 
		(in other words, we show that $H_0$ is ``defined over $K$''). 

		To do so, we first prove that $^\sigma\varphi_0=\pm\varphi_0$  for all $\sigma\in G_K$.
		Let $\sigma\in G_K$ be an arbitrary automorphism. Since both $E_1$ and $E_2$ are defined over $K$,
		we get an isogeny $^\sigma\varphi_0: E_1\to  E_2$ of degree $d_0$.
		The composition of the dual $\widehat{\varphi_0}:E_2\to E_1$  with $^\sigma\varphi_0:E_1\to E_2$ yields 
		an endomorphism $\widehat{\varphi_0}\circ {^\sigma}\varphi_0$ of $E_1$.
		The curve $E_1$ being non-isotrivial, it has no non-trivial endomorphisms (see \S\ref{ssec:CM.isotriv}), 
		so that there exists an integer $n$ such that $\widehat{\varphi_0}\circ {^\sigma}\varphi_0 = [n]$ is the multiplication-by-$n$ map.
		Comparing degrees, we get that $n = \pm d_0$, whence 
		$\widehat{\varphi_0}\circ {^\sigma}\varphi_0 = [\pm d_0] = [\pm1]\circ \widehat{\varphi_0}\circ\varphi_0$.
		Therefore $\widehat{\varphi_0}\circ(^\sigma\varphi_0 - [\pm1]\circ\varphi_0) = 0_{E_1}$, and we deduce that 
		the image of the isogeny $^\sigma\varphi_0 - [\pm1]\circ\varphi_0$ is contained in the kernel of $\widehat{\varphi_0}$.
		Since the latter is finite, $^\sigma\varphi_0 - [\pm1]\circ\varphi_0$ must be constant equal to $0_{E_2}$. 
		Thus $^\sigma\varphi_0  = \pm\varphi_0$, as claimed.

		It then formally follows from the previous paragraph that $H_0$ is stable under the action of $G_K$.
		We may now apply Proposition \ref{prop:ff.bound} to the pair $(E_1, H_0)$, and we  infer that 
		$d_0 = \deg\varphi_0\leq \cO{} \max\{1,  g(K)\}.$
		\end{proof}

To conclude the proof of the general case of Theorem \ref{theo.small.isogenies}, we require the following lemma: 

		\begin{lemm}\label{lemm:reduction.4}
		Let $E_1, E_2$ be two non-isotrivial elliptic curves over $K$.
		We assume that $j(E_1)$ and $j(E_2)$ have the same inseparability degree, 
		and that there exists an isogeny $\varphi: E_1\to E_2$. 
		Then there exists a biseparable isogeny $\psi : E_1 \to E_2$ with $\deg\psi \mid \deg\varphi$.
		\end{lemm}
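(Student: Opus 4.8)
The goal is to start from an arbitrary isogeny $\varphi:E_1\to E_2$ and extract from it a \emph{biseparable} isogeny between the same two curves, whose degree divides $\deg\varphi$. The natural tool is the decomposition of Proposition~\ref{prop:decomposition.isogeny}, together with the hypothesis $\degins j(E_1)=\degins j(E_2)$ and Corollary~\ref{coro:dualdeg}. First I would apply Proposition~\ref{prop:decomposition.isogeny} to write
\[ \varphi: E_1 \xrightarrow{\ \Frob{p^e}\ } E_1^{(p^e)} \xrightarrow{\ \psi\ } E_2^{(p^f)} \xrightarrow{\ \Vers{p^f}\ } E_2, \]
with $\psi$ biseparable, $p^e=\degins\varphi$ and $p^f=\degins\hat\varphi$. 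By Corollary~\ref{coro:dualdeg} and the hypothesis that $j(E_1),j(E_2)$ have equal inseparability degree, we get $\degins\varphi=\degins\hat\varphi$, i.e.\ $e=f$. (If $K$ has characteristic $0$, there is nothing to prove since $\varphi$ is already biseparable; so assume $p>0$.)

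**Main step.** With $e=f$, Proposition~\ref{prop:isogeny.reduction} applies with $q=p^e=\min\{\degins\varphi,\degins\hat\varphi\}$: the isogeny $\varphi$ factors through the multiplication-by-$q$ map, say $\varphi = \phi\circ[q]$ for some isogeny $\phi:E_1\to E_2$. I claim $\phi$ is the desired biseparable isogeny. Indeed, comparing degrees gives $\deg\varphi = q^2\deg\phi$, so $\deg\phi$ divides $\deg\varphi$. It remains to check that $\phi$ is biseparable. For this, track inseparability degrees through $\varphi=\phi\circ[q]$: since $[q]=\Vers{q}\circ\Frob{q}$ has $\degins[q]=q$ (as $E_1$ is non-isotrivial, hence ordinary, by \S\ref{ss:isogenies.prelim}), multiplicativity of $\degins$ in compositions gives $\degins\varphi = \degins\phi\cdot\degins[q]$, i.e.\ $q = \degins\phi\cdot q$, forcing $\degins\phi=1$, so $\phi$ is separable. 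Dualizing $\varphi=\phi\circ[q]$ gives $\hat\varphi = [q]\circ\hat\phi$, and the same computation with $\hat\varphi$ in place of $\varphi$ (using $\degins\hat\varphi=q$) yields $\degins\hat\phi=1$, so $\hat\phi$ is separable as well. Hence $\phi$ is biseparable, and we set $\psi:=\phi$.

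**Expected obstacle.** The only real subtlety is confirming $e=f$, i.e.\ that the equal-inseparability-degree hypothesis on the $j$-invariants translates into $\degins\varphi=\degins\hat\varphi$; this is exactly Corollary~\ref{coro:dualdeg}, which reads $\degins\hat\varphi\cdot\degins j(E_2) = \degins\varphi\cdot\degins j(E_1)$, so the hypothesis $\degins j(E_1)=\degins j(E_2)$ gives it immediately. After that, everything is a matter of chasing inseparability degrees through the factorizations $\varphi=\phi\circ[q]$ and $\hat\varphi=[q]\circ\hat\phi$ and invoking Proposition~\ref{prop:isogeny.reduction} as a black box; the argument in positive characteristic uses that $[q]$ on a non-isotrivial (hence ordinary) curve has inseparability degree exactly $q=p^e$ rather than $q^2$, which is what makes the bookkeeping come out to $\degins\phi=1$.
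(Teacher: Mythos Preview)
Your proof is correct and follows essentially the same route as the paper: both use Corollary~\ref{coro:dualdeg} with the hypothesis to get $\degins\varphi=\degins\hat\varphi=:q$, then invoke Proposition~\ref{prop:isogeny.reduction} to factor $\varphi$ through $[q]$, and conclude that the quotient isogeny is biseparable of degree dividing $\deg\varphi$. The only cosmetic difference is in the last step: the paper reads off from the decomposition of Proposition~\ref{prop:decomposition.isogeny} that $\deg\psi=p^{-2e}\deg\varphi$ is coprime to $p$ and applies Lemma~\ref{lemm:isog.ds.degree}, whereas you track inseparability degrees directly through $\varphi=\phi\circ[q]$ and its dual.
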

		\begin{proof} We write $\degins\varphi=p^a$ and $\degins\hat{\varphi}=p^b$. 
		By assumption we have ${\degins j(E_2)}={\degins j(E_1)}$, hence Corollary~\ref{coro:dualdeg} implies that 
		$p^a = \degins\varphi = \degins\hat{\varphi}= p^b$, so that $a=b$. 
		Using the decomposition of $\varphi$ provided by Proposition \ref{prop:decomposition.isogeny}, 
		one deduces that the integer $m := p^{-2a}\deg\varphi $ is coprime to $p$.

		By Proposition~\ref{prop:isogeny.reduction}, the isogeny $\varphi$ factors through the multiplication-by-$p^a$ map on $E_1$: 
		\ie{}, there exists an isogeny $\psi:E_1 \to E_2$ so that $\varphi = \psi \circ [p^a]$.
		By multiplicativity of degrees,  $\psi$  has degree $m$ dividing~$\deg\varphi$. 
		Since $m$ is coprime to $p$, Lemma \ref{lemm:isog.ds.degree} ensures that $\psi$ is biseparable.
		\end{proof}

		\begin{proof}[Proof of Theorem \ref{theo.small.isogenies}] 
		Let $E_1$ and $E_2$ be non-isotrivial isogenous elliptic curves over $K$. 
		If $K$ has characteristic $0$, Proposition \ref{prop:small.ds.isogeny} proves an assertion 
		which is equivalent to Theorem \ref{theo.small.isogenies}.
		We may therefore assume that the characteristic $p$ of $K$ is positive.

		Let $\varphi:E_1\to E_2$ be a $\bar{K}$-isogeny. As the sought statement is symmetric in $E_1$ and $E_2$ 
		we may, up to permuting the roles of $E_1$ and $E_2$ and using duals, assume that $\degins j(E_1)\geq\degins j(E_2)$.

		We then  let $f$ be the non-negative integer such that $\degins j(E_1)/\degins j(E_2) =p^f$ 
		and, to lighten notation, we write $E'_2$ for $E_2^{(p^f)}$. 

		The composition of $\varphi:E_1\to E_2$ with $\Frob{p^f} :E_2\to E'_2$ yields an isogeny $E_1\to E'_2$.
		By construction of~$f$ we have $\degins j(E_1) = \degins j(E'_2)$, so that
		Lemma \ref{lemm:reduction.4}  ensures the existence of a biseparable isogeny $\psi :E_1\to E'_2$. 
		By Proposition~\ref{prop:small.ds.isogeny}, there exists an isogeny $\psi_0 :E_1\to E'_2$ with 
		$\deg\psi_0 \leq \cO{}\max\{1, g(K)\}$. 

		Composing $\psi_0 :E_1\to E'_2$ with $\Vers{p^f}:E'_2\to E_2$, we obtain an isogeny $\varphi_0 : E_1 \to E_2$ of degree
		\[\deg\varphi_0 = p^f\deg\psi_0 \leq \cO{}\max\{1, g(K)\} \ \frac{\degins j(E_1)}{\degins j(E_2)},\]
		which concludes the proof of Theorem \ref{theo.small.isogenies}. 
		\end{proof}

\subsection{Number of $K$-isomorphism classes in a $K$-isogeny class} %%

Let $K$ be a function field as above, and $E$ be a non-isotrivial elliptic curve over $K$.
For any $M\geq 1$, let us introduce the set
\[\Escr_K(E, M) := \left\{ E'/K : E' \text{ is $K$-isogenous to $E$, and }\degins j(E')\leq M\right\}\big/K\text{-isomorphism}.\]
(As before, if $K$ has characteristic $0$, one may ignore the condition on $\degins j(E')$.)

By construction, the elliptic curves $E'/K$ (whose $K$-isomorphism classes are) in $\Escr_K(E, M)$ have the same conductor as $E$.
In particular, they all have good reduction outside the set $S$ of places of $K$ where $E$ has bad reduction.
By a version of Shafarevich's theorem, there are finitely many 
$K$-isomorphism classes of elliptic curves over $K$ with good reduction outside $S$ and  
with bounded inseparability degree of $j$-invariant. The set $\Escr_K(E,M)$ is thus finite.
We prove an effective form of that statement 
(announced as Corollary~\ref{icoro:eff.sha} in the introduction), as follows:
		\begin{theo}\label{prop:effective.shaf}
		In the above setting, we have
		\begin{itemize}
		\item $\displaystyle \big|\Escr_K(E,M)\big| \leq 7^4 \max\{1, g(K)\}^2$ \ if $K$ has characteristic $0$,
		\item $\displaystyle \big|\Escr_K(E,M)\big| \leq 7^4 \max\{1, g(K)\}^2\left(\frac{\log M}{\log p} +1\right)$ \  
		if $K$ has characteristic $p>0$.
		\end{itemize}
		\end{theo}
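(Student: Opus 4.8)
\emph{Strategy.} The plan is to attach to every class in $\Escr_K(E,M)$ the kernel of a minimal isogeny out of $E$ --- a finite $K$-subgroup scheme of $E$ --- and then to count the subgroup schemes that occur, organising them by an ``inseparable parameter'' (few possible values) and a ``biseparable cyclic part'' (bounded by Proposition~\ref{prop:ff.bound}). So let $E'$ represent a class in $\Escr_K(E,M)$. Since $E$ is non-isotrivial it has trivial endomorphism ring (\S\ref{ssec:CM.isotriv}), hence $\Hom(E,E')$ is free of rank~$1$; pick a generator $\varphi_0:E\to E'$, unique up to $[\pm1]$ and of minimal degree among the non-zero elements of $\Hom(E,E')$. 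Because $E'$ is $K$-isogenous to $E$, Remark~\ref{rema:field.of.def} guarantees that $\varphi_0$ is defined over~$K$; therefore $G_{E'}:=\ker\varphi_0$ is a finite $K$-subgroup scheme of $E$ with $E'\simeq E/G_{E'}$ over~$K$, and the map $E'\mapsto G_{E'}$ is injective on $\Escr_K(E,M)$. It thus suffices to bound the number of subgroup schemes $G_{E'}$ that can arise.

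\emph{Isolating the inseparable parameter.} Using Proposition~\ref{prop:decomposition.isogeny} write $\varphi_0=\Vers{p^f}\circ\psi\circ\Frob{p^e}$ with $\psi:E^{(p^e)}\to E'^{(p^f)}$ biseparable, $p^e=\degins\varphi_0$ and $p^f=\degins\hat{\varphi_0}$. By Remark~\ref{rema:isog.min.sep} one has $\min\{e,f\}=0$, so the single integer $n:=e-f$ carries all the inseparability information. By Corollary~\ref{coro:dualdeg}, $p^{n}=\degins j(E')/\degins j(E)$, whence $n=v_p(\degins j(E'))-v_p(\degins j(E))$; since $v_p(\degins j(E'))\ge 0$ and $\degins j(E')\le M$, the value $n$ belongs to a set of at most $\lfloor\log M/\log p\rfloor+1$ integers (only $n=0$ when $\mathrm{char}\,K=0$). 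Moreover $\psi$ is, up to sign, a generator of the rank-$1$ module $\Hom(E^{(p^e)},E'^{(p^f)})$: it is therefore $K$-rational (the maps $\Frob{p^e},\Vers{p^f}$ are defined over $K$, and pre-/post-composition with an isogeny is injective on these torsion-free $\Z$-modules) and \emph{cyclic} (by the argument in the proof of Proposition~\ref{prop:small.ds.isogeny}), so its kernel $H:=\ker\psi$ is a $G_K$-stable cyclic group of order coprime to $p$. One checks that $G_{E'}$ is recovered from the pair $(n,H)$: if $n\ge 0$ (so $f=0$) then $G_{E'}=(\Frob{p^n})^{-1}(H)$ with $H\subseteq E^{(p^n)}$; if $n<0$ (so $e=0$) then $G_{E'}$ is étale, equal to the direct sum of $H\subseteq E$ with the \emph{unique} étale $K$-subgroup scheme of $E$ of order $p^{-n}$ (unique because $E$, being non-isotrivial, is ordinary). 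Hence $\Escr_K(E,M)$ injects into the set of admissible pairs $(n,H)$.

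\emph{Counting.} Fix an admissible $n$. Applying Proposition~\ref{prop:ff.bound} to $(E^{(p^n)},H)$ when $n\ge 0$, and to $(E,H)$ when $n<0$ --- legitimate since $\#H$ is coprime to $p$ --- gives $\#H\le\cO\max\{1,g(K)\}$. For a fixed $d$ coprime to $p$, the ordinary elliptic curve $E^{(p^n)}$ satisfies $E^{(p^n)}[d](\bar K)\simeq(\Z/d\Z)^2$, which contains exactly $[\SL_2(\Z):\Gamma_0(d)]$ cyclic subgroups of order $d$; so the number of possible $H$ is at most $\sum_{d\le \cO\max\{1,g(K)\}}[\SL_2(\Z):\Gamma_0(d)]$. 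Since $[\SL_2(\Z):\Gamma_0(d)]=d\prod_{\ell\mid d}(1+\ell^{-1})\le\sigma(d)$ (the sum-of-divisors function) and $\sum_{d\le N}\sigma(d)=\sum_{e\le N}e\lfloor N/e\rfloor\le N^2$, this number is at most $(\cO\max\{1,g(K)\})^2=7^4\max\{1,g(K)\}^2$. Multiplying by the number of admissible values of $n$ yields $\big|\Escr_K(E,M)\big|\le 7^4\max\{1,g(K)\}^2$ when $\mathrm{char}\,K=0$, and $\big|\Escr_K(E,M)\big|\le 7^4\max\{1,g(K)\}^2\big(\log M/\log p+1\big)$ when $\mathrm{char}\,K=p>0$, which is the assertion (and also reproves finiteness).

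\emph{Main obstacle.} The counting step is routine once the preceding one is in place; the real work is the scheme-theoretic bookkeeping of the second paragraph, namely verifying that the finite $K$-subgroup scheme $G_{E'}$ is genuinely recovered \emph{over $K$} --- not merely over $\bar K$, which would control only the weaker count of $\bar K$-isomorphism classes --- from the pair $(n,H)$. This relies on the structure of finite subgroup schemes of an ordinary elliptic curve over the imperfect field $K$: that the connected part of $\ker\Frob{p^n}$ is $\mu_{p^n}$, that the étale $p$-primary subgroup scheme of a prescribed order is unique, and that the Frobenius, Verschiebung and quotient morphisms involved are all defined over $K$. Once these facts are recorded, the bound on the range of $n$, the cyclicity of $\psi$, and the elementary estimate $\sum_{d\le N}[\SL_2(\Z):\Gamma_0(d)]\le N^2$ are immediate.
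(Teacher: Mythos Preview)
Your proof is correct, and the overall architecture---stratify by the inseparability parameter, bound the biseparable cyclic kernel via Proposition~\ref{prop:ff.bound}, then count cyclic subgroups by the elementary estimate $\sum_{d\le N}\psi(d)\le N^2$---is exactly the paper's. The characteristic~$0$ case is essentially identical to the paper's.

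In positive characteristic, however, your organisation differs from the paper's in a way worth noting. The paper partitions $\Escr_K(E,M)$ into strata $\Escr(u)=\{E':\degins j(E')=p^u\}$ and, for each non-empty stratum, \emph{chooses a new basepoint} $E_1\in\Escr(u)$. Any $E_2\in\Escr(u)$ is then linked to $E_1$ by a biseparable $K$-isogeny (Lemma~\ref{lemm:reduction.4} plus Lemma~\ref{lemm:field.of.def}), and the characteristic~$0$ argument applies verbatim within each stratum. This sidesteps all scheme-theoretic issues: only cyclic \'etale prime-to-$p$ kernels ever appear. You instead keep $E$ as a fixed basepoint throughout, decompose each minimal $\varphi_0:E\to E'$ via Proposition~\ref{prop:decomposition.isogeny}, and parametrise $G_{E'}=\ker\varphi_0$ by the pair $(n,H)$. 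This is more uniform and arguably more conceptual, but the price is precisely the ``main obstacle'' you identified: you must verify that $G_{E'}$ is recovered over~$K$ from $(n,H)$, which forces you into the structure theory of finite $K$-subgroup schemes of an ordinary elliptic curve (connected--\'etale pieces, uniqueness of the \'etale $p$-part). The paper's basepoint-shifting trick avoids this work entirely, at the cost of a slightly less canonical parametrisation.
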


We are grateful to one of the referees for remarks that led to an improvement on our original bound.
To prove the proposition, we take inspiration from section 2 of \cite{MasserW3}, 
which proves the analogue of Theorem~\ref{prop:effective.shaf} for elliptic curves over number fields.
We need the following two lemmas:

		\begin{lemm}\label{lemm:field.of.def} 
		Let $E_1, E_2$ be two non-isotrivial elliptic curves over $K$.
		We assume that there exists an isogeny $\varphi:E_1\to E_2$ which is defined over $K$.
		Then all isogenies $E_1\to E_2$ are defined over $K$.
		\end{lemm}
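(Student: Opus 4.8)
The plan is to reduce the whole statement to the distinguished generator of $\Hom(E_1,E_2)$. Since $E_1$ and $E_2$ are non-isotrivial and isogenous, the discussion in \S\ref{ss:isogenies.prelim} shows that $\Hom(E_1,E_2)$ is a free $\Z$-module of rank $1$; fix a generator $\varphi_0:E_1\to E_2$. Every isogeny $E_1\to E_2$ is then of the form $n\varphi_0=[n]\circ\varphi_0$ for some non-zero $n\in\Z$, and the multiplication-by-$n$ map $[n]$ is defined over $K$. Hence it suffices to prove that $\varphi_0$ itself is defined over $K$.

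Granting for a moment that $\varphi_0$ is already defined over the separable closure $K\sep$ of $K$ in $\bar K$, the conclusion follows by Galois descent. Put $G_K=\Gal(K\sep/K)$ and let $\sigma\in G_K$. As $E_1$ and $E_2$ are defined over $K$, the conjugate ${}^\sigma\varphi_0$ is again an isogeny $E_1\to E_2$, of the same (minimal) degree $\deg\varphi_0$; since $\varphi_0$ generates $\Hom(E_1,E_2)$, this forces ${}^\sigma\varphi_0=\pm\varphi_0$ --- exactly the computation already carried out inside the proof of Proposition~\ref{prop:small.ds.isogeny}. Now write the given $K$-isogeny as $\varphi=m\varphi_0$ with $m\neq 0$. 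Applying $\sigma$ and using that $\varphi$ is defined over $K$ yields $m\,{}^\sigma\varphi_0=m\varphi_0$, whence ${}^\sigma\varphi_0=\varphi_0$ because $\Hom(E_1,E_2)$ is torsion-free. Thus $\varphi_0$ is $G_K$-invariant, and being defined over $K\sep$ it descends to $K$; therefore every isogeny $[n]\circ\varphi_0$ is defined over $K$.

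It remains to justify that $\varphi_0$ is defined over $K\sep$, which is the only point requiring care (and is vacuous when $K$ has characteristic $0$, since then $\bar K=K\sep$). In positive characteristic, I would invoke the decomposition of Proposition~\ref{prop:decomposition.isogeny} together with Remark~\ref{rema:isog.min.sep}: one may write $\varphi_0=\Vers{p^f}\circ\psi\circ\Frob{p^e}$, where $\Frob{p^e}$ and $\Vers{p^f}$ are defined over $K$ and $\psi$ is a biseparable isogeny between Frobenius twists of $E_1$ and $E_2$ (hence between curves defined over $K$). By Lemma~\ref{lemm:isog.ds.degree} the degree of $\psi$ is coprime to $p$, so $\ker\psi$ is contained in the prime-to-$p$ torsion of its (Frobenius-twisted) domain; that torsion is $K\sep$-rational by \cite[\S1]{SerreTate} (as used in the proof of Lemma~\ref{lemm:kernel.sep}), so $\ker\psi$ is a finite subgroup scheme defined over $K\sep$, and by Proposition~\ref{prop:isog.quotient} the quotient isogeny and its target are defined over $K\sep$ as well. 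Finally, since a non-isotrivial elliptic curve is ordinary and one checks its automorphism scheme is étale, any $\bar K$-isomorphism between two such curves defined over $K\sep$ is itself defined over $K\sep$ (as $K\sep$ is separably closed). Identifying $\psi$ with the quotient isogeny through such an isomorphism shows that $\psi$, and hence $\varphi_0=\Vers{p^f}\circ\psi\circ\Frob{p^e}$, is defined over $K\sep$. This closes the argument; the genuine obstacle is precisely this reduction of the biseparable part $\psi$ to a separable situation, everything else being formal.
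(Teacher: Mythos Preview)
Your argument is correct, but it takes a genuinely different route from the paper's proof. The paper proceeds by a direct scheme-theoretic factorisation: writing $\varphi=\varphi_0\circ[m]$, one observes that the schematic kernels satisfy $\ker[m]\subset\ker\varphi$ as finite group schemes over $K$, and then invokes Mumford's quotient theorem (\cite[\S12, Theorem 1]{MumfordAV}) to factor $\varphi=\psi\circ[m]$ with $\psi$ a $K$-isogeny; uniqueness forces $\psi=\varphi_0$, and one is done in three lines. No separate treatment of the inseparable part is needed, because the factorisation is carried out at the level of group schemes rather than $\bar K$-points.

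Your approach trades this single scheme-theoretic input for a more elementary but longer argument: Galois descent from $K\sep$, preceded by a verification that $\varphi_0$ is already defined over $K\sep$. The descent step is clean (and nicely reuses the ${}^\sigma\varphi_0=\pm\varphi_0$ computation), while the $K\sep$-definability step is where the work lies. Two small remarks on that step: first, what you really need is that every $\bar K$-automorphism of a non-isotrivial elliptic curve is $[\pm1]$, hence defined over the ground field---this is immediate from Lemma~\ref{lemm:CM.isotriv} and makes the ``automorphism scheme is \'etale'' phrasing unnecessary; second, the version of Proposition~\ref{prop:isog.quotient} you invoke (quotient defined over the field of definition of the subgroup) is slightly stronger than the paper's statement, though of course standard. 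In summary, both proofs are valid; the paper's is shorter and uniform in the characteristic, while yours avoids the reference to Mumford at the cost of handling the purely inseparable direction by hand.
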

		\begin{proof}
		Since $E_1$ is non-isotrivial and since $E_1$, $E_2$ are isogenous, the $\Z$-module $\Hom(E_1, E_2)$ is free of rank~$1$; 
		we choose an isogeny $\varphi_0: E_1\to E_2$ such that $\Hom(E_1, E_2)=\Z\,\varphi_0$. 
		To conclude, it suffices to check that $\varphi_0$ is defined over $K$.
		
		We may write $\varphi$  as $\varphi = \varphi_0\circ [m]$ for some integer $m\neq 0$. 
		Let $\Hcal:=\ker\varphi$ denote the schematic kernel of~$\varphi$, and  
		$\Ecal_{1,m}:=\ker[m]$ denote the subgroup scheme of $E_1$ formed by $m$-torsion points.
		Both $\Hcal$ and~$\Ecal_{1,m}$ are (non-necessarily reduced) group schemes over $K$.
		Since $\varphi = \varphi_0\circ [m]$, $\Ecal_{1,m}$ is a subgroup scheme of~$\Hcal$. 
		
		We then use Theorem~1 in \cite[\S12]{MumfordAV} (which is a scheme-theoretic version of Proposition~\ref{prop:decomp.isog.factor}): 
		this shows that the isogeny~$\varphi$ factors uniquely (up to $K$-isomorphism) through $[m]$ 
		as~$\varphi = \psi\circ[m]$, where $\psi:E_1\to E_2$ is a $K$-isogeny. 
		The uniqueness of $\psi$ implies that $\varphi_0=\psi$, which is defined over $K$.
		\end{proof}

		\begin{lemm}\label{lemm:cyc.sbgps} 
		Let $E_1$ be a non-isotrivial elliptic curve over $K$. 
		For any real number $X\geq 1$, we denote by $\cycsbgp(E_1,X)$ 
		the number of cyclic subgroups $H$ of $E_1$ with $|H|\leq X$ and $|H|$ coprime to the characteristic of $K$. 
		Then we have $\cycsbgp(E_1, X)\leq X^2$.
		\end{lemm}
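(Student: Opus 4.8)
The plan is to reduce the count to the classical enumeration of cyclic subgroups of $(\Z/n\Z)^2$, and then to estimate a sum of a standard multiplicative function.

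First I would partition the cyclic subgroups counted by $\cycsbgp(E_1,X)$ according to their order $n$, which ranges over the integers $1\le n\le X$ that are coprime to the characteristic $p$ of $K$. For such an $n$, any cyclic subgroup $H\subset E_1$ of order $n$ is contained in the $n$-torsion subgroup $E_1[n](\bar{K})$; since $n$ is coprime to the characteristic, it is standard (see \emph{e.g.} \cite[Chapter III]{AEC}) that $E_1[n](\bar{K})\simeq(\Z/n\Z)^2$. Hence the number of cyclic subgroups of $E_1$ of order $n$ equals the number of cyclic subgroups of order $n$ in $(\Z/n\Z)^2$, which is the quantity $\psi(n)=n\prod_{\ell\mid n}(1+1/\ell)=[\SL_2(\Z):\Gamma_0(n)]$ already encountered in \eqref{eq:degree.degeneration}. (Indeed $\psi$ is multiplicative, and for a prime power $\ell^{k}$ there are $\ell^{2k}-\ell^{2k-2}$ elements of order $\ell^{k}$ in $(\Z/\ell^{k}\Z)^2$, each cyclic subgroup of order $\ell^{k}$ being generated by $\varphi(\ell^{k})$ of them, which gives $\psi(\ell^{k})=\ell^{k-1}(\ell+1)$ such subgroups.) Summing over the admissible $n$, I obtain
\[\cycsbgp(E_1,X)\ =\ \sum_{\substack{1\le n\le X\\ \gcd(n,p)=1}}\psi(n)\ \le\ \sum_{1\le n\le X}\psi(n).\]

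It then remains to bound the right-hand side by $X^2$. For this I would compare $\psi$ with the sum-of-divisors function $\sigma$: both are multiplicative, and $\psi(\ell^{k})=\ell^{k}+\ell^{k-1}\le 1+\ell+\cdots+\ell^{k}=\sigma(\ell^{k})$ for every prime power, whence $\psi(n)\le\sigma(n)$ for all $n\ge 1$. Exchanging the order of summation then yields
\[\sum_{1\le n\le X}\psi(n)\ \le\ \sum_{1\le n\le X}\sigma(n)\ =\ \sum_{1\le n\le X}\ \sum_{d\mid n}d\ =\ \sum_{1\le d\le X}d\Big\lfloor\frac{X}{d}\Big\rfloor\ \le\ \sum_{1\le d\le X}X\ \le\ X^2,\]
which is precisely the claimed inequality.

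The argument is essentially a routine computation, and I expect no serious obstacle. The only point requiring a little care is the identification of the number of order-$n$ cyclic subgroups of $E_1$ with $\psi(n)$: this uses that $E_1[n]$ is free of rank $2$ over $\Z/n\Z$ (which is where coprimality of $n$ with $p$ is used) together with the elementary prime-power count recalled above. Everything else is a direct manipulation of finite sums.
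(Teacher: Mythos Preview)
Your argument is correct and follows essentially the same route as the paper: partition by order, use the $(\Z/n\Z)^2$ structure of $E_1[n]$ to identify the count with $\psi(n)$, bound $\psi$ by $\sigma$ via multiplicativity, and estimate $\sum_{n\le X}\sigma(n)\le X^2$ by the same divisor-sum interchange. There is nothing to add.
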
		
		\begin{proof}
		Let $d\geq 1$ be an integer coprime to the characteristic $p$ of $K$.
		Given the structure of the $d$-torsion subgroup of $E_1$,
		 the number of cyclic subgroups of $E$ of order $d$ equals $\psi(d)$. 
		(Recall indeed that the group $(\Z/d\Z)^2$ contains $\psi(d)$ cyclic subgroups of order $d$.)
		For any $n\geq 1$, an elementary computation shows that $\psi(n) \leq \sigma(n)$, 
		where $\sigma(n)$ denotes the sum of divisors of $n$ (both functions are multiplicative, and the inequality holds for prime powers). 
		One easily sees that, for $X\geq 1$, 
		\begin{align*}
		\sum_{1\leq n\leq X} \sigma(n)
		&\leq \sum_{1\leq n\leq X} \sum_{d\mid n} d
		\leq  \sum_{1\leq d\leq X} \sum_{\substack{1\leq n\leq X \\ \text{s.t. }  nd \leq X}} d
		=  \sum_{  1\leq d \leq X}   d \, \left\lfloor\frac{X}{d}\right\rfloor \leq X^2.
		\end{align*}
		We therefore obtain the desired upper bound:  
		\[\cycsbgp(E_1, X) =\sum_{\substack{1\leq d\leq X \\ (d,p)=1}} \psi(d)
		\leq \sum_{1\leq n\leq X} \psi(n)  \leq \sum_{1\leq n \leq X} \sigma(n)\leq X^2.\]
		\end{proof}
		
		\begin{proof}[Proof of Theorem~\ref{prop:effective.shaf}] 
		For clarity, we first prove the assertion in the case when  $K$ has characteristic~$0$.
		Let $E'$ be an elliptic curve over $K$ whose $K$-isomorphism class lies in $\Escr_K(E, M)$, 
		and choose an isogeny ${\varphi_0 \in\Hom(E, E')}$ with minimal degree. 
		Proposition~\ref{prop:small.ds.isogeny} and its proof ensure that 
		$H_0 := (\ker\psi_0)(\bar{K})$ is cyclic of order
		$|H_0|=\deg\psi_0\leq c_K$, where we have set $c_K:=\cO{}\max\{1, g(K)\}$.
		The elliptic curve~$E'$ is then $K$-isomorphic to the quotient $E/H_0$, because 
		$\psi_0$ is defined over~$K$ by Lemma \ref{lemm:field.of.def}.
		This entails that the cardinality of $\Escr_K(E, M)$ is no greater than $\cycsbgp(E, c_K)$.
		With Lemma \ref{lemm:cyc.sbgps}, we conclude that $|\Escr_K(E,M)|\leq c_K^2$, 
		which proves the theorem in the characteristic $0$ case. 
		\\
		
		We now assume that the characteristic $p$ of $K$ is positive.
		For any non-negative integer $u$, we let
		\[ \Escr(u) := \left\{ E'/K : E' \text{ is $K$-isogenous to $E$, and }\degins j(E')=p^u\right\}\big/K\text{-isomorphism}.\]
		We can write the set $\Escr_K(E, M)$ as a disjoint union 
		$\Escr_K(E, M) = \bigcup_{0\leq u\leq m} \Escr(u)$ where  $m=\log M/\log p$.
		
		Let us prove that for any power $p^u$ of $p$, $|\Escr(u)|$ has cardinality at most $c_K^2$. 
		Given an integer $u\geq 0$ such that the set $\Escr(u)$ is non-empty, 
		let $E_1$ be an elliptic curve over $K$ whose $K$-isomorphism class lies in $\Escr(u)$.
		Any elliptic curve $E_2$ over $K$ whose isomorphism class lies in $\Escr(u)$ is then $K$-isogenous to $E_1$. 
		Furthermore, since the inseparability degrees of the $j$-invariants of $E_1$ and $E_2$ are the same, 
		we know from Lemma \ref{lemm:reduction.4} and Lemma \ref{lemm:field.of.def} that $E_1$ and $E_2$ are linked 
		by a biseparable isogeny which is defined over $K$. 
		We then carry out, as we may, the same argument as in the characteristic $0$ case above:
		this entails that the cardinality of $\Escr(u)$ is no greater than $\cycsbgp(E_1, c_K)$.
		Lemma~\ref{lemm:cyc.sbgps} then yields that $|\Escr(u)|\leq c_K^2$.
		
		If $p>0$, we therefore conclude that 
		\[ |\Escr_K(E, M)| \leq \sum_{0\leq u\leq m} |\Escr(u)| \leq (m+1)\, c_K^2 = c_K^2 \, \left(\frac{\log M}{\log p}+1\right),\]
		as was to be shown. 
		\end{proof}

\subsection{Back to isogeny classes}\label{ss:back.isog.classes} %%

We close this paper by going back to the situation studied in \S\ref{ss:isog.classes}, 
and by recovering a finiteness result.
Let $K$ be a function field as above, and fix a non-isotrivial elliptic curve~$E$ over~$K$. 
Recall that $\Iscr_{\mathrm{bs}}(E)$ denotes the set of elliptic curves $E'/\bar{K}$ which are biseparably isogenous to~$E$. 
For any $B\geq 0$ and $D\geq 1$, consider the set 
\[ \Jscr'_K(E, B, D) 
= \big\{E'\in\Iscr_{\mathrm{bs}}(E) : \hmod(E')\leq B \text{ and }   [K(j(E')):K]\leq D \big\}\big/\,\bar{K}\text{-isomorphism}.\]
By Theorem~\ref{theo:isog.hmod}, all elliptic curves $E'\in\Iscr_{\mathrm{bs}}(E)$ satisfy $\hmod(E')=\hmod(E)$. 
The set $\Jscr'_K(E, B, D)$ is hence empty  if $B<\hmod(E)$ and, for $B\geq\hmod(E)$, we have 
\[\Jscr'_K(E, B, D) = \big\{E'\in\Iscr_{\mathrm{bs}}(E) : [K(j(E')):K]\leq D \big\}\big/\,\bar{K}\text{-isomorphism}.\]
Note that the set $\Jscr_K(E, B)$ studied in \S\ref{ss:isog.classes} is the union $\bigcup_{D\geq 1} \Jscr'_K(E, B, D)$.
We prove the following bound:

		\begin{prop}\label{prop:finite}	
		For any $B\geq 0$ and $D\geq 1$, the set $\Jscr'_K(E, B, D)$ is finite. 
		Moreover, we have
		\[|\Jscr'_K(E,B,D)| \leq  D^2 \,  \hmod(E)^2.\] 
		\end{prop}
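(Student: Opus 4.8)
The plan is to show that every $\bar{K}$-isomorphism class in $\Jscr'_K(E,B,D)$ is represented by a quotient $E/H$ for some cyclic subgroup $H\subset E$ of order prime to $p$ with $|H|\leq D\,\hmod(E)$, and then to count such subgroups using Lemma~\ref{lemm:cyc.sbgps}. To begin, I would invoke Theorem~\ref{theo:isog.hmod}: every $E'\in\Iscr_{\mathrm{bs}}(E)$ satisfies $\hmod(E')=\hmod(E)$, so $\Jscr'_K(E,B,D)=\varnothing$ when $B<\hmod(E)$ and there is nothing to prove. Assume henceforth $B\geq\hmod(E)$. Since $E$ is non-isotrivial and defined over $K$, its $j$-invariant is a non-constant element of $K$, so $\hmod(E)=h(j(E))$ is a positive integer; in particular $D\,\hmod(E)\geq 1$.

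Next I would fix a class in $\Jscr'_K(E,B,D)$ and a representative $E'$. Being biseparably isogenous to $E$ depends only on the $\bar{K}$-isomorphism class of $E'$, hence only on $j(E')$; since I am counting $\bar{K}$-classes, I may replace $E'$ by any curve with the same $j$-invariant and therefore take $E'$ to be defined over $L:=K(j(E'))$, a field with $[L:K]\leq D$. The curve $E'$ is non-isotrivial (its modular height is $\hmod(E)>0$) and isogenous to $E$, so over $L$ the $\Z$-module $\Hom(E,E')$ is free of rank~$1$ (there is no CM, by \S\ref{ssec:CM.isotriv}); fix a generator $\varphi_0\colon E\to E'$. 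A biseparable isogeny $E\to E'$ exists by hypothesis, lies in $\Z\,\varphi_0$, and has degree prime to $p$ (Lemma~\ref{lemm:isog.ds.degree}); as $\deg\varphi_0$ divides that degree, it too is prime to $p$, so $\varphi_0$ is itself biseparable. The subgroup $H_0:=(\ker\varphi_0)(\bar{K})$ is cyclic: otherwise $H_0\supseteq E[m]$ for some $m>1$ (with $m$ prime to $p$, as $m\mid\deg\varphi_0$), and Proposition~\ref{prop:decomp.isog.factor} would factor $\varphi_0$ as $\lambda\circ[m]$ with $\deg\lambda<\deg\varphi_0$, contradicting that $\varphi_0$ generates $\Hom(E,E')=\Z\,\varphi_0$. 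Finally $H_0$ is $G_L$-stable: for $\sigma\in G_L$ the conjugate ${}^{\sigma}\varphi_0$ again belongs to $\Hom(E,E')=\Z\,\varphi_0$ (both $E$ and $E'$ are defined over $L$), so ${}^{\sigma}\varphi_0=n_\sigma\,\varphi_0$, and comparing degrees forces $n_\sigma=\pm1$, whence $\sigma(H_0)=\ker({}^{\sigma}\varphi_0)=H_0$.

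With these three properties in hand I would apply Proposition~\ref{prop:ff.bound.V2} to the pair $(E,H_0)$ over $L$ to obtain $|H_0|=\deg\varphi_0\leq[L:K]\,\hmod(E)\leq D\,\hmod(E)$. Since $\varphi_0$ is separable with kernel $H_0$, its target $E'$ is $\bar{K}$-isomorphic to the quotient $E/H_0$ furnished by Proposition~\ref{prop:isog.quotient}. Hence every element of $\Jscr'_K(E,B,D)$ is the class of $E/H$ for some cyclic $H\subset E$ of order prime to $p$ and at most $D\,\hmod(E)$, so
\[|\Jscr'_K(E,B,D)|\leq\cycsbgp\!\big(E,\,D\,\hmod(E)\big)\leq\big(D\,\hmod(E)\big)^2=D^2\,\hmod(E)^2\]
by Lemma~\ref{lemm:cyc.sbgps}; in particular the set is finite.

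The main obstacle is the descent bookkeeping of the middle paragraph: one must check carefully that $E'$ may be chosen over $L=K(j(E'))$ and that the kernel of the minimal isogeny $\varphi_0$ is genuinely $G_L$-stable, for it is precisely the non-uniform, ``degree-sensitive'' modular-curve estimate of Proposition~\ref{prop:ff.bound.V2} --- rather than the genus bound of Proposition~\ref{prop:ff.bound} --- that yields the clean dependence on $[K(j(E')):K]$ in the final inequality.
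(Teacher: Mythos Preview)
Your proof is correct and follows the same approach as the paper's: reduce to a cyclic, Galois-stable kernel of a minimal biseparable isogeny, apply Proposition~\ref{prop:ff.bound.V2} over $L=K(j(E'))$, and finish with Lemma~\ref{lemm:cyc.sbgps}. Your middle paragraph is in fact a spelled-out version of what the paper abbreviates as ``by the same arguments as in the proof of Proposition~\ref{prop:small.ds.isogeny}'', and your choice of $\varphi_0$ as a generator of $\Hom(E,E')$ (then observed to be biseparable) is equivalent to the paper's ``minimal degree among biseparable isogenies''.
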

		
		\begin{proof} 	
		Let $E'\in\Iscr_{\mathrm{bs}}(E)$ be an elliptic curve which is biseparably isogenous to $E$.
		Let $j(E')\in \bar{K}$ denote its $j$-invariant and $K':=K(j(E'))$. 
		If necessary, we replace $E'$ by a $\bar{K}$-isomorphic elliptic curve~$E_2$ defined over $K'$.
		We write $E_1$ for the base-change $E_1:=E\times_KK'$.
		
		By assumption, there exists a biseparable isogeny $\varphi:E_1\to E_2$.
		We may assume (see the proof of Proposition \ref{prop:small.ds.isogeny}) that 
		$\varphi = \varphi_0$ has minimal degree among all biseparable isogenies $E_1\to E_2$.
		By the same arguments as in the proof of Proposition \ref{prop:small.ds.isogeny}, 
		the group $H_0:=(\ker\varphi_0)(\bar{K})$ is then cyclic and stable under 
		the action of the absolute Galois group $G_{K'}=\Gal(\bar{K}/{K'})$.
		Proposition \ref{prop:ff.bound.V2} then yields the bound
		\[|H_0| = \deg\varphi_0 \leq [K':K] \, \hmod(E_1) \leq D \,  \hmod(E).\]
		The elliptic curve $E'\simeq E_2$ is then $\bar{K}$-isomorphic to the quotient $E_1/H_0$.
		Hence, the cardinality  of the set $\Jscr'_K(E,B,D)$ does not exceed the number $\cycsbgp(E_1, D \, \hmod(E))$, 
		introduced in the previous subsection, of cyclic subgroups $H$ of~$E_1$ 
		with order at most $D \, \hmod(E)$ and with $|H|$ coprime to the characteristic of $K$.
		By  Lemma~\ref{lemm:cyc.sbgps}, we thus have $|\Jscr'_K(E,B,D)| \leq (D \, \hmod(E))^2 = D^2 \,  \hmod(E)^2. $
		
		This proves the finiteness of $\Jscr'_K(E,B,D)$ and the  asserted upper bound on its cardinality.
		\end{proof}

%%%%%%%%%%%%%%%%%%%%%%%%%%%%%%%%%%%%%%%%%%%%%%%%%%%%%
\noindent\hfill\rule{7cm}{0.5pt}\hfill\phantom{.}

\paragraph{Acknowledgements}
RG is supported by the Swiss National Science Foundation, through the SNSF Professorship \#170565 awarded to Pierre Le Boudec, 
and received additional funding from ANR project ANR-17-CE40-0012 (FLAIR). 
He wishes to thank Universit\"at Basel for providing excellent working conditions. 
FP thanks the ANR project ANR-17-CE40-0012 (FLAIR) and the IRN GANDA (CNRS).

Both authors would like to thank 
Gabriel Dill, Marc Hindry, 
Pierre Le Boudec, Samuel Le Fourn, 
Harry Schmidt and Douglas Ulmer 
for stimulating conversations about the topic of this work. 
They also express their deepest gratitude to Ga\"el R\'emond for his valuable comments and suggestions on an earlier version of the article. 
Finally, the authors thank the referee for their careful reading and beneficial input. 

%%%%%%%%%%%%%%%%%%%%%%%%%%%%%%%%%%%%%%%%%%%%%%%%%%%%%

\newcommand{\mybref}[1]{\hspace*{-2pt}{\textcolor{orange}{\footnotesize $\uparrow$ #1}}}
\renewcommand*{\backref}[1]{\mybref{#1}}
\hypersetup{linkcolor=orange!80}

\small
\bibliographystyle{alpha}
\bibliography{Biblio_jinv.bib}

\begin{thebibliography}{Mum08}

\bibitem[BLR90]{BLR}
Siegfried Bosch, Werner L\"{u}tkebohmert, and Michel Raynaud.
\newblock {\em N\'{e}ron models}, volume~21 of {\em Ergebnisse der Mathematik
  und ihrer Grenzgebiete (3)}.
\newblock Springer-Verlag, Berlin, 1990.

\bibitem[BLV09]{BanLonVig}
Andrea Bandini, Ignazio Longhi, and Stefano Vigni.
\newblock Torsion points on elliptic curves over function fields and a theorem
  of {I}gusa.
\newblock {\em Expo. Math.}, 27(3):175--209, 2009.

\bibitem[BPR21]{BPR21}
Florian Breuer, Fabien Pazuki, and Mahefason~H. Razafinjatovo.
\newblock Heights and isogenies of {D}rinfeld modules.
\newblock {\em Acta Arithmetica}, 197(2):111--128, 2021.

\bibitem[Coh84]{Cohen}
Paula Cohen.
\newblock On the coefficients of the transformation polynomials for the
  elliptic modular function.
\newblock {\em Math. Proc. Cambridge Philos. Soc.}, 95(3):389--402, 1984.

\bibitem[DD99]{DavidDenis}
Sinnou David and Laurent Denis.
\newblock Isog\'{e}nie minimale entre modules de {D}rinfel'd.
\newblock {\em Math. Ann.}, 315(1):97--140, 1999.

\bibitem[DGS94]{DwoGerSul}
Bernard Dwork, Giovanni Gerotto, and Francis~J. Sullivan.
\newblock {\em An introduction to {$G$}-functions}, volume 133 of {\em Annals
  of Mathematics Studies}.
\newblock Princeton University Press, Princeton, NJ, 1994.

\bibitem[DI95]{DiamondIm}
Fred Diamond and John Im.
\newblock Modular forms and modular curves.
\newblock In {\em Seminar on {F}ermat's {L}ast {T}heorem ({T}oronto, {ON},
  1993--1994)}, volume~17 of {\em CMS Conf. Proc.}, pages 39--133. Amer. Math.
  Soc., Providence, RI, 1995.

\bibitem[GR14]{GaudronRemond}
\'{E}ric Gaudron and Ga\"{e}l R\'{e}mond.
\newblock Th\'{e}or\`eme des p\'{e}riodes et degr\'{e}s minimaux
  d'isog\'{e}nies.
\newblock {\em Comment. Math. Helv.}, 89(2):343--403, 2014.

\bibitem[Hab13]{Habegger}
Philipp Habegger.
\newblock Special points on fibered powers of elliptic surfaces.
\newblock {\em J. Reine Angew. Math.}, 685:143--179, 2013.

\bibitem[Hus04]{Hus}
Dale Husem\"{o}ller.
\newblock {\em Elliptic curves}, volume 111 of {\em Graduate Texts in
  Mathematics}.
\newblock Springer-Verlag, New York, second edition, 2004.
\newblock With appendices by Otto Forster, Ruth Lawrence and Stefan Theisen.

\bibitem[KM85]{KaMa}
Nicholas~M. Katz and Barry Mazur.
\newblock {\em Arithmetic moduli of elliptic curves}, volume 108 of {\em Annals
  of Mathematics Studies}.
\newblock Princeton University Press, Princeton, NJ, 1985.

\bibitem[Lan83]{LangFunDio}
Serge Lang.
\newblock {\em Fundamentals of {D}iophantine geometry}.
\newblock Springer-Verlag, New York, 1983.

\bibitem[Lan87]{LangEllFun}
Serge Lang.
\newblock {\em Elliptic functions}, volume 112 of {\em Graduate Texts in
  Mathematics}.
\newblock Springer-Verlag, New York, second edition, 1987.
\newblock With an appendix by J. Tate.

\bibitem[MB85]{MB}
Laurent Moret-Bailly.
\newblock Pinceaux de vari\'{e}t\'{e}s ab\'{e}liennes.
\newblock {\em Ast\'{e}risque}, (129):266, 1985.

\bibitem[Mum08]{MumfordAV}
David Mumford.
\newblock {\em Abelian varieties}, volume~5 of {\em Tata Institute of
  Fundamental Research Studies in Mathematics}.
\newblock 2008.
\newblock Corrected reprint of the second (1974) edition.

\bibitem[MW89]{MasserW3}
David~W. Masser and Gisbert W\"{u}stholz.
\newblock Some effective estimates for elliptic curves.
\newblock In {\em Arithmetic of complex manifolds ({E}rlangen, 1988)}, volume
  1399 of {\em Lecture Notes in Math.}, pages 103--109. Springer, Berlin, 1989.

\bibitem[MW90]{MasserW1}
David~W. Masser and Gisbert W\"{u}stholz.
\newblock Estimating isogenies on elliptic curves.
\newblock {\em Invent. Math.}, 100(1):1--24, 1990.

\bibitem[MW93]{MasserW2}
David Masser and Gisbert W\"{u}stholz.
\newblock Isogeny estimates for abelian varieties, and finiteness theorems.
\newblock {\em Ann. of Math. (2)}, 137(3):459--472, 1993.

\bibitem[Par70]{Parshin70}
Aleksei~N. Par\v{s}in.
\newblock Isogenies and torsion of elliptic curves.
\newblock {\em Izv. Akad. Nauk SSSR Ser. Mat.}, 34:409--424, 1970.

\bibitem[Par73]{Parshin71}
Aleksei~N. Par\v{s}in.
\newblock Modular correspondences, heights and isogenies of abelian varieties.
\newblock {\em Trudy Mat. Inst. Steklov.}, 132:211--236, 266, 1973.

\bibitem[Paz19]{Paz18}
Fabien Pazuki.
\newblock Modular invariants and isogenies.
\newblock {\em Int. J. Number Theory}, 15(3):569--584, 2019.

\bibitem[Pel01]{Pellarin}
Federico Pellarin.
\newblock Sur une majoration explicite pour un degr\'{e} d'isog\'{e}nie liant
  deux courbes elliptiques.
\newblock {\em Acta Arith.}, 100(3):203--243, 2001.

\bibitem[PS00]{PesentiSzpiro}
J\'er\^ome Pesenti and Lucien Szpiro.
\newblock In\'{e}galit\'{e} du discriminant pour les pinceaux elliptiques \`a
  r\'{e}ductions quelconques.
\newblock {\em Compositio Math.}, 120(1):83--117, 2000.

\bibitem[Ray85]{Raynaud}
Michel Raynaud.
\newblock Hauteurs et isog\'{e}nies.
\newblock Number 127, pages 199--234. 1985.
\newblock Seminar on arithmetic bundles: the Mordell conjecture (Paris,
  1983/84).

\bibitem[Ros02]{Rosen}
Michael Rosen.
\newblock {\em Number theory in function fields}, volume 210 of {\em Graduate
  Texts in Mathematics}.
\newblock Springer-Verlag, New York, 2002.

\bibitem[Shi94]{Shimura}
Goro Shimura.
\newblock {\em Introduction to the arithmetic theory of automorphic functions},
  volume~11 of {\em Publications of the Mathematical Society of Japan}.
\newblock Princeton University Press, Princeton, NJ, 1994.

\bibitem[Sil86]{Silverman_HEC}
Joseph~H. Silverman.
\newblock Heights and elliptic curves.
\newblock In {\em Arithmetic geometry ({S}torrs, {C}onn., 1984)}, pages
  253--265. Springer, New York, 1986.

\bibitem[Sil94]{ATAEC}
Joseph~H. Silverman.
\newblock {\em Advanced topics in the arithmetic of elliptic curves}, volume
  151 of {\em Graduate Texts in Mathematics}.
\newblock Springer-Verlag, New York, 1994.

\bibitem[Sil09]{AEC}
Joseph~H. Silverman.
\newblock {\em The arithmetic of elliptic curves}, volume 106 of {\em Graduate
  Texts in Mathematics}.
\newblock Springer, Dordrecht, 2nd edition, 2009.

\bibitem[ST68]{SerreTate}
Jean-Pierre Serre and John Tate.
\newblock Good reduction of abelian varieties.
\newblock {\em Ann. of Math. (2)}, 88:492--517, 1968.

\bibitem[SU99]{SzpiroUllmo}
Lucien Szpiro and Emmanuel Ullmo.
\newblock Variation de la hauteur de {F}altings dans une classe de
  {$\overline{\bf Q}$}-isog\'{e}nie de courbe elliptique.
\newblock {\em Duke Math. J.}, 97(1):81--97, 1999.

\bibitem[Szp90]{Szpiro90}
Lucien Szpiro.
\newblock Discriminant et conducteur des courbes elliptiques.
\newblock {\em Ast{\'e}risque}, (183):7--18, 1990.
\newblock S{{\'e}}minaire sur les Pinceaux de Courbes Elliptiques (Paris,
  1988).

\bibitem[Tat93]{Tate93}
John Tate.
\newblock {A} review of non-{A}rchimedean elliptic functions.
\newblock In Int. Press, editor, {\em {E}lliptic curves, modular forms, and
  {F}ermat's last theorem (Hong Kong, 1993)}, pages 162--184, 1993.

\bibitem[Ulm11]{UlmerParkCity}
Douglas Ulmer.
\newblock Elliptic curves over function fields.
\newblock In {\em Arithmetic of {$L$}-functions}, volume~18 of {\em IAS/Park
  City Math. Ser.}, pages 211--280. Amer. Math. Soc., Providence, RI, 2011.

\end{thebibliography}

%%%%%%%%%%%%%%%%%%%%%%%%%%%%%%%%%%%%%%%%%%%%%%%%%%%%%

\normalsize\vfill
\noindent\rule{7cm}{0.5pt}

\smallskip
\noindent
{Richard Griffon} {(\it \href{richard.griffon@unibas.ch}{richard.griffon@unibas.ch})}  --
{\sc Departement Mathematik, Universit\"at Basel,} 
Spiegelgasse 1, 4051 Basel, Switzerland. 

\medskip
\noindent
{Fabien Pazuki} {(\it \href{fpazuki@math.ku.dk}{fpazuki@math.ku.dk})} --
{\sc Department of Mathematical Sciences, University of Copenhagen,}
Universitetsparken 5, 2100 Copenhagen \o{}, Denmark.

\end{document}